\documentclass[12pt, a4paper]{amsart}

\usepackage[hmargin=30mm, vmargin=25mm, includefoot, twoside]{geometry}
\usepackage[bookmarksopen=true]{hyperref}

\usepackage{amscd}
\usepackage{amsfonts,amssymb,verbatim}
\usepackage{latexsym}
\usepackage{mathrsfs}
\usepackage{stmaryrd}
\usepackage{xspace}
\usepackage{enumerate, paralist}
\usepackage{graphicx}
\usepackage[all]{xy}
\usepackage{extarrows}

\usepackage[usenames,dvipsnames]{color}

\usepackage{txfonts, pxfonts}

\usepackage{amsthm}
\usepackage{amsmath}

\usepackage{todonotes}
\setlength{\marginparwidth}{2.5 cm}

\numberwithin{equation}{section}

\newtheorem{thm}{Theorem}[section]
 \newtheorem{cor}[thm]{Corollary}
 \newtheorem{lem}[thm]{Lemma}
 \newtheorem{prop}[thm]{Proposition}

\newtheorem{alphthm}{Theorem}			

\newtheorem{alphprop}[alphthm]{Proposition}

 \theoremstyle{definition}
  \newtheorem{defn}[thm]{Definition}
  
  \newtheorem{question}[thm]{Question}

\newtheorem{alphdefn}[alphthm]{Definition}			

\newtheorem{alphques}[alphthm]{Question}			

 \theoremstyle{remark}
 \newtheorem{rem}[thm]{Remark}
  \newtheorem{ex}[thm]{Example}

\newtheorem*{claim*}{Claim}

\def\NN{\mathbb{N}}
\def\RR{\mathbb{R}}
\def\CC{\mathbb{C}}

\def\H{\mathcal{H}}
\def\Nd{\mathcal{N}}
\def\S{\mathcal{S}}
\def\G{\mathcal{G}}

\def\P{\mathcal{P}}
\def\F{\mathcal{F}}
\def\M{\mathcal{M}}

\def\E{\mathscr{E}}
\def\L{\mathbf{L}}
\def\I{\mathscr{I}}

\def\B{\mathfrak{B}}
\def\K{\mathfrak{K}}

\def\supp{\mathrm{supp}}
\def\ppg{\mathrm{prop}}
\def\Id{\mathrm{Id}}

\def\max{\mathrm{max}}

\def\r{\mathrm{r}}
\def\s{\mathrm{s}}

\def\Gr{\mathrm{Gr}}
\def\SOT{\mathrm{SOT}}

\begin{document}

\title{Rigidity for geometric ideals in uniform Roe algebras}

\author{Baojie Jiang and Jiawen Zhang}

\address[Baojie Jiang]{School of Mathematical Sciences, Chongqing Normal University, Chongqing 401331, China.}
\email{jiangbaojie@cqnu.edu.cn}

\address[Jiawen Zhang]{School of Mathematical Sciences, Fudan University, 220 Handan Road, Shanghai, 200433, China.}
\email{jiawenzhang@fudan.edu.cn}

\date{}


\begin{abstract}
In this paper, we investigate the rigidity problems for geometric ideals in uniform Roe algebras associated to discrete metric spaces of bounded geometry. These ideals were introduced by Chen and Wang, and can be fully characterised in terms of ideals in the associated coarse structures. Our main result is that if two geometric ideals in uniform Roe algebras are stably isomorphic, then the coarse spaces associated to these ideals are coarsely equivalent. We also discuss the case of ghostly ideals and pose some open questions.
\end{abstract}

\date{\today}
\maketitle

\parskip 4pt

\noindent\textit{Mathematics Subject Classification} (2020): 47L20, 46L80, 51F30, 47L40.\\
\textit{Keywords: Uniform Roe algebras, Geometric ideals, Rigidity, Coarse equivalences}

\section{Introduction}\label{sec:intro}

Roe algebras are $C^*$-algebras associated to metric spaces, which encode the coarse geometry of the underlying spaces. They were introduced by Roe in his pioneering work \cite{Roe88} to study higher indices of differential operators on open manifolds. There is also a uniform version of the Roe algebra, which has found applications in index theory (\emph{e.g.},\cite{Spa09}), $C^*$-algebra theory (\emph{e.g.}, \cite{Oza00}), single operator theory (\emph{e.g.}, \cite{SW17}) and even mathematical physics (\emph{e.g.}, \cite{EM19}).

To provide a formal definition, consider a discrete metric space $(X,d)$ of bounded geometry (see Section \ref{ssec:notions from coarse geometry}). Thinking of an operator $T$ on $\ell^2(X)$ as an $X$-by-$X$ matrix $(T(x,y))_{x,y \in X}$, we say that $T$ has \emph{finite propagation} if $\sup\{d(x,y) ~|~ T(x,y) \neq 0\} < \infty$. The set of all finite propagation operators forms a $\ast$-subalgebra of $\B(\ell^2(X))$, and its norm closure is called the \emph{uniform Roe algebra of $X$} and denoted by $C^*_u(X)$.

Uniform Roe algebras have nice behaviour in coarse geometry. More precisely, recall that two metric spaces $(X,d_X)$ and $(Y,d_Y)$ are \emph{coarsely equivalent} if there exist a map $f: X \to Y$ and functions $\rho_+, \rho_-: [0,\infty) \to \RR$ with $\lim\limits_{t\to +\infty} \rho_{\pm}(t) = +\infty$ such that for any $x_1,x_2 \in X$, we have
\[
\rho_-(d_X(x_1, x_2)) \leq d_Y(f(x_1), f(x_2)) \leq \rho_+(d_X(x_1, x_2)),
\]
and there exists $R>0$ such that the $R$-neighbourhood of $f(X)$ equals $Y$. It is known (see, \emph{e.g.}, \cite[Theorem 4]{BNW07}) that if $(X,d_X)$ and $(Y,d_Y)$ are coarsely equivalent, then their uniform Roe algebras are \emph{stably isomorphic} in the sense that $C^*_u(X) \otimes \K(\H) \cong C^*_u(Y) \otimes \K(\H)$ (where $\K(\H)$ is the $C^*$-algebra of compact operators on a separable infinite dimensional Hilbert space $\H$).

Conversely, the rigidity problem concerns whether the coarse geometry of a metric space can be fully determined by the associated uniform Roe algebra, \emph{i.e.}, whether two metric spaces are coarsely equivalent if their uniform Roe algebras are (stably) isomorphic. This problem was initially studied by \v{S}pakula and Willett in \cite{SW13}, followed by a series of works in the last decade (\emph{e.g.}, \cite{BCL20, BF21, LSZ20}), and recently is completely solved by the profound work \cite{BBFKVW22}.

Due to the importance of uniform Roe algebras, Chen and Wang initiated the study of their ideal structures (\cite{CW04, CW05, Wan07}). They managed to obtain a full description for the ideal structure of the uniform Roe algebra when the underlying space has Yu's Property A (from \cite{Yu00}). More precisely, they introduced a notion of geometric ideal and provided a detailed picture for these ideals.
Let us recall the definition:

\begin{alphdefn}[\cite{CW04, Wan07}]\label{introdefn:geom ideal}
An ideal $I$ in the uniform Roe algebra $C^*_u(X)$ of a discrete metric space $(X,d)$ of bounded geometry is called \emph{geometric} if the set of finite propagation operators in $I$ is dense in $I$.
\end{alphdefn}

To state the characterisations for geometric ideals in \cite{CW04}, it is convenient to consult the notion of coarse spaces (see, \emph{e.g.}, \cite{Roe93, Roe03}). Recall that a \emph{coarse structure} on a set $X$ is a collection $\E$ of subsets of $X \times X$ which is closed under the formation of subsets, inverses, products and finite unions (see Definition \ref{defn:coarse structure}). For $(X,\E)$, we can also define the uniform Roe algebra $C^*_u(X,\E)$ similarly as in the metric space case while replacing finite propagation operators with those whose supports belong to $\E$ (see Definition \ref{defn: unif. Roe alg.}). In the case of a metric space $(X,d)$, there is an associated coarse structure $\E_d$ which is the smallest one containing the sets $E_R:=\{(x,y) ~|~ d(x,y) \leq R\}$ for all $R\geq  0$, and clearly we have $C^*_u(X,\E_d)= C^*_u(X)$.

In \cite{CW04}, the authors discovered that geometric ideals in $C^*_u(X)$ for a metric space $(X,d)$ can be described in terms of ideals in the coarse structure $\E_d$. Recall that an \emph{ideal in $\E_d$} is a coarse structure $\I \subseteq \E_d$ on $X$ which is closed under products by elements in $\E_d$ (see Definition \ref{defn:ideals in coarse structure}). The geometric ideal in $C^*_u(X)$ associated to $\I$ is $C^*_u(X,\I)$, and this procedure provides an isomorphism between the lattice of geometric ideals in $C^*_u(X)$ and the lattice of ideals in $\E_d$ (\cite[Theorem 6.3]{CW04}). For convenience, we denote $\I(I)$ the ideal in $\E_d$ associated to a geometric ideal $I$ in $C^*_u(X)$. Moreover, \cite[Theorem 6.3]{CW04} also shows that ideals in $\E_d$ can be described using ideals in $(X,d)$ (see Definition \ref{defn:ideals in space}). The ideal in $(X,d)$ associated to an ideal $\I$ in $\E_d$ is $\L(\I):=\{\r(E) ~|~ E \in \I\}$, where $\r: X \times X \to X$ is the projection onto the first coordinate.

The main focus of this paper is to study the rigidity problem for geometric ideals in uniform Roe algebras of metric spaces. More precisely, we ask the following:

\begin{alphques}[Rigidity for geometric ideals]\label{introques}
Let $(X,d_X), (Y,d_Y)$ be discrete metric spaces of bounded geometry, and $I_X, I_Y$ be geometric ideals in the uniform Roe algebras $C^*_u(X)$ and $C^*_u(Y)$, respectively. If $I_X$ and $I_Y$ are (stably) isomorphic, do $(X,\I(I_X))$ and $(Y,\I(I_Y))$ have the same structure?
\end{alphques}

Our first task is to make the phrase \emph{``have the same structure''} in a more precise way. Readers might wonder whether it is possible to use a similar notion of coarse equivalence as in the metric space case recalled above. This works well if the coarse structures contain the diagonals (see, \emph{e.g.}, Definition \ref{defn:coarse equivalence} and the paragraph thereafter). (Note that in \cite[Definition 2.3]{Roe03}, one requires that a coarse structure always contains the diagonal. While in \cite{CW04,STY02}, it is necessary to consider the more general case.) However in the general case, there would be some issue if we only consider a single map (as in Definition \ref{defn:coarse equivalence}) due to the lack of units in geometric ideals. More precisely, note that a nontrivial geometric ideal $I$ in $C^*_u(X)$ does not bare a unit, and hence the associated ideal $\I(I)$ does not contain the diagonal.


To overcome this issue, we need to consult the notion of coarse equivalence for general coarse spaces introduced by Skandalis, Tu and Yu:

\begin{alphdefn}[{\cite[Definition 2.2]{STY02}}]\label{introdefn:ce}
Let $(X, \E_X)$ and $(Y, \E_Y)$ be coarse spaces. A \emph{coarse correspondence from $(X, \E_X)$ to $(Y, \E_Y)$} is a coarse structure $\E$ on $X \sqcup Y$ which restricts to $\E_Y$ on $Y$, contains $\E_X$, and is generated by the elements contained in $Y \times (X \sqcup Y)$. A \emph{coarse equivalence} between $(X, \E_X)$ and $(Y, \E_Y)$ is a coarse structure on $X \sqcup Y$ which is a coarse correspondence from $X$ to $Y$ and from $Y$ to $X$.
\end{alphdefn}

As noted in \cite[Proposition 2.3]{STY02} (see also Proposition \ref{prop:coarse corr unital case} and Corollary \ref{cor:coarse equiv unital case}), Definition \ref{introdefn:ce} coincides with the notion of coarse equivalence recalled above in the metric space case. However, it seems inconvenient to use directly the language of coarse correspondence to treat the rigidity problem. Hence we unpack Definition \ref{introdefn:ce} by means of families of maps and prove the following:

\begin{alphprop}[Corollary \ref{cor:coarse equiv general case}]\label{introprop:char for ce}
Two coarse spaces $(X, \E_X)$ and $(Y, \E_Y)$ are coarsely equivalent \emph{if and only if} there exist coarse families (see Definition \ref{defn:coarse family}) of maps
\[
\F=\{f_L:L \to Y ~|~ L \in \L(\E_X)\} \quad \text{and} \quad \G=\{g_{L'}: L' \to X ~|~ L' \in \L(\E_Y)\}
\]
such that $\{(g_{f_L(L)} \circ f_L(x), x) ~|~ x\in L\} \in \E_X$ and $\{(f_{g_{L'}(L')} \circ g_{L'}(y), y) ~|~ y\in L'\} \in \E_Y$ for any $L \in \L(\E_X)$ and $L' \in \L(\E_Y)$.
\end{alphprop}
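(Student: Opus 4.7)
The plan is to derive this as a corollary of the underlying characterisation of a single coarse correspondence (presumably established just before as a proposition), applied symmetrically in both directions. A coarse equivalence is, by Definition \ref{introdefn:ce}, a coarse structure $\E$ on $X \sqcup Y$ which is simultaneously a coarse correspondence from $X$ to $Y$ and from $Y$ to $X$; so I would extract $\F$ from the $X$-to-$Y$ direction and $\G$ from the $Y$-to-$X$ direction, and then show that the mutual round-trip closeness condition $\{(g_{f_L(L)} \circ f_L(x), x) \mid x\in L\} \in \E_X$ is precisely what captures the non-trivial interaction between the two one-sided correspondences.

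For the forward direction, suppose $\E$ is a coarse equivalence. Since $\E$ is a coarse correspondence from $X$ to $Y$, it is generated by subsets lying in $Y \times (X \sqcup Y)$. For each $L \in \L(\E_X)$, the diagonal $\Delta_L = \{(x,x) : x \in L\}$ belongs to $\E$ (as a subset of an element of $\E_X$), so it can be dominated by a product of generators; tracing through one such factorisation produces an $E_L \in \E \cap (Y \times L)$ whose second projection equals $L$. Using the axiom of choice, I pick $f_L(x) \in Y$ with $(f_L(x), x) \in E_L$ for each $x \in L$; the set $\gr(f_L)^{-1} := \{(f_L(x), x) : x \in L\}$ then lies in $\E$, and since $E_L$ is in $\E$, its first projection $f_L(L)$ lies in $\L(\E_Y)$, making the composition $g_{f_L(L)} \circ f_L$ well-defined. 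The required closeness of $g_{f_L(L)} \circ f_L$ to the identity on $L$ then follows by composing the two graphs inside $\E$ and using that $\E$ restricts to $\E_X$ on $X$. Verifying that $\F$ (and symmetrically $\G$) is a coarse family in the sense of Definition \ref{defn:coarse family} reduces to checking that different choices differ by an $\E_Y$-controlled set, which is immediate from the fact that $E_L \circ E_L^{-1} \subseteq \E_Y$.

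For the reverse direction, given $\F$ and $\G$ satisfying the hypothesis, I would define $\E$ as the coarse structure on $X \sqcup Y$ generated by $\E_X$, $\E_Y$, and all the graphs $\gr(f_L)^{-1} \subseteq Y \times X$ and $\gr(g_{L'})^{-1} \subseteq X \times Y$. Every generator then lies in $Y \times (X \sqcup Y)$ once one uses the graphs of the $g_{L'}$ to rewrite the $X \times Y$ pieces as inverses of elements of $Y \times X$; symmetrically the generating set may be rewritten so as to lie in $X \times (X \sqcup Y)$. The delicate point is to verify that $\E$ restricts to $\E_X$ on $X$ and to $\E_Y$ on $Y$: products such as $\gr(g_{L'})^{-1} \cdot \gr(f_L)^{-1}$, which travel $X \to Y \to X$, are controlled by $\E_X$ precisely by the round-trip hypothesis, and the coarse-family compatibility of $\F$ and $\G$ handles mixed products of longer length by induction.

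The main obstacle will be the forward direction, and specifically organising the pointwise choices $f_L(x)$ into a genuine coarse family rather than an incoherent collection of maps indexed by $\L(\E_X)$. In particular, establishing compatibility across the family (restriction, behaviour under enlarging $L$, and the property $f_L(L) \in \L(\E_Y)$ needed to even state the round-trip condition) will require carefully exploiting the hypothesis that $\E$ is generated by elements of $Y \times (X \sqcup Y)$ so that the relevant images always arise as projections of elements of $\E_Y$. Once the one-sided statement characterising coarse correspondences by coarse families of maps is cleanly in place, the present corollary follows by applying it twice and checking that the two resulting round-trip conditions match those stated above.
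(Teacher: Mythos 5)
Your plan coincides with the paper's: it first establishes the one-sided characterisation of coarse correspondences by coarse families (Proposition \ref{thm:coarse corr general case}), extracting each $f_L$ from an entourage $E_L \in \E|_{Y \times X}$ with $\Delta_L \subseteq E_L^{-1}\circ E_L$ supplied by the generation axiom (Lemma \ref{lem:elementary facts for coarse corr}) and verifying admissibility, coarseness and the round-trip conditions by composing graphs inside $\E$ and restricting to $X\times X$ or $Y \times Y$, and then applies this symmetrically in both directions (Corollaries \ref{cor:coarse equiv general case 1} and \ref{cor:coarse equiv general case 2}); your converse, which generates a coarse structure from $\E_X$, $\E_Y$ and the graphs, is the same construction the paper packages explicitly as $\E(\F)=\E(\G)$. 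The one point to tighten is the generation axiom in the converse: the $\E_X$-generators lie in $X \times X$ and are neither contained in $Y \times (X \sqcup Y)$ nor inverses of such sets, so one needs the factorisation $E \subseteq F^{-1}\circ F$ with $F=(f_L \times \Id_X)\bigl(E \cup (E \circ E^{-1})\bigr)$ (Lemmas \ref{lem:easy containment} and \ref{lem:facts for E(F)}), not merely the rewriting of the $X \times Y$ pieces via the graphs of the $g_{L'}$.
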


Having established Proposition \ref{introprop:char for ce}, we manage to answer Question \ref{introques} completely. The following is the main result of this paper (see Section \ref{sec:morita for geometric} and Section \ref{sec:stable} for the missing definitions):

\begin{alphthm}\label{introthm}
Let $(X,d_X)$ and $(Y,d_Y)$ be discrete metric spaces of bounded geometry, and $I_X, I_Y$ be geometric ideals in the uniform Roe algebras $C^*_u(X)$ and $C^*_u(Y)$, respectively. Consider the following conditions:
\begin{enumerate}
 \item $I_X$ and $I_Y$ are stably isomorphic;
 \item $(X,\I(I_X))$ and $(Y, \I(I_Y))$ are coarsely equivalent;
 \item $I_X$ and $I_Y$ are Morita equivalent.
\end{enumerate}
Then we have (1) $\Rightarrow$ (2) $\Rightarrow$ (3). Additionally if the associated ideals $\L(\I(I_X))$ and $\L(\I(I_Y))$ are countably generated, then the three conditions above are all equivalent.
\end{alphthm}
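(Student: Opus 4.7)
The plan is to establish the three implications separately. Implications (2)$\Rightarrow$(3) and (3)$\Rightarrow$(1) (the latter under the countable generation hypothesis) should be structural in nature, while (1)$\Rightarrow$(2) carries the genuine rigidity content and is the main obstacle.

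For (2)$\Rightarrow$(3), I would use the coarse families $\F=\{f_L\}$ and $\G=\{g_{L'}\}$ supplied by Proposition \ref{introprop:char for ce} to construct an explicit $I_X$-$I_Y$ imprimitivity bimodule. Each map $f_L:L\to Y$ determines a partial isometry $V_L:\ell^2(L)\to\ell^2(f_L(L))$ by $\delta_x\mapsto\delta_{f_L(x)}$; the compression $T\mapsto V_L T V_L^*$ sends elements of $I_X$ supported in $L\times L$ into $I_Y$, and varying $L$ across $\L(\I(I_X))$ assembles these compressions into a pre-bimodule. The composition conditions of Proposition \ref{introprop:char for ce}, namely that $g_{f_L(L)}\circ f_L$ and $f_{g_{L'}(L')}\circ g_{L'}$ are close to the identity modulo $\E_X$ and $\E_Y$ respectively, guarantee that the analogous construction from $\G$ provides a two-sided inverse, yielding the desired Morita equivalence.

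For (3)$\Rightarrow$(1) under countable generation, a countable cofinal family $L_1\subseteq L_2\subseteq\cdots$ in $\L(\I(I_X))$ yields characteristic projections $P_{L_n}\in I_X$ (which lie in $I_X$ because $\I(I_X)$ contains the diagonal of each $L_n$) that form a countable approximate unit, so $I_X$ is $\sigma$-unital; the same applies to $I_Y$. The Brown-Green-Rieffel theorem then upgrades Morita equivalence of $\sigma$-unital $C^*$-algebras to stable isomorphism.

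For (1)$\Rightarrow$(2), the main work, I would adapt the rigidity machinery of \cite{SW13, BBFKVW22} to the non-unital setting of geometric ideals. Given a stable isomorphism $\Phi:I_X\otimes\K(\H)\to I_Y\otimes\K(\H)$ and a fixed minimal projection $p\in\K(\H)$, for each $L\in\L(\I(I_X))$ one restricts $\Phi$ to the corner cut out by $P_L\otimes 1$ and, for each $x\in L$, analyses the image $\Phi(e_{x,x}\otimes p)$ via Hilbert--Schmidt approximation, spectral projection and rank comparisons. This should produce, up to a small error, a rank-one projection concentrated at a single point $f_L(x)\in\L(\I(I_Y))$, yielding a map $f_L:L\to Y$; the symmetric construction applied to $\Phi^{-1}$ produces $\G=\{g_{L'}\}$. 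The main technical hurdle is to obtain the rigidity estimates uniformly as $L$ varies, verify that the resulting collections $\F$ and $\G$ are coarse families in the sense of Definition \ref{defn:coarse family}, and check that they satisfy the composition identities of Proposition \ref{introprop:char for ce}. Because $I_X$ lacks a unit, the global compactness arguments of \cite{BBFKVW22} are not directly available, and I expect the decisive step to be a directed-limit version of their propagation-reduction techniques, uniform across the system $\L(\I(I_X))$.
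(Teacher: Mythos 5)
Your outline of (3)$\Rightarrow$(1) matches the paper (countably generated $\L(\I(I_X))$ gives a countable cofinal chain, hence the projections $\chi_{L_n}$ form a countable approximate unit, and Brown--Green--Rieffel applies), and your (2)$\Rightarrow$(3) is in the right spirit, though as written it has a flaw: the maps $f_L$ need not be injective, so $\delta_x\mapsto\delta_{f_L(x)}$ does not define a partial isometry, and a family of compressions $T\mapsto V_LTV_L^*$ is a family of cp maps, not a bimodule. The clean construction (and the one the paper uses) is to pass to the coarse correspondence $\E$ on $X\sqcup Y$ and take $\M$ to be the closure of the operators in $\CC_u[X\sqcup Y,\E]$ supported in $X\times Y$, with $\langle T,S\rangle=T^*S$ and $TS^*$ as inner products; fullness is then obtained by decomposing graphs of the maps into finitely many injective pieces.

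The genuine gap is in (1)$\Rightarrow$(2), which is the heart of the theorem and which your proposal leaves as a plan. Two points. First, the difficulty you flag --- obtaining the rigidity estimates ``uniformly as $L$ varies'' --- is not actually needed: constants $\delta_L$ depending on $L$ suffice, because coarseness of the family $\F$ is checked entourage by entourage and every $E\in\I(I_X)$ sits inside some $L\times L$. Second, and decisively, your sketch produces candidate maps $f_L$ but never addresses why the resulting family is \emph{coarse with respect to the ideal structures}, i.e.\ why sets of the form $\{(y_1,y_2):\exists\,(x_1,x_2)\in E$ with $|\langle U(\delta_{x_i}\otimes v_i),\delta_{y_i}\otimes w_i\rangle|\geq\delta\}$ lie in $\I(I_Y)$ rather than merely in $\E_{d_Y}$, and why the composition identities of Proposition~\ref{introprop:char for ce} hold. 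This is exactly where non-unitality bites: the coarse structures $\I(I_X)$, $\I(I_Y)$ need not be small in the sense of \cite[Definition 4.2]{BF21}, so one cannot quote the unital/metric machinery directly, and a ``directed-limit version of propagation reduction'' is not a proof. The paper's resolution is concrete: the approximate unit $\{\chi_L\}_{L\in\L}$ of Lemma~\ref{lem:app unit} (and its stable analogue $\{\chi_{L'}\otimes q\}$) is used both to produce the maps $f_L$ (by cutting with near-units $\chi_{L'}\otimes q$ on the target side before running the concentration argument of \cite{BBFKVW22}) and, crucially, in the controlled-set lemmas (Lemmas~\ref{lem:ctrl sets} and \ref{lem:stable ctrl sets}): there, a contradiction argument combining the equi-approximability lemma with the cofinal-subfamily lemma of \cite[Lemma 4.10]{BF21} is made to close precisely because one can replace metric $r$-separation in $Y$ by separation with respect to the entourage $F_1=\{(y_1,y_2)\in L'\times L' : d_Y(y_1,y_2)<r\}\in\I(I_Y)$, after approximating $\Phi(e_{x_1x_2})$ by $\chi_{L'}\Phi(e_{x_1x_2})\chi_{L'}$. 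Without this mechanism (or a substitute for it), your proposal does not yield that $\F$ and $\G$ are coarse families, so the implication (1)$\Rightarrow$(2) remains unproved.
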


Note that ``(2) $\Rightarrow$ (3)'' is known to experts, and the key step is to prove ``(1) $\Rightarrow$ (2)''. The strategy  follows from that for \cite[Theorem 1.4]{BBFKVW22}, which is the standard approach originated in \cite{SW13}. However, there are technical issues due to the lack of units in geometric ideals. To overcome this, we discover concrete approximate units in geometric ideals (see Lemma \ref{lem:app unit}), which have close relations to the coarse geometry of the underlying spaces and the associated ideals. Hence we can approximate a geometric ideal by a net of uniform Roe algebras of subspaces, and therefore manage to consult the proof in the unital case.

At this point, we would also like to highlight that Theorem \ref{introthm} is \emph{not} just an easy combination of results in \cite[Section 4]{BF21} and \cite{BBFKVW22}. Recall that in \cite{BF21}, the rigidity problem was studied for general coarse spaces. Note that there is an extra hypothesis that coarse spaces are \emph{small} in the sense of \cite[Definition 4.2]{BF21} to prove that rigid isomorphisms induce coarse equivalences. However in the current setting, coarse structures associated to geometric ideals might not be small in general. This obstructs us from directly using the techniques from \cite[Section 4]{BF21}. To overcome the issue, again we make use of the concrete approximate units to reduce to the unital case.

Also note that in a recent work \cite{WZ23}, Wang and the second-named author introduced a notion of ghostly ideals and studied the ideal structure of uniform Roe algebras beyond the scope of Property A. We provide some discussions on the rigidity problems for ghostly ideals and pose some open questions.

The paper is organised as follows. In Section \ref{sec:pre}, we recall necessary background knowledge in coarse geometry and uniform Roe algebras. In Section \ref{sec:coarse corr}, we study the notion of coarse equivalence for general coarse spaces (Definition \ref{introdefn:ce}) and prove Proposition \ref{introprop:char for ce}. Section \ref{sec:morita for geometric} to Section \ref{sec:stable} are devoted to the proof for Theorem \ref{introthm}. We record the proof for the easy direction ``(2) $\Rightarrow$ (3)'' in Section \ref{sec:morita for geometric} and prove ``(1) $\Rightarrow$ (2)'' in Section \ref{sec:stable}. To make the argument in Section \ref{sec:stable} more transparent, we prove a weak version of ``(1) $\Rightarrow$ (2)'' in Section \ref{sec:rigidity for geometric ideals}. Finally, we provide some discussion on ghostly ideals in Section \ref{sec:ghostly ideals}.

\subsection*{Acknowledgement} This project began during BJ's visit to the School of Mathematical Sciences at Fudan University. BJ would like to express his gratitude to Prof. Xiaoman Chen and Yijun Yao for the invitation and coordination, and also to the faculty members for their hospitality and engaging discussions during his stay. We would also like to thank Rufus Willett for his comments after reading an early draft of this paper.

BJ was supported by NSFC12001066 and NSFC12071183. JZ was partly supported by National Key R{\&}D Program of China 2022YFA100700.

\section{Preliminaries}\label{sec:pre}

\subsection{Standard notation} Here we collect the notation used throughout the paper.

For a set $X$, denote $|X|$ the cardinality of $X$ and $\P(X)$ the set of all subsets in $X$. For $A \subseteq X$, denote $\chi_{A}$ the characteristic function of $A$, and set $\delta_x:=\chi_{\{x\}}$ for $x\in X$.

For a discrete space $X$, denote $\ell^\infty(X)$ the $C^*$-algebra of bounded functions on $X$ with the supremum norm $\|f\|_\infty:=\sup_{x\in X}|f(x)|$. The \textit{support} of $f\in \ell^\infty(X)$ is defined to be $\{ x\in X ~|~ f(x)\neq 0\}$, denoted by $\supp(f)$. Given a Hilbert space $\H$, denote $\ell^2(X;\H)$ the Hilbert space of square-summable functions from $X$ to $\H$. When $\H = \CC$, we denote $\ell^2(X):=\ell^2(X;\CC)$, which has an orthonormal basis $\{\delta_x\}_{x\in X}$.

Given a Hilbert space $\H$, denote $\B(\H)$ the $C^*$-algebra of all bounded linear operators on $\H$, and $\K(\H)$ the $C^*$-subalgebra of all compact operators on $\H$.

\subsection{Notions from coarse geometry}\label{ssec:notions from coarse geometry}

Here we collect necessary notions from coarse geometry, and guide readers to \cite{NY12, Roe03} for more details.

First recall that for a set $X$ and $E,F \subseteq X \times X$, denote
\begin{align*}
E^{-1}&:=\{(y,x) ~|~ (x,y) \in E\},\\
E \circ F &:= \{(x,z) ~|~ \exists~ y\in X \text{ such that } (x,y) \in E \text{ and } (y,z) \in F\}.
\end{align*}
Also denote $\r,\s: X\times X \to X$ the projection onto the first and the second coordinate, respectively. Given $E \subseteq X \times X$ and $A,B \subseteq X$, we say that $A$ and $B$ are \emph{$E$-separated} if $(A \times B) \cap E = \emptyset$ and $(B \times A) \cap E = \emptyset$.

\begin{defn}\label{defn:coarse structure}
A \emph{(connected) coarse structure} on a set $X$ is a collection $\E \subseteq \P(X \times X)$, called \emph{entourages}, satisfying the following:
\begin{enumerate}
 \item[(1)] For any entourages $A$ and $B$, then $A^{-1}$, $A \circ B$ and $A \cup B$ are entourages;
 \item[(2)] Every finite subset of $X \times X$ is an entourage;
 \item[(3)] Any subset of an entourage is an entourage.
\end{enumerate}
In this case, $(X, \E)$ is called a \emph{coarse space}. If additionally the diagonal
\[
\Delta_X:=\{(x,x) ~|~ x\in X\}
\]
is an entourage, then $\E$ (also the pair $(X, \E)$) is called \emph{unital}.
\end{defn}

For $Y \subseteq X$ and $E \in \E$, denote the \emph{$E$-neighbourhood} $\Nd_E(Y)$ of $Y$ by
\[
\Nd_E(Y):=Y \cup \{x\in X ~|~\exists~y\in Y \text{ such that } (x,y) \in E\}
\]
and
\[
n(E) := \sup_{x\in X} \max\{~ |\Nd_E(\{x\})|, |\Nd_{E^{-1}}(\{x\})|~\}.
\]

\begin{defn}\label{defn:bdd geom for coarse structure}
A coarse structure $\E$ on a set $X$ is said to have \emph{bounded geometry} (or to be \emph{uniformly locally finite}) if $n(E)$ is finite for any entourage $E \in \E$. In this case, we also say that the coarse space $(X,\E)$ has bounded geometry.
\end{defn}

For a set $X$ and a collection $\S\subseteq \P(X \times X)$, the smallest coarse structure on $X$ containing $\S$ is called the coarse structure \emph{generated by $\S$}.

When $(X,d)$ is a discrete metric space, there is an associated unital coarse structure $\E_d$ (called the \emph{bounded coarse structure}) generated by all the \emph{$R$-entourages} defined as $E_R :=\{(x,y) \in X \times X ~|~ d(x,y) \leq R\}$ for all $R\geq 0$. In this case, we denote the closed ball by $B_{X}(x,r):=\Nd_{E_r}(\{x\})$ for $x\in X$ and $r\geq 0$, and $\Nd_r(Y):=\Nd_{E_r}(Y)$ for $Y\subseteq X$ and $r\geq 0$. We say that $(X,d)$ has \emph{bounded geometry} if $\E_d$ has bounded geometry, \emph{i.e.}, the number $\sup_{x\in X} |B_{X}(x,r)|$ is finite for any $r\geq 0$.

\begin{defn}\label{defn:coarse maps}
Let $X,Y$ be sets, $f: X \to Y$ be a map and $\E_Y$ be a coarse structure on $Y$.
\begin{enumerate}
 \item[(1)] Denote $f^*\E_Y:=\{E \subseteq X \times X ~|~ (f \times f)(E) \in \E_Y\}$, which is a coarse structure on $X$. Here $(f \times f)(E):=\{(f(x),f(y)) ~|~ (x,y) \in E\}$.
 \item[(2)] If $\E_X$ is a coarse structure on $X$, we say that $f$ is \emph{coarse} if $\E_X \subseteq f^*\E_Y$. To specify the underlying coarse structures, we also write $f: (X, \E_X) \to (Y, \E_Y)$.
 \item[(3)] Two coarse maps $f,g: (X, \E_X) \to (Y, \E_Y)$ are said to be \emph{close} if for any $E \in \E_X$, we have $(f \times g)(E):=\{(f(x),g(y)) ~|~ (x,y) \in E\} \in \E_Y$.
\end{enumerate}
\end{defn}

\begin{defn}\label{defn:coarse equivalence}
Let $f: (X,\E_X) \to (Y, \E_Y)$ be a map between unital coarse spaces. We say that $f$ is a \emph{coarse equivalence} if $f$ is coarse and there exists a coarse map $g: (Y, \E_Y) \to (X, \E_X)$ (called a \emph{coarse inverse} to $f$) such that $f\circ g$ is close to $\Id_Y$ and $g \circ f$ is close to $\Id_X$. In this case, we say that $(X,\E_X)$ and $(Y, \E_Y)$ are \emph{coarsely equivalent}.
\end{defn}


When $(X,\E_X)$ and $(Y, \E_Y)$ come from metric spaces, Definition \ref{defn:coarse equivalence} coincides with the one recalled in Section \ref{sec:intro}. Note that Definition \ref{defn:coarse equivalence} also makes sense for the general (non-unital) case, however, it does \emph{not} work well for the rigidity problems (see Proposition \ref{introprop:char for ce} and Theorem \ref{introthm}). It turns out that the suitable setting for the notion of coarse equivalence in the general case is Definition \ref{introdefn:ce}. To obtain an appropriate picture, we will focus on Definition \ref{introdefn:ce} in Section \ref{sec:coarse corr} based on an alternative version of coarse maps from \cite{STY02}.

\subsection{Uniform Roe algebras and geometric ideals}\label{ssec:uniform Roe alg}
Let $X$ be a set. Each operator $T \in \B(\ell^2(X))$ can be written in the matrix form $T=(T(x,y))_{x,y\in X}$, where $T(x,y)=\langle T \delta_y, \delta_x \rangle \in \CC$.
Denote by $\|T\|$ the operator norm of $T$ in $\B(\ell^2(X))$.
Similarly for an operator $T \in \B(\ell^2(X;\H))$, we can also write $T=(T(x,y))_{x,y\in X}$ for $T(x,y) \in \B(\H)$.

Given an operator $T \in \B(\ell^2(X))$, we define the \emph{support} of $T$ to be
\[
\supp(T):=\{(x,y) \in X \times X ~|~ T(x,y) \neq 0\}.
\]
Given $\varepsilon>0$, define the \emph{$\varepsilon$-support of $T$} to be
\[
\supp_\varepsilon(T):=\big\{(x,y)\in X \times X ~\big|~ |T(x,y)| \geq \varepsilon\big\}.
\]
When $X$ is equipped with a metric $d$, we define the \emph{propagation} of $T$ to be
\[
\ppg(T):= \sup\{d(x,y) ~|~ (x,y) \in \supp(T)\}.
\]

\begin{defn}\label{defn: unif. Roe alg.}
Let $(X,\E)$ be a coarse space of bounded geometry.
The set of all operators in $\B(\ell^2(X))$ whose supports belong to $\E$ forms a $\ast$-algebra, called the \emph{algebraic uniform Roe algebra of $(X, \E)$} and denoted by $\CC_u[X,\E]$. The \emph{uniform Roe algebra of $(X, \E)$} is defined to be the operator norm closure of $\CC_u[X,\E]$ in $\B(\ell^2(X))$, which forms a $C^*$-algebra and is denoted by $C^*_u(X, \E)$.

When $(X,d)$ is a discrete metric space of bounded geometry, we simply write $\CC_u[X]:=\CC_u[X,\E_d]$ and $C^*_u(X):=C^*_u(X,\E_d)$.
\end{defn}

For a coarse space $(X,\E)$ of bounded geometry and $x,y\in X$, denote the rank-one operator $\xi \mapsto \langle \xi, \delta_y \rangle \delta_x$ by $e_{xy}$. It is clear that $e_{xy} \in \CC_u[X,\E]$. This kind of operators will play an important role when we study the rigidity problems later.

In \cite{CW04, Wan07}, Chen and Wang introduced the notion of geometric ideals (see Definition \ref{introdefn:geom ideal}) in uniform Roe algebras and provided a full description for these ideals. Here we only focus on the case of metric spaces, which are the main objects we are interested in for the rigidity problem. (Note that geometric ideals for general coarse spaces were also studied in \cite{CW04}.)
In the following, we always assume that $(X,d)$ is a discrete metric space of bounded geometry and $\E_d$ is the associated bounded coarse structure.

\begin{defn}[{\cite[Definition 4.1]{CW04}}]\label{defn:ideals in coarse structure}
A coarse structure $\I \subseteq \E_d$ is called an \emph{ideal}
in $\E_d$ if for any $E \in \E_d$ and $A \in \I$, we have $E \circ A \in \I$ and $A \circ E \in \I$.
\end{defn}

\begin{prop}[{\cite[Proposition 4.2]{CW04}}]\label{prop:ideals in coarse strcutrue}
Given an ideal $\I$ in $\E_d$, the uniform Roe algebra $C^*_u(X,\I)$ is a geometric ideal in $C^*_u(X)$. Conversely, given a geometric ideal $I$ in $C^*_u(X)$, the collection $\I(I):=\{\supp_\varepsilon(T) ~|~ T \in I, \varepsilon>0\}$ is an ideal in $\E_d$. Moreover, we have $\I(C^*_u(X,\I)) = \I$ and $C^*_u(X,\I(I)) = I$.
\end{prop}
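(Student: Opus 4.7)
The plan is to prove the four assertions in sequence, with the technical core being an extension of the bounded-geometry ``diagonal restriction'' trick to geometric ideals. The first claim is straightforward: I would check that $\CC_u[X,\I]$ is a two-sided $*$-ideal inside $\CC_u[X]$ using the set-level identity $\supp(ST) \subseteq \supp(S) \circ \supp(T)$ together with $\E_d \circ \I \subseteq \I$ (and its mirror); passing to norm closures gives that $C^*_u(X,\I)$ is a closed two-sided ideal of $C^*_u(X)$, and geometricity is automatic since $\CC_u[X,\I]$ sits inside the finite-propagation operators (as every $E \in \I \subseteq \E_d$ lies in some $E_R$) and is norm-dense in $C^*_u(X,\I)$ by definition.

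The main technical lemma I would establish next is: for any geometric ideal $I$ and any $T \in I$, the diagonal $\mathrm{diag}(T)$ belongs to $I$. For finite-propagation $T$ with $\ppg(T) \leq R$, bounded geometry supplies a partition $X = \bigsqcup_{i=1}^{k} X_i$ into finitely many $R$-separated subsets, so $\mathrm{diag}(T) = \sum_i \chi_{X_i} T \chi_{X_i} \in I$; the general case follows by density of finite-propagation elements in $I$ (geometricity) and norm-contractivity of $\mathrm{diag}$ on $\B(\ell^2(X))$. From this I get the key restriction tool: for any partial bijection $\sigma$ with graph $F$ and associated partial isometry $V_F$, the element $V_F \cdot \mathrm{diag}(V_F^* T)$ lies in $I$, agrees with $T$ on $F \cap \supp(T)$, and vanishes elsewhere. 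Combined with the standard decomposition of any entourage in $\E_d$ into finitely many partial matchings (bounded geometry plus K\"onig), this lets me restrict any element of $I$ to any subset of its support while staying inside $I$.

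With this tool, the verification that $\I(I)$ is an ideal in $\E_d$ is mostly routine: closure under subsets follows by decomposing into partial matchings and restricting; closure under inverses is immediate from $\supp_\varepsilon(T)^{-1} = \supp_\varepsilon(T^*)$ and $T^* \in I$; closure under finite unions uses the disjoint-union reduction $F_1 \cup F_2 = F_1 \sqcup (F_2 \setminus F_1)$ followed by summing ideal elements with disjoint supports (no cancellation); and closure under composition and the ideal condition $\E_d \circ \I(I) \subseteq \I(I)$ follow by constructing $S_i \in I$ (respectively $Q \in \CC_u[X]$) with non-negative entries at least $1$ on the relevant entourages, so that the matrix product $S_1 S_2$ (respectively $QS$) has support exactly $F_1 \circ F_2$ (resp.\ $E \circ F$) with all nonzero entries bounded below by $1$. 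For the two identities, $\I \subseteq \I(C^*_u(X,\I))$ is witnessed by the characteristic-type operators supported on $E \in \I$ with entries $1$, and the reverse by $\varepsilon/2$-approximation of $T \in C^*_u(X,\I)$ inside $\CC_u[X,\I]$ (forcing $\supp_\varepsilon(T) \subseteq \supp(T')$ for the approximant). For $C^*_u(X,\I(I)) = I$, the inclusion $I \subseteq C^*_u(X,\I(I))$ uses entry-wise truncation of finite-propagation ideal elements (whose operator norm error is controlled via the partial-matching decomposition) together with geometricity; conversely, $T \in \CC_u[X,\I(I)]$ decomposes along partial matchings as $\sum_i V_{F_i} D_i$, and the restriction tool rewrites each summand as $S_i D_i'$ with $S_i \in I$ and $D_i' \in \ell^\infty(X) \subseteq C^*_u(X)$ an appropriate rescaling diagonal, placing $T \in I$. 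The main obstacle I expect is the careful bookkeeping around the diagonal lemma and the partial-isometry/diagonal-operator manipulations, both of which depend essentially on bounded geometry; once that infrastructure is in place, the rest is algebraic.
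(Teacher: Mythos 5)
The paper itself gives no proof of this proposition---it is quoted from Chen--Wang \cite[Proposition 4.2]{CW04}---and your argument is correct and follows essentially the standard route used there: the conditional expectation onto the diagonal obtained from an $R$-separated finite partition, Schur-type truncation of elements of $I$ along a finite decomposition of an entourage into partial matchings, and rescaling by bounded diagonal operators to pass between $I$ and $\CC_u[X,\I(I)]$. The only point worth adding is the coarse-structure axiom that every finite subset of $X \times X$ belongs to $\I(I)$, which holds once $I \neq 0$ since then $I$ contains all matrix units $e_{xy}$ (the degenerate case $I=\{0\}$ must be excluded by convention, as it is implicitly in the statement).
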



We also need the following notion of ideals in space:

\begin{defn}[{\cite[Definition 6.1]{CW04}}]\label{defn:ideals in space}
An \emph{ideal} in $(X,d)$ is a collection $\L\subseteq \P(X)$ satisfying the following:
\begin{enumerate}
 \item If $Y \in \L$ and $Z \subseteq Y$, then $Z \in \L$;
 \item If $Y \in \L$ and $r>0$, then $\Nd_r(Y) \in \L$;
 \item If $Y ,Z \in \L$, then $Y \cup Z \in \L$.
\end{enumerate}
For $\S\subseteq \P(X)$, we say that $\L$ is the \emph{ideal generated by $\S$} if $\L$ is the smallest ideal in $(X,d)$ containing $\S$.
\end{defn}


\begin{prop}[{\cite[Proposition 6.2]{CW04}}]\label{prop:ideals in space}
Given an ideal $\I$ in $\E_d$, the collection $\L(\I):=\{\r(E) ~|~ E \in \I\}$ is an ideal in $(X,d)$. Conversely, given an ideal $\L$ in $(X,d)$, the collection $\I(\L):=\{E \in \E_d ~|~\exists~L \in \L \text{ such that } E \subseteq L \times L\}$ is an ideal in $\E_d$. Moreover, we have $\I(\L(\I)) = \I$ and $\L(\I(\L)) = \L$.
\end{prop}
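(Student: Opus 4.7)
The plan is to split the proposition into four assertions and verify each in turn: (i) $\L(\I)$ is an ideal in $(X,d)$; (ii) $\I(\L)$ is an ideal in $\E_d$; (iii) $\I(\L(\I)) = \I$; and (iv) $\L(\I(\L)) = \L$. I expect (i), (ii) and (iv) to be routine unpacking of the definitions, while (iii) carries the only genuine content.

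For (i), I would verify the three axioms of Definition \ref{defn:ideals in space}: closure under subsets via the witness $Z = \r(E \cap (Z \times X))$; closure under $r$-neighbourhoods via the identity $\r(E_r \circ E) = \Nd_r(\r(E))$ together with $E_r \circ E \in \I$ from the ideal condition on $\I$; and closure under finite unions via $\r(E \cup F) = \r(E) \cup \r(F)$. For (ii), the coarse structure axioms follow from the observation that if $E \subseteq L_1 \times L_1$ and $F \subseteq L_2 \times L_2$ with $L_1, L_2 \in \L$, then $E^{-1}$, $E \cup F$, $E \circ F$ and every subset of $E$ all live in $(L_1 \cup L_2) \times (L_1 \cup L_2)$, and $L_1 \cup L_2 \in \L$ by axiom (3); the ideal property then uses axiom (2), since composing $E \in \I(\L)$ with any $G \in \E_d$ of propagation at most $r$ places the result inside $\Nd_r(L_1) \times \Nd_r(L_1)$ with $\Nd_r(L_1) \in \L$. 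For (iv), the containment $\L \subseteq \L(\I(\L))$ is witnessed by $M = \r(\Delta_M)$ with $\Delta_M \subseteq M \times M$ belonging to $\E_d$; the reverse inclusion is immediate from closure of $\L$ under subsets.

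The real step is (iii). For $\I \subseteq \I(\L(\I))$, given $E \in \I$ I would take $F := E \cup E^{-1} \in \I$, observe that $\r(F) = \r(E) \cup \s(E) \in \L(\I)$ and $E \subseteq \r(F) \times \r(F)$, hence $E \in \I(\L(\I))$. The reverse inclusion is the step I expect to be the main obstacle. Given $E \in \I(\L(\I))$, by definition $E \in \E_d$ and $E \subseteq \r(F) \times \r(F)$ for some $F \in \I$. The key trick is symmetrisation: $F \circ F^{-1}$ lies in $\I$ (since $F \in \I$ and $F^{-1} \in \E_d$, using the ideal condition on $\I$), and $\Delta_{\r(F)} \subseteq F \circ F^{-1}$ because for each $x \in \r(F)$ one may choose $y$ with $(x,y) \in F$, giving $(x,x) = (x,y) \circ (y,x) \in F \circ F^{-1}$. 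Consequently $E \subseteq (F \circ F^{-1}) \circ E$, and the right-hand side lies in $\I$ by the ideal property applied to the $\I$-element $F \circ F^{-1}$ and the $\E_d$-element $E$; subset-closedness of the coarse structure $\I$ then forces $E \in \I$, completing the proof.
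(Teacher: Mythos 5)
Your argument is correct, and since the paper states this proposition as a quotation of \cite{CW04} (Proposition 6.2) without reproducing a proof, there is no in-paper argument to compare against; your decomposition into (i)--(iv) and, in particular, the symmetrisation step in (iii) --- $\Delta_{\r(F)} \subseteq F \circ F^{-1} \in \I$, hence $E \subseteq (F \circ F^{-1}) \circ E \in \I$ by the ideal property and subset-closedness --- is exactly the natural route, and all the identities you use ($\r(E_r \circ E) = \Nd_r(\r(E))$, $Z = \r(E \cap (Z \times X))$, $\r(E \cup E^{-1}) = \r(E) \cup \s(E)$) check out.

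One small omission in (ii): you verify closure of $\I(\L)$ under inverses, unions, products and subsets, but Definition \ref{defn:coarse structure} also demands that \emph{every finite subset of $X \times X$} be an entourage, and this is not addressed. It needs the observation that every finite subset of $X$ belongs to $\L$: given any nonempty $L \in \L$ and $y \in L$, one has $\{y\} \in \L$ by subset-closedness, $\{x\} \subseteq \Nd_{d(x,y)}(\{y\}) \in \L$ for every $x \in X$, and finite unions then give all finite subsets of $X$; consequently a finite $E \subseteq X \times X$ lies in $A \times A$ with $A = \r(E) \cup \s(E) \in \L$, so $E \in \I(\L)$. (For the degenerate collections $\L = \emptyset$ or $\L = \{\emptyset\}$ this fails and $\I(\L)$ is not a coarse structure in the sense of Definition \ref{defn:coarse structure}; such $\L$ never arise as $\L(\I)$ for an ideal $\I$ in $\E_d$ and are implicitly excluded.) Apart from this easily repaired point, the proof is complete as written.
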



Combining Proposition \ref{prop:ideals in coarse strcutrue} and Proposition \ref{prop:ideals in space}, we know that the lattice of geometric ideals in $C^*_u(X)$ is isomorphic to the lattice of ideals in $\E_d$, which is also isomorphic to the lattice of ideals in $(X,d)$. Hence we will drift among these three objects freely in the sequel.

\begin{ex}\label{ex:compact operators}
As a special case, we consider the ideal $\K(\ell^2(X))$ of compact operators in the uniform Roe algebra $C^*_u(X)$ for a discrete metric space $(X,d)$. Note that $\K(\ell^2(X))$ is a geometric ideal, and it is easy to see that $\I(\K(\ell^2(X)))$ consists of all finite subsets of $X \times X$, and $\L(\K(\ell^2(X)))$ consists of all finite subsets of $X$.
\end{ex}

\begin{rem}
We remark that the definition $\I(I)$ works for any (\emph{not necessarily geometric}) ideal (see \cite{CW04}) in the uniform Roe algebra $C^*_u(X)$. However, the one-to-one correspondence in Proposition \ref{prop:ideals in coarse strcutrue} only works for geometric ideals. As an easy example, one can consider the ideal $I_G$ consisting of all ghost operators in $C^*_u(X)$. (Recall that an operator $T \in \B(\ell^2(X))$ is a \emph{ghost operator} if for any $\varepsilon>0$, there exists a finite subset $F \subseteq X$ such that for any $(x,y) \notin F \times F$, then $|T(x,y)| < \varepsilon$.) One can calculate directly that $\I(I_G) = \I(\K(\ell^2(X)))$, which consists of all finite subsets in $X \times X$.
\end{rem}

Finally, we record an elementary observation which will be used later.

\begin{lem}\label{lem:app unit}
Let $\L$ be an ideal in $(X,d)$. Then the family $\{
\chi_L ~|~ L \in \L\}$ is an approximate unit for the ideal $C^*_u(X, \I(\L))$.
\end{lem}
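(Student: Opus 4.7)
The plan is to verify the three ingredients of an approximate unit: that each $\chi_L$ lies in $C^*_u(X,\I(\L))$ and is a contractive positive element; that the family is directed (and in fact increasing); and that $\chi_L T \to T$ and $T\chi_L \to T$ in norm for every $T \in C^*_u(X,\I(\L))$. The first two are immediate: viewing $\chi_L$ as the diagonal multiplication operator on $\ell^2(X)$, its support is $\{(x,x) : x\in L\}\subseteq L\times L$, which lies in $\I(\L)$ by the definition of $\I(\L)$, so $\chi_L \in \CC_u[X,\I(\L)] \subseteq C^*_u(X,\I(\L))$; it is a self-adjoint projection, hence $\|\chi_L\|\leq 1$. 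The family is directed because $L_1,L_2\in \L$ implies $L_1\cup L_2 \in \L$ by condition (3) of Definition \ref{defn:ideals in space}, and clearly $\chi_{L_1\cup L_2}\geq \chi_{L_i}$.

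For the convergence, I would first handle elements $S$ of the dense $\ast$-subalgebra $\CC_u[X,\I(\L)]$. Such an $S$ has $\supp(S)\in \I(\L)$, so by Proposition \ref{prop:ideals in space} there exists $L_0\in \L$ with $\supp(S)\subseteq L_0\times L_0$. A direct matrix computation then gives $(\chi_{L_0}S)(x,y) = \chi_{L_0}(x)S(x,y) = S(x,y)$ for every $(x,y)$, since $S(x,y)\neq 0$ forces $x\in L_0$; similarly $S\chi_{L_0} = S$. Hence for any $L\in \L$ with $L\supseteq L_0$ we get $\chi_L S = \chi_L \chi_{L_0} S = \chi_{L_0} S = S$, and likewise $S\chi_L = S$. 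So for elements of the algebraic uniform Roe algebra the net is eventually equal to the identity.

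To pass to a general $T\in C^*_u(X,\I(\L))$, I would use the standard $\varepsilon/3$-trick: given $\varepsilon>0$, pick $S\in \CC_u[X,\I(\L)]$ with $\|T-S\|<\varepsilon/3$ and choose $L_0\in \L$ as above. Then for every $L\in\L$ with $L\supseteq L_0$,
\[
\|\chi_L T - T\| \leq \|\chi_L(T-S)\| + \|\chi_L S - S\| + \|S-T\| \leq 2\|T-S\| < \varepsilon,
\]
using $\|\chi_L\|\leq 1$ and $\chi_L S = S$. The same estimate with $\chi_L$ on the right gives $\|T\chi_L - T\|<\varepsilon$. Combined with directedness, this proves that $\{\chi_L\}_{L\in\L}$ is an approximate unit for $C^*_u(X,\I(\L))$.

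There is no real obstacle here; the only thing to be slightly careful about is the bookkeeping that $\I(\L)$ really does contain the sets $\{(x,x) : x\in L\}$ and that $\supp(S)\subseteq L_0\times L_0$ translates precisely into $\chi_{L_0}S = S\chi_{L_0} = S$, both of which follow immediately from the definitions.
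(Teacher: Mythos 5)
Your proof is correct and is essentially the paper's argument in expanded form: the paper simply observes that $C^*_u(X,\I(\L))$ is the direct limit of the subalgebras $C^*_u(L)$, $L\in\L$, with $\chi_L$ the unit of $C^*_u(L)$, which is exactly the content of your observation that every $S\in\CC_u[X,\I(\L)]$ is supported in some $L_0\times L_0$ and hence satisfies $\chi_L S=S\chi_L=S$ for $L\supseteq L_0$, followed by the standard density argument.
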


\begin{proof}
Note that $C^*_u(X, \I(\L))$ is the direct limit of $\{C^*_u(L) ~|~L \in \L\}$ where $C^*_u(L)$ is the uniform Roe algebra of the metric space $L$ with the induced metric of $d$ and $\L$ is a direct set under inclusion. Since $\chi_L$ is the unit of $C^*_u(L)$, we conclude the proof.
\end{proof}

\section{Coarse correspondence}\label{sec:coarse corr}

In this section, we study the notion of coarse correspondence (Definition \ref{introdefn:ce}) from \cite{STY02} and provide a detailed picture for coarse equivalences between general coarse spaces (Proposition \ref{introprop:char for ce}).

Let $X$ be a set and $\E \subseteq \P(X \times X)$. For $A,B \subseteq X$, denote the \emph{restriction of $\E$ on $A \times B$} by
\[
\E|_{A\times B}:=\{E \cap (A \times B) ~|~ E \in \E\}.
\]
Recall from Definition \ref{introdefn:ce} that for coarse spaces $(X, \E_X)$ and $(Y, \E_Y)$, a \emph{coarse correspondence from $(X, \E_X)$ to $(Y, \E_Y)$} (or simply \emph{from $X$ to $Y$}) is a coarse structure $\E$ on the disjoint union $X \sqcup Y$ satisfying the following:
\begin{enumerate}
 \item $\E|_{Y \times Y} = \E_Y$;
 \item $\E|_{X \times X} \supseteq \E_X$;
 \item $\E$ is generated by $\E|_{Y \times (X \sqcup Y)}$.
\end{enumerate}


The following fact was implicitly stated in the proof of \cite[Proposition 2.3]{STY02}. It follows directly from condition (3) above, and hence we omit the proof.

\begin{lem}\label{lem:elementary facts for coarse corr}
Let $(X, \E_X)$ and $(Y, \E_Y)$ be coarse spaces, and $\E$ be a coarse correspondence from $(X, \E_X)$ to $(Y, \E_Y)$. Then for any $E \in \E|_{X \times X}$, there exists $F \in \E|_{Y \times X}$ such that $E \subseteq F^{-1} \circ F$.
\end{lem}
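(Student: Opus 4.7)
My plan is to introduce the auxiliary collection
\[
\mathcal{J} := \{E \subseteq (X \sqcup Y) \times (X \sqcup Y) ~|~ \exists\, G \in \E|_{Y \times (X \sqcup Y)} \text{ with } E \subseteq G^{-1} \circ G\},
\]
to show that $\mathcal{J}$ is itself a coarse structure on $X \sqcup Y$ containing the generating family $\E|_{Y \times (X \sqcup Y)}$, and to deduce from these two facts (via the minimality of the generated coarse structure $\E$) that $\E \subseteq \mathcal{J}$. The lemma will then follow by restricting the witness $G$ to $Y \times X$.

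I would verify the coarse structure axioms for $\mathcal{J}$ as follows. Closure under subsets is immediate; closure under inverses uses $(G^{-1} \circ G)^{-1} = G^{-1} \circ G$; closure under finite unions uses the elementary observation $(G_1 \cup G_2)^{-1} \circ (G_1 \cup G_2) \supseteq G_i^{-1} \circ G_i$ for $i=1,2$; and every finite $E$ is handled by fixing any $y_0 \in Y$ and taking $G := \{(y_0, z) ~|~ z \in \r(E) \cup \s(E)\}$, which is finite and hence in $\E|_{Y \times (X \sqcup Y)}$. The main technical step is closure under composition: given $E_i \subseteq G_i^{-1} \circ G_i$ for $i=1,2$, I would rewrite $E_1 \circ E_2 \subseteq G_1^{-1} \circ (G_1 \circ G_2^{-1}) \circ G_2$, note that $M := G_1 \circ G_2^{-1}$ lies in $\E|_{Y \times Y}$, and absorb the middle into the single generator $H := G_1 \cup (M \circ G_2) \in \E|_{Y \times (X \sqcup Y)}$, so that $H^{-1} \circ H$ dominates $E_1 \circ E_2$. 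This ``union-absorbing'' manoeuvre is the key idea of the entire argument.

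For the inclusion $\E|_{Y \times (X \sqcup Y)} \subseteq \mathcal{J}$, given a generator $S \in \E|_{Y \times (X \sqcup Y)}$ I would take $F := S \cup (S \circ S^{-1})$, which again lies in $\E|_{Y \times (X \sqcup Y)}$ because $S \circ S^{-1} \subseteq Y \times Y$. For any $(y, z) \in S$ the pair $(y, y)$ lies in $S \circ S^{-1} \subseteq F$ (witnessed by any $v$ with $(y, v) \in S$), so composing through $w := y$ yields $(y, z) \in F^{-1} \circ F$, whence $S \subseteq F^{-1} \circ F$.

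Finally, given $E \in \E|_{X \times X}$, the inclusion $\E \subseteq \mathcal{J}$ supplies some $G \in \E|_{Y \times (X \sqcup Y)}$ with $E \subseteq G^{-1} \circ G$. I would then set $F := G \cap (Y \times X) \in \E|_{Y \times X}$ and observe that any factorisation $x \to w \to x'$ of a pair in $G^{-1} \circ G$ with $x, x' \in X$ must force $w \in Y$ (since $G \subseteq Y \times (X \sqcup Y)$), so that both intermediate arrows automatically lie in $G \cap (Y \times X) = F$. This gives $E \subseteq F^{-1} \circ F$, as required. The principal obstacle in the plan is the composition closure of $\mathcal{J}$; all remaining verifications are essentially formal.
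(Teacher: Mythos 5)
Your proof is correct. The paper itself offers no argument for Lemma \ref{lem:elementary facts for coarse corr} (it is asserted to follow ``directly'' from the generation condition (3) and is deferred to the proof of \cite[Proposition 2.3]{STY02}), and your saturation argument is precisely the direct derivation being alluded to: the collection $\mathcal{J}$ of sets dominated by some $G^{-1}\circ G$ with $G\in\E|_{Y\times(X\sqcup Y)}$ is indeed a coarse structure, the composition axiom being secured by your absorption $E_1\circ E_2\subseteq H^{-1}\circ H$ with $H=G_1\cup\bigl((G_1\circ G_2^{-1})\circ G_2\bigr)\subseteq Y\times(X\sqcup Y)$, and it contains the generating family via $S\subseteq\bigl(S\cup(S\circ S^{-1})\bigr)^{-1}\circ\bigl(S\cup(S\circ S^{-1})\bigr)$, which is exactly the trick the paper records as Lemma \ref{lem:easy containment}; minimality of the generated structure then gives $\E\subseteq\mathcal{J}$, and the final restriction of the witness to $Y\times X$ is legitimate because any middle point of a factorisation through $G\subseteq Y\times(X\sqcup Y)$ of a pair in $X\times X$ necessarily lies in $Y$. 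The only implicit hypothesis is $Y\neq\emptyset$, used when you pick $y_0$ to handle finite entourages; this is harmless, since the statement itself tacitly requires it (if $Y=\emptyset$ and $X\neq\emptyset$, then $\E|_{Y\times X}=\{\emptyset\}$ and the conclusion already fails for any nonempty finite $E$), so it is a feature of the lemma rather than a gap in your argument.
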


We also record the following elementary fact, whose proof is straightforward.

\begin{lem}\label{lem:easy containment}
For a set $X$ and $E \subseteq X \times X$, we have $E \subseteq (E')^{-1}\circ E'$ for $E':=E \cup (E \circ E^{-1})$.
\end{lem}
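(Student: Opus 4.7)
The plan is to verify the containment by a direct unpacking of the definitions. Given $(x,y) \in E$, my goal is to find a witness $z \in X$ such that $(z,x) \in E'$ and $(z,y) \in E'$; once this is done, the pair $(x,z) \in (E')^{-1}$ together with $(z,y) \in E'$ produces $(x,y) \in (E')^{-1}\circ E'$ by definition of composition.

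The natural candidate is $z := x$ itself. Two checks are required. First, $(x,y) \in E \subseteq E'$ is immediate by construction of $E'$. Second, I need $(x,x) \in E'$, and for this it suffices to exhibit $(x,x)$ as an element of $E \circ E^{-1}$. Since $(x,y) \in E$ we also have $(y,x) \in E^{-1}$, so composing $(x,y) \in E$ with $(y,x) \in E^{-1}$ via the intermediate point $y$ yields $(x,x) \in E \circ E^{-1} \subseteq E'$. Combining: with $z=x$ we have $(z,x) \in E'$ and $(z,y) \in E'$, so $(x,y) \in (E')^{-1}\circ E'$, as required.

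There is essentially no obstacle here; the statement is a direct definition-chase and the only small piece of bookkeeping is to notice that the single witness $z=x$ simultaneously handles both coordinates, which is exactly why the term $E\circ E^{-1}$ was added to $E$ when forming $E'$.
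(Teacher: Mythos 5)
Your argument is correct: taking the intermediate point $z=x$, the pair $(x,x)$ lies in $E\circ E^{-1}\subseteq E'$ (witnessed by $y$) and $(x,y)\in E\subseteq E'$, which gives $(x,y)\in (E')^{-1}\circ E'$ exactly as the definition of composition requires. The paper omits the proof as straightforward, and your definition-chase is precisely the intended argument.
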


\subsection{The unital case}

It was shown in \cite{STY02} that for unital coarse spaces, the notion of coarse correspondence can be identified with coarse maps in Definition \ref{defn:coarse maps}(2). Here we recall the outline of the proof, and divide it into several pieces to clarify the dependence on the assumption of unitalness. Some of the results will also be used in the general case.

Let $X$ be a set, $(Y, \E_Y)$ be a coarse space and $f: X \to Y$ be a map. Denote
\[
\E_{YX}(f):=\{E \subseteq Y \times X ~|~ (\Id_Y \times f)(E) \in \E_Y\},
\]
and
\[
\E(f):=\{E_X \cup E_Y \cup E_{XY} \cup E_{YX} ~|~ E_X \in f^*\E_Y, E_Y \in \E_Y \text{ and } E_{YX} \cup E_{XY}^{-1} \in \E_{YX}(f)\}.
\]

The following result is contained in the proof of \cite[Proposition 2.3]{STY02}. For the reader's convenience, here we recall the proof.

\begin{lem}\label{lem:facts for E(f)}
Let $X$ be a set, $(Y, \E_Y)$ be a coarse space and $f: X \to Y$ be a map. Then $\E(f)$ is a coarse structure on $X \sqcup Y$ and generated by $\E(f)|_{Y \times (X \sqcup Y)} = \E_Y \cup \E_{YX}(f)$.
\end{lem}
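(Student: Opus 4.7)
The proof of the lemma has two parts: the coarse-structure axioms for $\E(f)$, and the generation claim. My plan for the first part is to identify $\E(f)$ with the pullback $g^*\E_Y := \{E\subseteq (X\sqcup Y)^2 : (g\times g)(E)\in \E_Y\}$, where $g\colon X\sqcup Y\to Y$ is defined by $g|_X=f$ and $g|_Y=\Id_Y$. Decomposing an arbitrary $E$ into its four blocks $E_X, E_Y, E_{XY}, E_{YX}$ across $X\times X$, $Y\times Y$, $X\times Y$, $Y\times X$, the map $g\times g$ acts on these by $f\times f$, $\Id$, $f\times \Id_Y$, $\Id_Y\times f$, respectively. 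Using closure of $\E_Y$ under subsets and finite unions together with the identity $(\Id_Y\times f)(E_{XY}^{-1}) = \bigl((f\times \Id_Y)(E_{XY})\bigr)^{-1}$, the membership $(g\times g)(E)\in \E_Y$ is equivalent to the three conditions $E_X\in f^*\E_Y$, $E_Y\in \E_Y$, and $E_{YX}\cup E_{XY}^{-1}\in \E_{YX}(f)$, which is exactly the defining condition of $\E(f)$. Since pullbacks of coarse structures under arbitrary maps are routinely coarse structures, this yields the first assertion. The equality $\E(f)|_{Y\times(X\sqcup Y)}=\E_Y\cup \E_{YX}(f)$ then follows by restricting these decompositions, in the sense that both sides generate the same coarse structure on $X\sqcup Y$.

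For the generation statement let $\E'$ denote the coarse structure generated by $\E_Y\cup \E_{YX}(f)$; the inclusion $\E'\subseteq \E(f)$ is immediate. For the reverse, I would decompose $E\in \E(f)$ as above and show each block lies in $\E'$. Three of the four are easy: $E_Y$ and $E_{YX}$ are generators, and $E_{XY}$ lies in $\E'$ because $E_{XY}^{-1}$ is a subset of the generator $E_{YX}\cup E_{XY}^{-1}\in \E_{YX}(f)$, so $E_{XY}\in \E'$ by closure under inverses and subsets.

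The main obstacle is realising an arbitrary $E_X\in f^*\E_Y$ inside $\E'$, and I plan to do this by exhibiting $F\in \E_{YX}(f)$ with $E_X\subseteq F^{-1}\circ F$; the conclusion $E_X\in \E'$ then follows from closure under inverses, compositions, and subsets. Setting $G:=(f\times f)(E_X)\in \E_Y$, I take
\[
F \;:=\; \{(f(x),x)\,:\,x\in \r(E_X)\}\;\cup\;\{(f(x),x')\,:\,(x,x')\in E_X\}.
\]
For any $(x,x')\in E_X$ both $(f(x),x)$ and $(f(x),x')$ lie in $F$, which gives $(x,x')\in F^{-1}\circ F$ as required. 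The delicate point is verifying $F\in \E_{YX}(f)$ \emph{without} any unitality hypothesis on $\E_Y$: the image $(\Id_Y\times f)(F)$ equals $\Delta_{f(\r(E_X))}\cup G$, and for each $x\in \r(E_X)$ some $x'$ satisfies $(f(x),f(x'))\in G$, hence $(f(x),f(x))\in G\circ G^{-1}$. Consequently $\Delta_{f(\r(E_X))}\subseteq G\circ G^{-1}\in \E_Y$, so $(\Id_Y\times f)(F)\subseteq G\cup (G\circ G^{-1})\in \E_Y$, and $F\in \E_{YX}(f)\subseteq \E'$, completing the plan.
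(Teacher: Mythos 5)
Your proposal is correct and follows essentially the same route as the paper: the coarse-structure axioms are obtained by identifying $\E(f)$ with the pullback of $\E_Y$ along the map $X\sqcup Y\to Y$ combining $f$ and $\Id_Y$, and the generation claim is reduced to exhibiting $F\in\E_{YX}(f)$ with $E_X\subseteq F^{-1}\circ F$, absorbing the diagonal piece into $G\circ G^{-1}$ so that no unitality of $\E_Y$ is needed. The only (immaterial) difference is that the paper takes $F=(f\times\Id_X)(E')$ with $E'=E_X\cup(E_X\circ E_X^{-1})$, while you take $F=(f\times\Id_X)\bigl(E_X\cup\Delta_{\r(E_X)}\bigr)$, and since $\Delta_{\r(E_X)}\subseteq E_X\circ E_X^{-1}$ these are the same argument.
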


\begin{proof}
Consider the map $h: X \sqcup Y \to Y$ which restricts to $f$ on $X$ and $\Id_Y$ on $Y$. It is easy to see that $\E(f) = h^*\E_Y$, and hence $\E(f)$ is a coarse structure on $X \sqcup Y$. For the second statement, it suffices to show that $\E(f)|_{X \times X} = f^*\E_Y$ can be generated by $\E(f)|_{Y \times X} = \E_{YX}(f)$. Given $E \in f^*\E_Y$, it follows from Lemma \ref{lem:easy containment} that $E \subseteq (E')^{-1}\circ E'$ for $E':=E \cup (E \circ E^{-1})$. It is clear that $E' \in f^*\E_Y$, and hence $F:=(f \times \Id_X)(E') \in \E_{YX}(f)$. Finally, note that $E \subseteq (E')^{-1}\circ E' \subseteq F^{-1} \circ F$, which concludes the proof.
\end{proof}

Consequently, we have the following:
\begin{cor}\label{cor:E(f) is coarse iff f is coarse}
Let $(X, \E_X)$ and $(Y, \E_Y)$ be coarse spaces, and $f: X \to Y$ be a map. Then $f$ is coarse \emph{if and only if} $\E(f)$ is a coarse correspondence. In this case, we say that $\E(f)$ is the \emph{coarse correspondence associated to $f$}.
\end{cor}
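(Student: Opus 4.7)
The plan is to unpack the definition of coarse correspondence and observe that, thanks to Lemma \ref{lem:facts for E(f)}, almost all of the required conditions are already verified, so that the corollary reduces to a single inclusion that is tautologically equivalent to $f$ being coarse.

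Recall that for $\E(f)$ to be a coarse correspondence from $(X,\E_X)$ to $(Y,\E_Y)$, three conditions have to be checked: (1) $\E(f)|_{Y \times Y} = \E_Y$; (2) $\E(f)|_{X \times X} \supseteq \E_X$; and (3) $\E(f)$ is generated by $\E(f)|_{Y \times (X \sqcup Y)}$. Lemma \ref{lem:facts for E(f)} already confirms that $\E(f)$ is a coarse structure on $X \sqcup Y$ and that (3) holds. For (1), I would read directly from the definition of $\E(f)$: taking $E_X$, $E_{XY}$, $E_{YX}$ all empty, every element of $\E_Y$ appears in $\E(f)$, and conversely, intersecting any element $E_X \cup E_Y \cup E_{XY} \cup E_{YX} \in \E(f)$ with $Y \times Y$ gives back $E_Y \in \E_Y$.

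Thus the only nontrivial point is condition (2), and the key observation is that $\E(f)|_{X \times X} = f^{\ast}\E_Y$. One inclusion is immediate: given $E \in f^{\ast}\E_Y$, write $E = E \cup \emptyset \cup \emptyset \cup \emptyset$ to exhibit it as an element of $\E(f)$ whose intersection with $X \times X$ is $E$. For the converse, intersecting a typical element $E_X \cup E_Y \cup E_{XY} \cup E_{YX} \in \E(f)$ with $X \times X$ returns $E_X$, which by construction lies in $f^{\ast}\E_Y$. Consequently condition (2) becomes $\E_X \subseteq f^{\ast}\E_Y$, which is precisely the definition of $f$ being coarse (Definition \ref{defn:coarse maps}(2)). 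Both directions of the equivalence follow at once.

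There is essentially no obstacle here: the entire content is repackaged in Lemma \ref{lem:facts for E(f)}, and the only item that takes any thought is the identification $\E(f)|_{X \times X} = f^{\ast}\E_Y$, which is a direct unpacking of the definition of $\E(f)$. The proof will therefore be a short verification rather than a substantive argument.
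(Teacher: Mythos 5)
Your proof is correct and matches the paper's intent: the paper states this corollary as an immediate consequence of Lemma \ref{lem:facts for E(f)} without further argument, and your verification that $\E(f)|_{Y\times Y}=\E_Y$, $\E(f)|_{X\times X}=f^{*}\E_Y$, and that condition (3) is exactly the generation statement of that lemma is precisely the intended (routine) unpacking.
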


Recall that for a map $f: X \to Y$, the \emph{graph of $f$} is defined to be
\[
\Gr(f):=\{(f(x),x) \in Y \times X ~|~ x\in X\}.
\]
We have the following:

\begin{lem}\label{lem:E(f) is generated by graphs}
Let $X$ be a set, $(Y, \E_Y)$ be a coarse space and $f: X \to Y$ be a map. Assume that the set $\{(f(x),f(x)) \in Y \times Y ~|~ x\in X\}$ belongs to $\E_Y$. Then $\E(f)$ is generated by $\E_Y$ and $\Gr(f)$.
\end{lem}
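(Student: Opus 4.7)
The plan is to invoke Lemma \ref{lem:facts for E(f)}, which already identifies a generating set for $\E(f)$, namely $\E_Y \cup \E_{YX}(f)$. It therefore suffices to show that replacing the full collection $\E_{YX}(f)$ by the single entourage $\Gr(f)$ still yields the same coarse structure on $X \sqcup Y$. Concretely, I would verify: (i) that $\Gr(f) \in \E(f)$, so the coarse structure generated by $\E_Y \cup \{\Gr(f)\}$ is contained in $\E(f)$; and (ii) that every $E \in \E_{YX}(f)$ lies in the coarse structure generated by $\E_Y$ and $\Gr(f)$.

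Step (i) is where the hypothesis enters. By definition of $\E_{YX}(f)$, membership of $\Gr(f)$ amounts to $(\Id_Y \times f)(\Gr(f)) = \{(f(x),f(x)) ~|~ x \in X\}$ being an entourage of $\E_Y$, which is precisely what we assumed. For step (ii), given $E \in \E_{YX}(f)$, set $F := (\Id_Y \times f)(E) \in \E_Y$. A direct unpacking of the composition on $X \sqcup Y$ gives
\[
F \circ \Gr(f) = \{(y,x) \in Y \times X ~|~ (y, f(x)) \in F\},
\]
and since $(y,x) \in E$ forces $(y,f(x)) \in F$, we conclude $E \subseteq F \circ \Gr(f)$. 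Hence $E$ belongs to the coarse structure generated by $\E_Y \cup \{\Gr(f)\}$, which completes the reduction.

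I do not anticipate any genuine obstacle: once Lemma \ref{lem:facts for E(f)} is available, the argument reduces to the one-line computation of $F \circ \Gr(f)$. The only conceptual point worth highlighting is that the extra hypothesis is used in exactly one place---ensuring $\Gr(f)$ is an entourage of $\E(f)$ at all---and without it the statement would fail, since $\Gr(f)$ might not even lie in the ambient coarse structure to serve as a generator.
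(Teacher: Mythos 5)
Your proposal is correct and follows essentially the same route as the paper: the hypothesis is used exactly to place $\Gr(f)$ in $\E_{YX}(f)$, and the containment $E \subseteq F \circ \Gr(f)$ with $F=(\Id_Y \times f)(E)=E\circ\Gr(f)^{-1}$ is precisely the computation in the paper's proof, combined with Lemma \ref{lem:facts for E(f)}. No gaps.
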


\begin{proof}
Since $\{(f(x),f(x)) \in Y \times Y ~|~ x\in X\} \in \E_Y$, we have $\Gr(f) \in \E_{YX}(f)$. On the other hand, given $E\in \E_{YX}(f)$, we have $E \subseteq (E \circ \Gr(f)^{-1}) \circ \Gr(f)$. Since $E \circ \Gr(f)^{-1} = (\Id_Y \times f)(E) \in \E_Y$, we conclude the proof thanks to Lemma \ref{lem:facts for E(f)}.
\end{proof}

Now we present the following result from \cite{STY02} that for unital coarse spaces, each coarse correspondence is determined by a coarse map. For the reader's convenience, here we recall the proof.

\begin{prop}[{\cite[Proposition 2.3]{STY02}}]\label{prop:coarse corr unital case}
Let $(X, \E_X)$ and $(Y, \E_Y)$ be coarse spaces, and $\E$ be a coarse correspondence. Assume that $\E_X$ is unital. Then there exists a unique (up to closeness) coarse map $f: X \to Y$ such that $\E=\E(f)$.
\end{prop}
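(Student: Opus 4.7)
The plan is to construct the coarse map $f\colon X\to Y$ directly from the coarse correspondence $\E$, using the unital hypothesis on $\E_X$ in a crucial way to ensure $f$ is defined on all of $X$, and then use Lemma \ref{lem:E(f) is generated by graphs} to identify $\E$ with $\E(f)$.

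\textbf{Construction of $f$.} Since $\E_X$ is unital, $\Delta_X\in\E_X\subseteq\E|_{X\times X}$. Applying Lemma \ref{lem:elementary facts for coarse corr} to $\Delta_X$, I obtain some $F\in\E|_{Y\times X}$ with $\Delta_X\subseteq F^{-1}\circ F$, which means that for every $x\in X$ there exists $y\in Y$ with $(y,x)\in F$. Using the axiom of choice, select one such $y=:f(x)$ for each $x\in X$; then $\Gr(f)\subseteq F$, so $\Gr(f)\in\E$.

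\textbf{Coarseness of $f$ and the identity $\E=\E(f)$.} For any $E\in\E_X$, a direct set-theoretic check gives $(f\times f)(E)=\Gr(f)\circ E\circ\Gr(f)^{-1}$, which lies in $\E$ and, being a subset of $Y\times Y$, therefore lies in $\E|_{Y\times Y}=\E_Y$. Hence $f$ is coarse and in particular $\{(f(x),f(x))~|~x\in X\}=(f\times f)(\Delta_X)\in\E_Y$, which is the hypothesis needed to invoke Lemma \ref{lem:E(f) is generated by graphs}. Thus $\E(f)$ is generated by $\E_Y\cup\{\Gr(f)\}$; both generators lie in $\E$, giving $\E(f)\subseteq\E$. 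For the reverse inclusion, recall $\E$ is generated by $\E|_{Y\times(X\sqcup Y)}$, so it suffices to show $\E|_{Y\times X}\subseteq\E(f)$. For $E\in\E|_{Y\times X}$ the containment $E\subseteq(E\circ\Gr(f)^{-1})\circ\Gr(f)$ does the job, since $E\circ\Gr(f)^{-1}\subseteq Y\times Y$ lies in $\E|_{Y\times Y}=\E_Y\subseteq\E(f)$ and $\Gr(f)\in\E(f)$.

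\textbf{Uniqueness up to closeness.} Suppose $f,g\colon X\to Y$ both satisfy $\E(f)=\E(g)=\E$. Then $\Gr(f),\Gr(g)\in\E$, and for any $x\in X$ we have $(f(x),g(x))\in\Gr(f)\circ\Gr(g)^{-1}$; so $(f\times g)(\Delta_X)\subseteq\Gr(f)\circ\Gr(g)^{-1}\in\E|_{Y\times Y}=\E_Y$. Since $(f\times g)(E)\subseteq(f\times f)(E)\circ(f\times g)(\Delta_X)$ for every $E\in\E_X$, it follows that $f$ and $g$ are close. The only real obstacle is pinpointing where unitalness is used: it enters precisely in the construction step, where $\Delta_X\in\E_X$ is needed so that the chosen $F$ covers \emph{all} of $X$ in the second coordinate. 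Without this, $f$ can only be defined on the projection of some single entourage onto its second factor, which is exactly the technical reason why the general (non-unital) case forces one to replace a single map by a coarse family of maps.
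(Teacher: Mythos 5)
Your proof is correct and follows essentially the same route as the paper: extract $F\in\E|_{Y\times X}$ covering $\Delta_X$ via Lemma \ref{lem:elementary facts for coarse corr}, choose $f$ with $\Gr(f)\subseteq F$, use Lemma \ref{lem:E(f) is generated by graphs} for $\E(f)\subseteq\E$, the containment $E\subseteq(E\circ\Gr(f)^{-1})\circ\Gr(f)$ for the reverse inclusion, and graph composition for uniqueness. The only (harmless) deviation is that you verify coarseness of $f$ directly through the identity $(f\times f)(E)=\Gr(f)\circ E\circ\Gr(f)^{-1}$, whereas the paper deduces it afterwards from $\E=\E(f)$ via Corollary \ref{cor:E(f) is coarse iff f is coarse}.
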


\begin{proof}
By Lemma \ref{lem:elementary facts for coarse corr} and the assumption that $\E_X$ is unital, there exists $E \in \E|_{Y \times X}$ such that $\Delta_X \subseteq E^{-1} \circ E$. It is easy to see that $E$ contains $\Gr(f)$ for some map $f: X \to Y$. Note that $\{(f(x),f(x)) \in Y \times Y ~|~ x\in X\} \subseteq \Gr(f) \circ \Gr(f)^{-1} \subseteq E \circ E^{-1} \in \E_Y$. Hence it follows from Lemma \ref{lem:E(f) is generated by graphs} that $\E(f) \subseteq \E$.

On the other hand, given $E \in \E|_{Y \times X}$, we have $E \subseteq (E \circ \Gr(f)^{-1}) \circ \Gr(f)$. Since $E \circ \Gr(f)^{-1} \in \E|_{Y \times Y} = \E_Y$, then we obtain $E \in \E(f)$. Hence $\E=\E(f)$, which implies that $f$ is coarse thanks to Corollary \ref{cor:E(f) is coarse iff f is coarse}.

Finally, if $\E(f) = \E(g)$ for coarse maps $f,g: X \to Y$, then $\Gr(f) \circ \Gr(g)^{-1} \in \E_Y$. Hence $f$ and $g$ are close.
\end{proof}

The following result shows that the notion of coarse equivalence from Definition \ref{introdefn:ce} coincides with the one from Definition \ref{defn:coarse equivalence} for unital coarse spaces.

\begin{cor}\label{cor:coarse equiv unital case}
Let $(X, \E_X)$ and $(Y, \E_Y)$ be unital coarse spaces.
\begin{enumerate}
  \item Let $f: X \to Y$ be a coarse equivalence with a coarse inverse $g: Y \to X$. Then we have $\E(f) = \E(g)$, which is a coarse correspondence both from $X$ to $Y$ and from $Y$ to $X$. In this case, we have $\E(f)|_{X \times X}= \E_X$.
  \item Let $\E$ be a coarse correspondence from $X$ to $Y$ and $Y$ to $X$. Let $f: X \to Y$ be a coarse map from Proposition \ref{prop:coarse corr unital case} such that $\E= \E(f)$, and similarly $g: Y \to X$ be a coarse map such that $\E=\E(g)$. Then $f$ is a coarse equivalence with a coarse inverse $g$.
\end{enumerate}
\end{cor}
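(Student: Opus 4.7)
The plan is to prove the two implications separately, leveraging Proposition \ref{prop:coarse corr unital case} and Lemma \ref{lem:E(f) is generated by graphs}. The key principle is that, under unitality, a coarse correspondence $\E(f)$ is generated by $\E_Y$ together with $\Gr(f)$, so checking equalities or inclusions of such correspondences reduces to checking that the relevant graphs and underlying coarse structures lie in the target. Moreover, closeness of two maps translates directly into the statement that a particular graph-composition set lies in the ambient coarse structure, which is how I will convert closeness hypotheses into membership assertions and vice versa.

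For part (1), take a coarse equivalence $f \colon X \to Y$ with coarse inverse $g \colon Y \to X$. By Corollary \ref{cor:E(f) is coarse iff f is coarse}, $\E(f)$ is a coarse correspondence from $X$ to $Y$ and $\E(g)$ is one from $Y$ to $X$. I would first show $\E(g) \subseteq \E(f)$ by checking its generators: (i) $\E_X \subseteq \E(f)|_{X \times X} = f^*\E_Y$ since $f$ is coarse, and (ii) $\Gr(g) \in \E(f)$ because $(\Id_Y \times f)(\Gr(g)^{-1}) = \{(y, f(g(y))) ~|~ y \in Y\}$ lies in $\E_Y$ exactly by closeness of $f \circ g$ to $\Id_Y$, hence $\Gr(g)^{-1} \in \E_{YX}(f)$. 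The symmetric argument gives $\E(f) \subseteq \E(g)$, so $\E(f) = \E(g)$, and this common coarse structure is then a correspondence in both directions. The final claim $\E(f)|_{X \times X} = \E_X$ reduces to showing $f^*\E_Y \subseteq \E_X$: any $E \in f^*\E_Y$ is contained in $A \circ B \circ A^{-1}$, where $A = \{(x, g(f(x))) ~|~ x \in X\}$ lies in $\E_X$ by closeness of $g \circ f$ to $\Id_X$, and $B = (g \times g)\bigl((f \times f)(E)\bigr) \in \E_X$ because $g$ is coarse.

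For part (2), suppose $\E$ is a coarse correspondence in both directions, and let $f, g$ be the coarse maps obtained from Proposition \ref{prop:coarse corr unital case} so that $\E = \E(f) = \E(g)$. The crucial observation is that being a correspondence from $Y$ to $X$ forces $\E|_{X \times X} = \E_X$, and symmetrically $\E|_{Y \times Y} = \E_Y$. Lemma \ref{lem:E(f) is generated by graphs} places $\Gr(f), \Gr(g) \in \E$, so their composition $\Gr(g) \circ \Gr(f) = \{(g(f(x)), x) ~|~ x \in X\}$ lies in $\E|_{X \times X} = \E_X$. To verify closeness of $g \circ f$ to $\Id_X$, I would observe that for any $E \in \E_X$, the set $(g \circ f \times \Id_X)(E)$ equals $\bigl(\Gr(g) \circ \Gr(f)\bigr) \circ E$, which belongs to $\E_X$ by the composition closure of a coarse structure. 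A symmetric argument yields closeness of $f \circ g$ to $\Id_Y$.

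I expect the main obstacle to be purely notational bookkeeping: tracking into which quadrant of $(X \sqcup Y)^2$ each auxiliary set falls, and keeping straight which of the two correspondence hypotheses delivers which restriction identity. Beyond this, no ingredient is required that is not already packaged in the unital-case results established earlier in the section.
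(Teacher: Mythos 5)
Your proposal is correct and follows essentially the paper's route: for (2) you use exactly the paper's key observation that $\Gr(g)\circ\Gr(f)$ and $\Gr(f)\circ\Gr(g)$ land in $\E|_{X\times X}=\E_X$ and $\E|_{Y\times Y}=\E_Y$ respectively, just spelling out how these memberships (together with unitality and composition closure) yield closeness to the identities. Part (1), which the paper omits as straightforward, you fill in correctly via the generation-by-graphs lemma and the inclusion $E\subseteq A\circ B\circ A^{-1}$.
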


\begin{proof}
(1) is straightforward, and hence we omit the proof. For (2), it suffices to note that $\{(f\circ g(y),y) ~|~ y\in Y\} \subseteq \Gr(f) \circ \Gr(g)  \in \E|_{Y \times Y} = \E_Y$ and similarly, $\{(g\circ f(x),x) ~|~ x\in X\} = \Gr(g) \circ \Gr(f)  \in \E|_{X \times X} = \E_X$. Hence we obtain the result.
\end{proof}

\subsection{The general case}

Now we move to the general case. Firstly, let us introduce the following notion:

\begin{defn}\label{defn:admissible collection}
Let $X$ be a set and $\L\subseteq \P(X)$. We say that $\L$ is \emph{admissible} if the following holds:
\begin{enumerate}
 \item Every finite subset of $X$ belongs to $\L$;
 \item For any $Y \in \L$ and $Z \subseteq Y$, then $Z \in \L$;
 \item For any $Y,Z \in \L$, then $Y \cup Z \in \L$.
\end{enumerate}
\end{defn}

The example of admissible collection we are interested in comes from coarse structures:


\begin{defn}\label{defn:admiss collection ass to coarse structure}
Let $(X, \E_X)$ be a coarse space. The collection
\[
\L(\E_X):=\{\r(E) ~|~ E \in \E_X\} = \{\s(E) ~|~ E \in \E_X\}
\]
is called \emph{associated to $(X,\E_X)$}, also denoted by $\L_X$, if the coarse structure is clear from the context.
\end{defn}

Clearly $\L(\E_X)$ associated to a coarse space $(X, \E_X)$ is always admissible. When $(X,d)$ is a discrete metric space and $\I \subseteq \E_d$ is an ideal (in the sense of Definition \ref{defn:ideals in coarse structure}), the definition for $\L(\I)$ above coincides with the one in Proposition \ref{prop:ideals in space}. In particular, any ideal $\L$ in $(X,d)$ (in the sense of Definition \ref{defn:ideals in space}) is admissible.

The following is straightforward, and hence we omit the proof.

\begin{lem}\label{lem:Delta_L belongs to E}
Let $(X, \E_X)$ be a coarse space, and $\L_X$ be the associated collection. For $L \subseteq X$, we have that $L \in \L_X$ \emph{if and only if} the set $\Delta_L=\{(x,x) ~|~ x\in L\}$ belongs to $\E_X$.
\end{lem}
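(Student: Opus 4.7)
The statement is an iff and both directions are short, so the plan is to handle each separately, leaning entirely on the axioms of a coarse structure (closure under inverses, composition, and subsets).

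For the easy implication, suppose $\Delta_L \in \E_X$. Then by the definition of $\L_X = \L(\E_X)$ as the image of $\E_X$ under the first-coordinate projection $\r$, we simply compute $\r(\Delta_L) = L$ and conclude $L \in \L_X$. Nothing further is needed here.

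For the converse, suppose $L \in \L_X$, so that $L = \r(E)$ for some entourage $E \in \E_X$. The key claim I would verify is $\Delta_L \subseteq E \circ E^{-1}$: indeed, for each $x \in L$ there is some $y \in X$ with $(x,y) \in E$, which gives $(x,y) \in E$ and $(y,x) \in E^{-1}$, hence $(x,x) \in E \circ E^{-1}$. Since $\E_X$ is closed under inverses and composition, $E \circ E^{-1} \in \E_X$, and since $\E_X$ is closed under subsets, $\Delta_L \in \E_X$.

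There is essentially no substantive obstacle: the entire argument is a direct unpacking of the definitions in Definition \ref{defn:coarse structure} and Definition \ref{defn:admiss collection ass to coarse structure}. The only small point worth being careful about is that I use $\r$ rather than $\s$ in the converse direction; an entirely symmetric argument using $\s(E) = L$ together with $E^{-1} \circ E$ would work identically, consistent with the stated equality $\{\r(E) \mid E \in \E_X\} = \{\s(E) \mid E \in \E_X\}$.
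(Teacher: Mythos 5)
Your proof is correct: both directions are exactly the straightforward unpacking of Definitions \ref{defn:coarse structure} and \ref{defn:admiss collection ass to coarse structure}, with the key containment $\Delta_L \subseteq E \circ E^{-1}$ doing the work in the nontrivial direction. The paper omits the proof as straightforward, and your argument is precisely the intended one.
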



To study the non-unital case, we need to consider a family of maps instead of a single map:

\begin{defn}\label{defn:admissible family}
Let $X$ be a set, $\L$ be an admissible collection in $\P(X)$ and $(Y,\E_Y)$ be a coarse space.
\begin{enumerate}
 \item We say that a family of maps $\F=\{f_L: L \to Y ~|~ L \in \L\}$ is \emph{admissible} if for any $L_1, L_2 \in \L$, the set $\{(f_{L_1}(x), f_{L_2}(x)) ~|~ x\in L_1 \cap L_2\}$ belongs to $\E_Y$.
 \item For an admissible family $\F=\{f_L: L \to Y ~|~ L \in \L\}$, denote
 \[
 \F^*\E_Y:=\{E \in X \times X ~|~ \exists~ L \in \L \text{ such that } E \subseteq L \times L \text{ and } (f_L \times f_L)(E) \in \E_Y\}.
 \]
\end{enumerate}
\end{defn}

We also need to consider the relation of closeness between families:
\begin{defn}\label{defn:closeness btw families}
Let $X$ be a set, $\L$ be an admissible collection in $\P(X)$ and $(Y,\E_Y)$ be a coarse space. Two admissible families $\F=\{f_L: L \to Y ~|~ L \in \L\}$ and $\G=\{g_{L} : L \to Y ~|~ L \in \L\}$ are called \emph{close} if for any $L \in \L$, the set $\{(f_L(x), g_L(x)) ~|~ x\in L\}$ belongs to $\E_Y$.
\end{defn}

\begin{lem}\label{lem:pullback}
Let $X$ be a set and $\L$ be an admissible collection in $\P(X)$. Let $(Y,\E_Y)$ be a coarse space, and $\F=\{f_L: L \to Y ~|~ L \in \L\}$ be an admissible family. Then
\begin{enumerate}
 \item For any $L \in \L$, we have $f_L(L) \in \L_Y$.
 \item $\F^*\E_Y$ is a coarse structure on $X$.
\end{enumerate}
\end{lem}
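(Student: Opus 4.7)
The plan for (1) is short: apply the admissibility of $\F$ with $L_1 = L_2 = L$ to see that $\Delta_{f_L(L)} = \{(f_L(x),f_L(x)) : x \in L\} \in \E_Y$, and then invoke Lemma \ref{lem:Delta_L belongs to E} to conclude $f_L(L) \in \L_Y$.

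For (2), I would verify the three axioms of Definition \ref{defn:coarse structure} one by one. Closure under subsets and inverses is immediate from the definition of $\F^*\E_Y$, since for $E \subseteq L \times L$ with $(f_L \times f_L)(E) \in \E_Y$, both $E' \subseteq E$ and $E^{-1}$ lie in $L \times L$, and their images under $f_L \times f_L$ are contained in (respectively equal to the inverse of) $(f_L \times f_L)(E)$. Any finite subset $F \subseteq X \times X$ is handled by taking $L := \r(F) \cup \s(F)$, which lies in $\L$ by admissibility (finite sets are in $\L$ and $\L$ is closed under finite unions), and noting that $(f_L \times f_L)(F)$ is finite, hence an entourage of $\E_Y$.

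The non-trivial steps are closure under finite unions and under products; the technical engine behind both is the following comparison. Given $L_0 \subseteq L$ in $\L$, admissibility of $\F$ applied to $L_0$ and $L$ yields $D_{L_0,L} := \{(f_L(x),f_{L_0}(x)) : x \in L_0\} \in \E_Y$, so for any $E \subseteq L_0 \times L_0$ one has the sandwich
\[
(f_L \times f_L)(E) \;\subseteq\; D_{L_0,L}^{-1} \circ (f_{L_0} \times f_{L_0})(E) \circ D_{L_0,L},
\]
which lies in $\E_Y$ whenever $(f_{L_0} \times f_{L_0})(E)$ does. Using this, if $E_i \in \F^*\E_Y$ is witnessed by $L_i \in \L$ for $i=1,2$, I set $L := L_1 \cup L_2 \in \L$, so that $E_1, E_2, E_1 \cup E_2 \subseteq L \times L$, and the sandwich above gives $(f_L \times f_L)(E_i) \in \E_Y$; closure of $\E_Y$ under unions then yields $E_1 \cup E_2 \in \F^*\E_Y$. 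For products, one additionally observes the set-theoretic inclusion $(f_L \times f_L)(E_1 \circ E_2) \subseteq (f_L \times f_L)(E_1) \circ (f_L \times f_L)(E_2)$ (insert the intermediate point via $f_L$), so $E_1 \circ E_2 \in \F^*\E_Y$ by closure of $\E_Y$ under products.

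The main obstacle, although mild, is exactly the merging step: the witnesses $L_1,L_2$ attached to $E_1,E_2$ are in general distinct, and one must promote to a common $L$ before comparing images. The admissibility hypothesis on $\F$ is precisely what allows this promotion, via the comparison entourage $D_{L_0,L}$; without it, nothing guarantees that $(f_L \times f_L)(E_i)$ stays in $\E_Y$. Once this observation is isolated as a small preparatory lemma, the rest of the axiom-verification reduces to routine bookkeeping.
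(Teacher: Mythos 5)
Your argument is essentially the paper's: part (1) is proved the same way (admissibility with $L_1=L_2=L$ plus Lemma \ref{lem:Delta_L belongs to E}), and for part (2) you merge the two witnesses via $L=L_1\cup L_2$ and conjugate by the comparison entourage supplied by admissibility, exactly as in the paper, which likewise treats the subset/inverse/finite-set axioms as routine. The one slip is the orientation of your sandwich: with the paper's convention $E\circ F=\{(x,z) \mid \exists\, y,\ (x,y)\in E,\ (y,z)\in F\}$ and $D_{L_0,L}=\{(f_L(x),f_{L_0}(x)) \mid x\in L_0\}$, the correct containment is $(f_L\times f_L)(E)\subseteq D_{L_0,L}\circ (f_{L_0}\times f_{L_0})(E)\circ D_{L_0,L}^{-1}$ rather than $D_{L_0,L}^{-1}\circ(f_{L_0}\times f_{L_0})(E)\circ D_{L_0,L}$; as written it can fail (take $E=\{(p,p)\}$ with $f_{L_0}(p)\neq f_L(p)$). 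Since $\E_Y$ is closed under inverses and products, the corrected conjugate still lies in $\E_Y$, so this is a cosmetic fix and the rest of your verification goes through unchanged.
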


\begin{proof}
(1). Since $\L$ is admissible, the set $\{(f_L(x), f_L(x)) ~|~ x\in L\} \in \E_Y$ for any $L\in \L$. Hence we obtain $f_L(L) \in \L_Y$.

(2). Given $E_1, E_2 \in \F^*\E_Y$, assume that there exist $L_1, L_2 \in \L$ such that $E_i \subseteq L_i \times L_i$ and $(f_{L_i} \times f_{L_i})(E_i) \in \E_Y$ for $i=1,2$. Take $L=L_1 \cup L_2 \in \L$, then we have $E_1 \cup E_2 \subseteq L \times L$ and $E_1 \circ E_2 \subseteq L \times L$. Note that $F_i:=\{(f_L(x), f_{L_i}(x)) ~|~ x\in L_i\} \in \E_Y$ for $i=1,2$. Hence we have
\[
(f_L \times f_L)(E_i) \subseteq F_i \circ (f_{L_i} \times f_{L_i})(E_i) \circ F_i^{-1} \in \E_Y
\]
for $i=1,2$, which implies that $(f_L \times f_L)(E_1 \cup E_2) \in \E_Y$. Also note that
\[
(f_L \times f_L)(E_1 \circ E_2) \subseteq (f_L \times f_L)(E_1) \circ (f_L \times f_L)(E_2) \in \E_Y.
\]
The rest is trivial, and hence we omit the details.
\end{proof}

Now we introduce the notion of coarse family, which is the replacement of coarse maps in the general case.

\begin{defn}\label{defn:coarse family}
Let $(X, \E_X), (Y, \E_Y)$ be coarse spaces and $\L$ be an admissible collection in $\P(X)$. A family of admissible maps $\F=\{f_L: L \to Y ~|~ L \in \L\}$ is called \emph{coarse} if $\E_X \subseteq \F^*\E_Y$.
\end{defn}

It is clear from the definition that for a coarse family $\F=\{f_L: L \to Y ~|~ L \in \L\}$, we have $\L_X \subseteq \L$. Note that we do not require $\L_X = \L$ in general.


\begin{defn}\label{defn:coarse corr ass to family}
Let $X$ be a set and $\L$ be an admissible collection in $\P(X)$. Let $(Y,\E_Y)$ be a coarse space and $\F=\{f_L: L \to Y ~|~ L \in \L\}$ be an admissible family. Denote
\[
\E_{YX}(\F):=\{E \subseteq Y \times X ~|~ \exists~ L \in \L \text{ such that } E \subseteq Y \times L \text{ and } (\Id_Y \times f_L)(E) \in \E_Y\},
\]
and
\[
\E(\F):=\{E_X \cup E_Y \cup E_{XY} \cup E_{YX} ~|~ E_X \in \F^*\E_Y, E_Y \in \E_Y \text{ and } E_{YX} \cup E_{XY}^{-1} \in \E_{YX}(\F)\}.
\]
\end{defn}

We record the following elementary observation, whose proof is straightforward and hence omitted.
\begin{lem}\label{lem:elem ob for pullback}
Given $X, \L, (Y,\E_Y)$ and $\F$ as in Definition \ref{defn:coarse corr ass to family}, we have the following:
\begin{enumerate}
 \item For any $E \in \E_{YX}(\F)$, there exist $L \in \L$ and $B \in \L_Y$ such that $E \subseteq B \times L$.
 \item For $L_1, L_2 \in \L$ and $E \subseteq Y \times X$, if $E \subseteq Y \times L_i$ for $i=1,2$ and $(\Id_Y \times f_{L_1})(E) \in \E_Y$, then $(\Id_Y \times f_{L_2})(E) \in \E_Y$.
\end{enumerate}
\end{lem}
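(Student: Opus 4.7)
The proof breaks cleanly into two essentially routine pieces, so my plan is just to unpack the relevant definitions and verify the conclusions directly.

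For part (1), the plan is to simply read off $L$ from the definition of $\E_{YX}(\F)$ and then produce $B$ as the first-coordinate projection of $E$. More precisely, given $E \in \E_{YX}(\F)$, Definition \ref{defn:coarse corr ass to family} furnishes some $L \in \L$ with $E \subseteq Y \times L$ and $F := (\Id_Y \times f_L)(E) \in \E_Y$. Since $\Id_Y$ is the identity on the first coordinate, we have $\r(E) = \r(F)$, and so by Definition \ref{defn:admiss collection ass to coarse structure} the set $B := \r(E) = \r(F)$ lies in $\L_Y$. By construction $E \subseteq B \times L$, which is what we want.

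For part (2), the plan is to exploit the admissibility of $\F$ (Definition \ref{defn:admissible family}(1)) to bridge between $f_{L_1}$ and $f_{L_2}$. Since $E \subseteq Y \times L_1$ and $E \subseteq Y \times L_2$, every point $(y,x) \in E$ satisfies $x \in L_1 \cap L_2$; so the admissible set
\[
D := \{(f_{L_1}(x), f_{L_2}(x)) ~|~ x \in L_1 \cap L_2\}
\]
lies in $\E_Y$. A direct check shows
\[
(\Id_Y \times f_{L_2})(E) \subseteq (\Id_Y \times f_{L_1})(E) \circ D,
\]
because if $(y,x) \in E$ then $(y, f_{L_1}(x)) \in (\Id_Y \times f_{L_1})(E)$ and $(f_{L_1}(x), f_{L_2}(x)) \in D$ compose to $(y, f_{L_2}(x))$. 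Since $(\Id_Y \times f_{L_1})(E) \in \E_Y$ by hypothesis and $\E_Y$ is closed under composition and passage to subsets (Definition \ref{defn:coarse structure}), this gives $(\Id_Y \times f_{L_2})(E) \in \E_Y$.

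There is no real obstacle in either part; the only mild care needed is to get the composition in part (2) in the correct order, since $D$ must be applied after $(\Id_Y \times f_{L_1})(E)$ in order to replace $f_{L_1}(x)$ by $f_{L_2}(x)$ in the second coordinate.
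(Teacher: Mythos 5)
Your proof is correct: part (1) follows by noting $\r(E)=\r\bigl((\Id_Y\times f_L)(E)\bigr)\in\L_Y$, and part (2) by composing $(\Id_Y\times f_{L_1})(E)$ with the admissibility entourage $\{(f_{L_1}(x),f_{L_2}(x))\mid x\in L_1\cap L_2\}$ in the correct order. This is exactly the routine verification the paper leaves to the reader ("whose proof is straightforward and hence omitted"), so there is nothing to add.
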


The following result is analogous to Lemma \ref{lem:facts for E(f)}.
\begin{lem}\label{lem:facts for E(F)}
Given $X, \L, (Y,\E_Y)$ and $\F$ as in Definition \ref{defn:coarse corr ass to family}, then $\E(\F)$ is a coarse structure on $X \sqcup Y$ generated by $\E(\F)|_{Y \times (X \sqcup Y)} = \E_Y \cup \E_{YX}(\F)$.
\end{lem}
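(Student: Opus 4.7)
The plan is to follow the proof of Lemma \ref{lem:facts for E(f)}, except that without a single map $h \colon X \sqcup Y \to Y$ realizing $\E(\F)$ as a pullback coarse structure, each coarse-structure axiom must be verified by hand. The central device is to \emph{amalgamate witnesses}: given finitely many members of $\F^*\E_Y$ or $\E_{YX}(\F)$ with witnesses $L_1, \ldots, L_n \in \L$, the union $L := L_1 \cup \cdots \cup L_n$ lies in $\L$ by admissibility and serves as a common witness for all of them. For $\E_{YX}(\F)$ this is exactly Lemma \ref{lem:elem ob for pullback}(2); for $\F^*\E_Y$ it follows analogously, by conjugating the entourage $(f_{L_i} \times f_{L_i})(E_i) \in \E_Y$ on either side by the ``transition entourage'' $\{(f_L(x), f_{L_i}(x)) \mid x \in L_i\} \in \E_Y$ supplied by admissibility of $\F$.

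Once this device is in place, verifying that $\E(\F)$ is a coarse structure on $X \sqcup Y$ is routine. Closure under subsets is inherited componentwise from $\F^*\E_Y$, $\E_Y$, and $\E_{YX}(\F)$; every finite subset of $(X \sqcup Y)^2$ lies in $\E(\F)$, because singletons of $X$ belong to $\L$ by admissibility and $\E_Y$ contains all finite sets; closure under inverse simply swaps the roles of the $X \times Y$ and $Y \times X$ components in the definition. For closure under finite union and composition, one amalgamates witnesses and decomposes the outcome into its four ``quadrants'' in $X \times X$, $Y \times Y$, $X \times Y$, $Y \times X$; each quadrant lands in the required piece by Lemma \ref{lem:pullback} together with routine identifications such as $\E_Y \circ \E_{YX}(\F) \subseteq \E_{YX}(\F)$ and $A^{-1} \circ B \in \F^*\E_Y$ whenever $A, B \in \E_{YX}(\F)$ share a common witness.

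For the identification $\E(\F)|_{Y \times (X \sqcup Y)} = \E_Y \cup \E_{YX}(\F)$, the inclusion $\supseteq$ is immediate by taking the three remaining components in the definition of $\E(\F)$ to be empty, and the reverse inclusion follows by restricting $E_X \cup E_Y \cup E_{XY} \cup E_{YX}$ to $Y \times (X \sqcup Y)$. For the ``generated by'' assertion, elements of $X \times Y$-type arise as inverses of elements of $\E_{YX}(\F)$, so it suffices to generate $\F^*\E_Y$ from $\E_{YX}(\F)$ via composition. Mirroring the proof of Lemma \ref{lem:facts for E(f)}, given $E \in \F^*\E_Y$ with witness $L$, set $E' := E \cup (E \circ E^{-1})$, which lies in $\F^*\E_Y$ with the same witness, and put $F := (f_L \times \Id_L)(E') \subseteq f_L(L) \times L$. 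Then $(\Id_Y \times f_L)(F) = (f_L \times f_L)(E') \in \E_Y$, so $F \in \E_{YX}(\F)$, and Lemma \ref{lem:easy containment} gives $E \subseteq (E')^{-1} \circ E' \subseteq F^{-1} \circ F$, as required.

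The main obstacle is purely bookkeeping: establishing the analogue of Lemma \ref{lem:elem ob for pullback}(2) for $\F^*\E_Y$ and verifying that witness-amalgamation propagates cleanly through every closure step. Both are direct consequences of admissibility of $\L$ and of $\F$, so no essentially new input is required beyond what has already been developed earlier in this section.
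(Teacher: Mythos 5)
Your proposal is correct, but it takes a different route from the paper for the part that needs real work, namely that $\E(\F)$ is a coarse structure. You verify the axioms quadrant by quadrant, re-deriving the witness-amalgamation/conjugation device ($L=L_1\cup\cdots\cup L_n$ plus the transition entourages $\{(f_L(x),f_{L_i}(x))\mid x\in L_i\}$) each time it is needed; note that for $\F^*\E_Y$ this device is exactly the content of Lemma \ref{lem:pullback}(2), which you could cite rather than reprove. The paper instead avoids all quadrant bookkeeping by observing that, although there is no single map $h\colon X\sqcup Y\to Y$, there is an admissible \emph{family} of such maps: it sets $\L':=\{A\sqcup B\mid A\in\L,\ B\in\L_Y\}$, defines $h_{A\sqcup B}$ to be $f_A$ on $A$ and $\Id_B$ on $B$, checks via Lemma \ref{lem:Delta_L belongs to E} that $\H=\{h_L\mid L\in\L'\}$ is admissible, and identifies $\E(\F)=\H^*\E_Y$ using Lemma \ref{lem:elem ob for pullback}; the coarse-structure property is then immediate from Lemma \ref{lem:pullback}(2). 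So the paper's argument is the family-version of the unital trick of extending $f$ to $h$ on $X\sqcup Y$, and it buys brevity and reuse of already-proved lemmas, while your direct verification buys nothing new but is self-contained and makes explicit the identifications (e.g.\ $A^{-1}\circ B\in\F^*\E_Y$ for $A,B\in\E_{YX}(\F)$ with a common witness) that the slicker proof hides. For the generation statement your argument --- passing to $E'=E\cup(E\circ E^{-1})$, setting $F=(f_L\times\Id)(E')\in\E_{YX}(\F)$ and invoking Lemma \ref{lem:easy containment} --- is precisely the adaptation of Lemma \ref{lem:facts for E(f)} that the paper has in mind when it omits the details, so there you agree with the paper.
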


\begin{proof}
Firstly, we show that $\E(\F)$ is a coarse structure on $X \sqcup Y$. Consider the admissible collection $\L':=\{A \sqcup B ~|~ A \in \L \text{ and } B \in \L_Y\}$ in $\P(X \sqcup Y)$. For any $L=A \sqcup B \in \L'$ with $A \in \L$ and $B \in \L_Y$, define a map $h_L: L \to Y$ which restricts to $f_A$ on $A$ and $\Id_B$ on $B$. Given $L_i=A_i \sqcup B_i$ with $A_i \in \L$ and $B_i \in \L_Y$ for $i=1,2$, we have
\[
\{(h_{L_1}(z), h_{L_2}(z)) ~|~ z\in L_1 \cap L_2\} = \{(f_{A_1}(x),f_{A_2}(x)) ~|~ x\in A_1 \cap A_2\} \cup \Delta_{B_1 \cap B_2},
\]
which belongs to $\E_Y$ due to Lemma \ref{lem:Delta_L belongs to E}. Hence $\H:=\{h_L ~|~L \in \L'\}$ is admissible.

Moreover, $E=E_X \cup E_Y \cup E_{XY} \cup E_{YX}$ belongs to $\mathcal{H}^*\E_Y$ if and only if there exists $L=A \sqcup B \in \L'$ with $A \in \L$ and $B \in \L_Y$ such that $E \subseteq L \times L$ and $(h_L \times h_L)(E) \in \E_Y$, which is equivalent to that $E \in \E(\F)$ thanks to Lemma \ref{lem:elem ob for pullback}. Hence we conclude that $\E(\F) = \mathcal{H}^*\E_Y$, which is a coarse structure on $X \sqcup Y$ due to Lemma \ref{lem:pullback}(2).

The rest of the proof is similar to that for Lemma \ref{lem:facts for E(f)}, and hence we omit the details.
\end{proof}

Consequently, we obtain the following:

\begin{cor}\label{cor:E(F) is coarse iff F is coarse}
Let $(X, \E_X), (Y, \E_Y)$ be coarse spaces, $\L$ be an admissible collection in $\P(X)$ and $\F=\{f_L: L \to Y ~|~ L \in \L\}$ be a family of admissible maps. Then $\F$ is coarse \emph{if and only if} $\E(\F)$ is a coarse correspondence. In this case, we say that $\E(\F)$ is the \emph{coarse correspondence associated to $\F$}.
\end{cor}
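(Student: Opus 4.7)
The plan is to deduce the corollary almost directly from Lemma \ref{lem:facts for E(F)} together with an unpacking of the definition of $\E(\F)$ on the diagonal blocks. Since Lemma \ref{lem:facts for E(F)} already establishes that $\E(\F)$ is a coarse structure on $X \sqcup Y$ generated by $\E(\F)|_{Y \times (X \sqcup Y)} = \E_Y \cup \E_{YX}(\F)$, the generation requirement in the definition of a coarse correspondence (condition (3) in the opening of Section \ref{sec:coarse corr}) holds automatically. Hence the only remaining content is to identify the restrictions of $\E(\F)$ to $Y \times Y$ and to $X \times X$.

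First I would compute $\E(\F)|_{Y \times Y}$. From the defining formula
\[
\E(\F)=\{E_X \cup E_Y \cup E_{XY} \cup E_{YX} ~|~ E_X \in \F^*\E_Y,\ E_Y \in \E_Y,\ E_{YX} \cup E_{XY}^{-1} \in \E_{YX}(\F)\},
\]
the intersection of a typical element with $Y \times Y$ is just the $E_Y$ component, so $\E(\F)|_{Y\times Y} \subseteq \E_Y$; the reverse inclusion is immediate by taking $E_X, E_{XY}, E_{YX}$ to be empty. Thus condition (1) for a coarse correspondence, namely $\E(\F)|_{Y\times Y} = \E_Y$, holds unconditionally. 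Similarly, intersecting with $X \times X$ extracts the $E_X$ component and gives $\E(\F)|_{X\times X} = \F^*\E_Y$.

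Now condition (2) for a coarse correspondence, $\E_X \subseteq \E(\F)|_{X \times X}$, becomes $\E_X \subseteq \F^*\E_Y$, which is precisely the definition of $\F$ being coarse (Definition \ref{defn:coarse family}). This gives both directions of the equivalence in one stroke: if $\F$ is coarse then all three requirements for a coarse correspondence are met, and conversely if $\E(\F)$ is a coarse correspondence then condition (2) forces $\E_X \subseteq \F^*\E_Y$, i.e.\ $\F$ is coarse.

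I do not anticipate a genuine obstacle here; the only subtlety is keeping the block decomposition $E = E_X \cup E_Y \cup E_{XY} \cup E_{YX}$ book-kept correctly when reading off $\E(\F)|_{X\times X}$ and $\E(\F)|_{Y\times Y}$ from Definition \ref{defn:coarse corr ass to family}. Once that is done, the corollary reduces to a direct comparison of definitions, and the proof can be written in just a few lines.
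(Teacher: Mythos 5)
Your proposal is correct and follows essentially the same route as the paper, which states the corollary as an immediate consequence of Lemma \ref{lem:facts for E(F)}: the generation condition comes from that lemma, while reading off the diagonal blocks gives $\E(\F)|_{Y\times Y}=\E_Y$ and $\E(\F)|_{X\times X}=\F^*\E_Y$, so the correspondence condition $\E_X\subseteq\E(\F)|_{X\times X}$ is exactly coarseness of $\F$.
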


Similar to Lemma \ref{lem:E(f) is generated by graphs}, we have the following:

\begin{lem}\label{lem:E(F) is generated by graphs}
Given $X, \L, (Y,\E_Y)$ and $\F$ as in Definition \ref{defn:coarse corr ass to family}, then $\E(\F)$ is generated by $\E_Y$ and $\{\Gr(f_L) ~|~L\in \L\}$.
\end{lem}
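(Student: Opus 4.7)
The plan is to follow the same template as the single-map case (Lemma \ref{lem:E(f) is generated by graphs}) while using Lemma \ref{lem:facts for E(F)} to reduce the statement to a question purely about $\E_{YX}(\F)$. By Lemma \ref{lem:facts for E(F)}, the coarse structure $\E(\F)$ is generated by $\E_Y \cup \E_{YX}(\F)$, so it suffices to show that $\E_{YX}(\F)$ is contained in the coarse structure on $X\sqcup Y$ generated by $\E_Y$ and the collection $\{\Gr(f_L) \mid L\in \L\}$, and conversely that each $\Gr(f_L)$ belongs to $\E_{YX}(\F)$.

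For the first (easier) direction, I would fix $L \in \L$ and observe that $\Gr(f_L) \subseteq Y \times L$ and
\[
(\Id_Y \times f_L)(\Gr(f_L)) = \{(f_L(x), f_L(x)) \mid x \in L\} = \Delta_{f_L(L)}.
\]
By Lemma \ref{lem:pullback}(1) we have $f_L(L) \in \L_Y$, and Lemma \ref{lem:Delta_L belongs to E} then gives $\Delta_{f_L(L)} \in \E_Y$. Hence $\Gr(f_L) \in \E_{YX}(\F)$, which puts $\Gr(f_L)$ inside $\E(\F)$ as required.

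For the converse direction, I would take an arbitrary $E \in \E_{YX}(\F)$ and fix some $L \in \L$ witnessing membership, i.e., $E \subseteq Y \times L$ with $(\Id_Y \times f_L)(E) \in \E_Y$. The same factorisation trick as in Lemma \ref{lem:E(f) is generated by graphs} applies verbatim:
\[
E \subseteq \bigl( E \circ \Gr(f_L)^{-1} \bigr) \circ \Gr(f_L),
\]
and a direct computation shows $E \circ \Gr(f_L)^{-1} = (\Id_Y \times f_L)(E) \in \E_Y$. Thus $E$ is contained in the composition of an element of $\E_Y$ with $\Gr(f_L)$, which places it in the coarse structure generated by $\E_Y$ and $\{\Gr(f_L) \mid L \in \L\}$. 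Combining the two directions concludes the proof.

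The argument is essentially bookkeeping, and I do not anticipate a genuine obstacle: the only place that could require extra care is confirming that the ``graph'' of $f_L$ really does belong to $\E_{YX}(\F)$, which is precisely where the admissibility of $\F$ (transmitted through $f_L(L) \in \L_Y$) is used. Once that is settled, the factorisation identity is formal.
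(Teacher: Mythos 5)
Your proof is correct and follows essentially the same route as the paper: both reduce via Lemma \ref{lem:facts for E(F)} to showing that each $\Gr(f_L)$ lies in $\E_{YX}(\F)$ (which you derive from admissibility through $f_L(L)\in\L_Y$, just as the paper does directly) and that any $E\in\E_{YX}(\F)$ satisfies $E \subseteq (E\circ\Gr(f_L)^{-1})\circ\Gr(f_L)$ with $E\circ\Gr(f_L)^{-1}\in\E_Y$.
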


\begin{proof}
Note that for any $L \in \L$, we have $\{(f_L(x), f_L(x)) ~|~ x\in L\} \in \E_Y$ and hence $\Gr(f_L) \in \E_{YX}(\F)$. Now given $E\in \E_{YX}(\F)$, there exists $L \in \L$ such that $E \subseteq Y \times L$ and $(\Id_Y \times f_L)(E) \in \E_Y$. Hence we have
$E \subseteq (E \circ \Gr(f_L)^{-1}) \circ \Gr(f_L)$, where $E \circ \Gr(f_L)^{-1} \in \E_Y$. Therefore we conclude the proof thanks to Lemma \ref{lem:facts for E(F)}.
\end{proof}

Now we are in the position to provide a detailed picture for coarse correspondence in the general case. This is a crucial step to achieve Proposition \ref{introprop:char for ce}.

\begin{prop}\label{thm:coarse corr general case}
Let $(X, \E_X), (Y, \E_Y)$ be coarse spaces, and $\E$ be a coarse correspondence from $X$ to $Y$. Then there exist an admissible collection $\L\subseteq \P(X)$ and an admissible family of maps $\F=\{f_L: L \to Y ~|~ L \in \L\}$ such that $\E = \E(\F)$. Moreover, such a family $\F$ is unique up to closeness.
\end{prop}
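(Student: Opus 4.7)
The plan is to generalise the proof of Proposition~\ref{prop:coarse corr unital case} by replacing the single map $f$ with a family indexed by a natural admissible collection extracted from $\E$ itself. I first set
\[
\L := \L(\E|_{X \times X}) = \{L \subseteq X : \Delta_L \in \E|_{X \times X}\},
\]
the admissible collection associated to the coarse space $(X, \E|_{X \times X})$ (the two descriptions agree by Lemma~\ref{lem:Delta_L belongs to E}); admissibility follows from the fact that $\E|_{X \times X}$ is a coarse structure closed under subsets and unions that contains every finite subset of $X \times X$. For each $L \in \L$, applying Lemma~\ref{lem:elementary facts for coarse corr} to $\Delta_L \in \E|_{X \times X}$ produces some $F_L \in \E|_{Y \times X}$ with $\Delta_L \subseteq F_L^{-1} \circ F_L$; hence for every $x \in L$ there is at least one $y \in Y$ with $(y, x) \in F_L$, and selecting one such $y$ defines $f_L(x)$. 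This yields a candidate family $\F = \{f_L : L \to Y \mid L \in \L\}$ whose graphs satisfy $\Gr(f_L) \subseteq F_L \in \E$.

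The next step is to verify that $\F$ is an admissible coarse family and that $\E(\F) = \E$. Admissibility follows from
\[
\{(f_{L_1}(x), f_{L_2}(x)) : x \in L_1 \cap L_2\} = \Gr(f_{L_1}) \circ \Gr(f_{L_2})^{-1} \in \E|_{Y \times Y} = \E_Y.
\]
For coarseness, given $E \in \E_X$ the set $L := \r(E) \cup \s(E)$ lies in $\L$ (since $\Delta_{\r(E)} \subseteq E \circ E^{-1}$ and $\Delta_{\s(E)} \subseteq E^{-1} \circ E$ are both in $\E_X$), and $(f_L \times f_L)(E) = \Gr(f_L) \circ E \circ \Gr(f_L)^{-1} \in \E_Y$. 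To establish $\E(\F) = \E$, Lemma~\ref{lem:E(F) is generated by graphs} identifies $\E(\F)$ as the coarse structure generated by $\E_Y \cup \{\Gr(f_L) : L \in \L\}$, all of which lie in $\E$, giving $\E(\F) \subseteq \E$. For the reverse inclusion, since $\E$ is generated by $\E|_{Y \times (X \sqcup Y)} = \E_Y \cup \E|_{Y \times X}$, it suffices to verify $\E|_{Y \times X} \subseteq \E_{YX}(\F)$: for $E \in \E|_{Y \times X}$, the inclusion $\Delta_{\s(E)} \subseteq E^{-1} \circ E \in \E|_{X \times X}$ yields $\s(E) \in \L$, and with $L := \s(E)$ one has $E \subseteq Y \times L$ together with $(\Id_Y \times f_L)(E) = E \circ \Gr(f_L)^{-1} \in \E_Y$.

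Uniqueness up to closeness is then immediate from the same graph calculation: if $\F' = \{f'_L\}_{L \in \L}$ is another family with $\E(\F') = \E$, then $\Gr(f_L), \Gr(f'_L) \in \E$ force $\{(f_L(x), f'_L(x)) : x \in L\} = \Gr(f_L) \circ \Gr(f'_L)^{-1} \in \E_Y$ for every $L$, which is the definition of closeness in \ref{defn:closeness btw families}. The principal obstacle is the selection of the correct admissible collection: the naive candidate $\L_X = \L(\E_X)$ is too small in general, because for $E \in \E|_{Y \times X}$ the set $\s(E)$ need not belong to $\L_X$ even though it does belong to $\L(\E|_{X \times X})$. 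One must therefore attach $\L$ to the restriction $\E|_{X \times X}$ of the correspondence itself rather than to the given $\E_X$; once this is done, the remaining steps mirror the unital argument.
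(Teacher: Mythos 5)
Your proof is correct and follows essentially the same route as the paper's: your collection $\L$ coincides with the paper's $\{\r(E) \mid E \in \E|_{X \times X}\}$ (via Lemma \ref{lem:Delta_L belongs to E}), the maps $f_L$ are extracted from Lemma \ref{lem:elementary facts for coarse corr} applied to $\Delta_L$ exactly as in the paper, and both inclusions $\E(\F) \subseteq \E$ and $\E \subseteq \E(\F)$ are established by the same graph-composition arguments together with Lemmas \ref{lem:facts for E(F)} and \ref{lem:E(F) is generated by graphs}. Your explicit uniqueness argument via $\Gr(f_L) \circ \Gr(f'_L)^{-1} \in \E_Y$ simply fills in a detail the paper omits, and your closing remark about why $\L(\E_X)$ would be too small matches the paper's choice of indexing collection.
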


\begin{proof}
Set $\L:=\{\r(E) ~|~ E \in \E|_{X \times X}\}$, which is clearly admissible. For any $L \in \L$, Lemma \ref{lem:Delta_L belongs to E} shows that $\Delta_L \in \E_Y$. By Lemma \ref{lem:elementary facts for coarse corr}, there exists $E_L \in \E|_{Y \times X}$ such that $\Delta_L \subseteq E_L^{-1} \circ E_L$. Hence there exists a map $f_L: L \to Y$ such that $E_L \supseteq \Gr(f_L)$. For $L_1, L_2 \in \L$, we have
\[
\{(f_{L_1}(x), f_{L_2}(x)) ~|~ x\in L_1 \cap L_2\} \subseteq \Gr(f_{L_1}) \circ \Gr(f_{L_2})^{-1} \subseteq E_{L_1} \circ E_{L_2}^{-1} \in \E_Y,
\]
which implies that the family $\F=\{f_L: L \to Y ~|~ L \in \L\}$ is admissible. Applying Lemma \ref{lem:E(F) is generated by graphs}, we obtain that $\E(\F) \subseteq \E$.

Conversely, we claim that $\L=\{\s(F) ~|~ F \in \E|_{Y \times X}\}$. In fact, for any $F \in \E|_{Y \times X}$, note that $F^{-1} \circ F \in \E|_{X \times X}$ and $\r(F^{-1} \circ F) = \s(F) \in \L$. For any $E \in \E|_{X \times X}$, Lemma \ref{lem:elementary facts for coarse corr} implies that there exists $F \in \E|_{Y \times X}$ such that $E \subseteq F^{-1} \circ F$, and hence $\r(E) \subseteq \r(F^{-1}) = \s(F)$. This concludes the claim. Now given $E \in \E|_{Y \times X}$, the claim implies that there exists $L \in \L$ such that $E \subseteq Y \times L$. Then $E \subseteq (E \circ \Gr(f_L)^{-1}) \circ \Gr(f_L) \in \E(\F)$. Hence we obtain that $\E=\E(\F)$.

The last statement is easy to prove, and hence we omit the details.
\end{proof}

Finally, we consider the notion of coarse equivalence from Definition \ref{introdefn:ce}. Recall that a \emph{coarse equivalence} between two coarse spaces $(X, \E_X)$ and $(Y,\E_Y)$ is a coarse correspondence $\E$ from both $X$ to $Y$ and $Y$ to $X$. We say that $(X, \E_X)$ and $(Y,\E_Y)$ are \emph{coarsely equivalent} if there exists a coarse equivalence between $X$ and $Y$.

%


Now we would like to apply Proposition \ref{thm:coarse corr general case} to unpack Definition \ref{introdefn:ce} using families of maps, and our main target is to prove Proposition \ref{introprop:char for ce}. To make it more clear, we divide into two parts.

\begin{cor}\label{cor:coarse equiv general case 1}
Let $(X, \E_X), (Y, \E_Y)$ be coarse spaces, and $\E$ be a coarse equivalence between them. Then there exist coarse families $\F=\{f_L: L \to Y ~|~ L \in \L_X\}$ and $\G=\{g_{L'}: L' \to X ~|~ L' \in \L_Y\}$ such that $\E = \E(\F) = \E(\G)$ and satisfy the following:
\begin{equation}\label{EQ:closeness 1}
  \{(g_{f_L(L)} \circ f_L(x), x) ~|~ x\in L\} \in \E_X \quad \text{for any} \quad L \in \L_X,
\end{equation}
  and
\begin{equation}\label{EQ:closeness 2}
 \{(f_{g_{L'}(L')} \circ g_{L'}(y), y) ~|~ y\in L'\} \in \E_Y \quad \text{for any} \quad L' \in \L_Y.
\end{equation}
 In this case, we have $\F^*\E_Y = \E_X$ and $\G^* \E_X = \E_Y$.
\end{cor}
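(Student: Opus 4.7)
The plan is to apply Proposition \ref{thm:coarse corr general case} twice, once viewing $\E$ as a coarse correspondence from $X$ to $Y$ and once from $Y$ to $X$, and then to extract the two closeness relations and the pullback identities by chasing graphs. First I would observe that since $\E$ is a coarse equivalence, being a coarse correspondence in both directions forces $\E|_{X\times X} = \E_X$ and $\E|_{Y\times Y} = \E_Y$ (the containment in one direction comes from one correspondence, the reverse containment from the other). This is a small but crucial point, since the admissible collection built in the proof of Proposition \ref{thm:coarse corr general case} is defined as $\{\r(E) ~|~ E \in \E|_{X \times X}\}$, which now coincides with $\L_X$ rather than some a priori larger collection.

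Applying Proposition \ref{thm:coarse corr general case} to $\E$ as a correspondence from $X$ to $Y$ produces a coarse family $\F=\{f_L: L \to Y ~|~ L \in \L_X\}$ with $\E = \E(\F)$ and, crucially, with $\Gr(f_L) \in \E|_{Y \times X}$ for every $L \in \L_X$ (this is how $f_L$ is extracted via Lemma \ref{lem:elementary facts for coarse corr}). Symmetrically, applying it to $\E$ as a correspondence from $Y$ to $X$ yields $\G=\{g_{L'}: L' \to X ~|~ L' \in \L_Y\}$ with $\E = \E(\G)$ and $\Gr(g_{L'}) \in \E|_{X \times Y}$ for every $L' \in \L_Y$.

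Next I would verify (\ref{EQ:closeness 1}). Fix $L \in \L_X$; by Lemma \ref{lem:pullback}(1), $f_L(L) \in \L_Y$, so $g_{f_L(L)}$ is defined, and a direct computation gives
\[
\Gr(g_{f_L(L)}) \circ \Gr(f_L) = \{(g_{f_L(L)} \circ f_L(x),\, x) ~|~ x \in L\}.
\]
Both factors lie in $\E$, hence the composite does; since it sits inside $X \times X$, it belongs to $\E|_{X \times X} = \E_X$. The assertion (\ref{EQ:closeness 2}) is proved in exactly the same way with the roles of $X$ and $Y$ swapped.

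Finally, for the pullback identities, the inclusion $\E_X \subseteq \F^* \E_Y$ is precisely the statement that $\F$ is coarse. For the reverse inclusion, inspection of the definition of $\E(\F)$ shows $\E(\F)|_{X \times X} = \F^* \E_Y$, and combining this with $\E(\F) = \E$ and $\E|_{X \times X} = \E_X$ gives $\F^* \E_Y = \E_X$; the identity $\G^* \E_X = \E_Y$ follows symmetrically. The only real obstacle is the one flagged in the first paragraph, namely confirming that both admissible collections produced by Proposition \ref{thm:coarse corr general case} really are the intrinsic $\L_X$ and $\L_Y$; everything else is a routine graph-composition computation inside the ambient $\E$.
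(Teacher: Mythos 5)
Your proposal is correct and follows essentially the route the paper intends: the paper omits the proof of this corollary, referring to the unital case (Corollary \ref{cor:coarse equiv unital case}) with Proposition \ref{thm:coarse corr general case} in place of Proposition \ref{prop:coarse corr unital case}, and your argument is exactly that — noting $\E|_{X\times X}=\E_X$ and $\E|_{Y\times Y}=\E_Y$ so the admissible collections are the intrinsic $\L_X,\L_Y$, and then obtaining (\ref{EQ:closeness 1}), (\ref{EQ:closeness 2}) and the pullback identities by composing graphs inside $\E$.
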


The proof is similar to that for Corollary \ref{cor:coarse equiv unital case} using Proposition \ref{thm:coarse corr general case} instead, and hence we omit the proof.

\begin{cor}\label{cor:coarse equiv general case 2}
Let $(X, \E_X), (Y, \E_Y)$ be coarse spaces, and $\F=\{f_L: L \to Y ~|~ L \in \L_X\}$, $\G=\{g_{L'}: L' \to X ~|~ L' \in \L_Y\}$ be coarse families satisfying (\ref{EQ:closeness 1}) and (\ref{EQ:closeness 2}). Then $\F^*\E_Y = \E_X, \G^* \E_X = \E_Y$ and $\E(\F) = \E(\G)$ is a coarse equivalence between $X$ and $Y$.
\end{cor}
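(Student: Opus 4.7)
The plan is to establish the three claims in the statement separately: first $\F^*\E_Y = \E_X$ and $\G^*\E_X = \E_Y$, then $\E(\F) = \E(\G)$, and finally that this common coarse structure is a coarse equivalence. For the pullback equalities, the inclusions $\E_X \subseteq \F^*\E_Y$ and $\E_Y \subseteq \G^*\E_X$ are exactly the coarseness hypotheses on $\F$ and $\G$ (Definition \ref{defn:coarse family}). For the reverse inclusion $\F^*\E_Y \subseteq \E_X$, I would take $E \in \F^*\E_Y$ witnessed by some $L \in \L_X$ with $E \subseteq L \times L$ and $F := (f_L \times f_L)(E) \in \E_Y$. By Lemma \ref{lem:pullback}(1), $f_L(L) \in \L_Y$ and $F \subseteq f_L(L) \times f_L(L)$, so the coarseness of $\G$ applied to $F$ gives $(g_{f_L(L)} \times g_{f_L(L)})(F) \in \E_X$. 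Writing $H_L$ for the entourage appearing in (\ref{EQ:closeness 1}), a direct verification then shows $E \subseteq H_L^{-1} \circ (g_{f_L(L)} \times g_{f_L(L)})(F) \circ H_L \in \E_X$. The inclusion $\G^*\E_X \subseteq \E_Y$ follows by the symmetric argument using (\ref{EQ:closeness 2}).

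Next I would show $\E(\F) = \E(\G)$ by comparing generators. Lemma \ref{lem:E(F) is generated by graphs} asserts that $\E(\F)$ is generated by $\E_Y \cup \{\Gr(f_L) : L \in \L_X\}$, so it is enough to check that each of these lies in $\E(\G)$. The inclusion $\E_Y \subseteq \E(\G)$ is automatic since $\E(\G)|_{Y \times Y} = \G^*\E_X \supseteq \E_Y$. For $\Gr(f_L) \in \E(\G)$, it suffices to verify $\Gr(f_L)^{-1} \in \E_{XY}(\G)$: taking $L' := f_L(L) \in \L_Y$, one has $\Gr(f_L)^{-1} \subseteq X \times L'$ and
\[
(\Id_X \times g_{L'})(\Gr(f_L)^{-1}) = \{(x, g_{f_L(L)} \circ f_L(x)) \mid x \in L\},
\]
which is the inverse of the entourage from (\ref{EQ:closeness 1}) and therefore lies in $\E_X$. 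The reverse inclusion $\E(\G) \subseteq \E(\F)$ follows by the symmetric argument using (\ref{EQ:closeness 2}).

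For the final assertion, since $\F$ and $\G$ are coarse, Corollary \ref{cor:E(F) is coarse iff F is coarse} yields that $\E(\F)$ is a coarse correspondence from $X$ to $Y$ and $\E(\G)$ is a coarse correspondence from $Y$ to $X$. Since they coincide, their common value is a coarse equivalence in the sense of Definition \ref{introdefn:ce}.

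The only (mild) obstacle is identifying the correct elementary entourage calculation: both the inclusion $\F^*\E_Y \subseteq \E_X$ and the inclusion $\Gr(f_L)^{-1} \in \E_{XY}(\G)$ are powered by the same interpolation through the composite $g_{f_L(L)} \circ f_L$ supplied by (\ref{EQ:closeness 1}). Once this detour through the opposite family is set up, the remainder of the proof is routine bookkeeping against the definitions of $\E(\F)$, $\E(\G)$, and their generating sets.
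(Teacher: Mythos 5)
Your proof is correct, and the first half is exactly the paper's argument: the equality $\F^*\E_Y=\E_X$ is obtained by pushing $(f_L\times f_L)(E)$ through the opposite family and sandwiching with the entourage $H_L$ from (\ref{EQ:closeness 1}), i.e.\ $E\subseteq H_L^{-1}\circ (g_{f_L(L)}\times g_{f_L(L)})\big((f_L\times f_L)(E)\big)\circ H_L$, which is precisely the paper's displayed inclusion (and your implicit use of admissibility of $\G$ to pass to the index $f_L(L)$ is at the same level of detail as the paper's). Where you diverge is the identification $\E(\F)=\E(\G)$: the paper observes that, the diagonal blocks being equal by the first step, it suffices to prove the block identity $\E_{YX}(\F)=\E_{XY}(\G)^{-1}$, and it verifies this for an \emph{arbitrary} $E\in\E_{YX}(\F)$ by choosing $L'=B\cup f_L(L)$ and composing three entourages $F_1\circ F'\circ F_2$; you instead invoke Lemma \ref{lem:E(F) is generated by graphs} to reduce to the generators $\E_Y\cup\{\Gr(f_L)\}$ and check only that $\Gr(f_L)^{-1}\in\E_{XY}(\G)$, which is a one-line computation with (\ref{EQ:closeness 1}) since $(\Id_X\times g_{f_L(L)})(\Gr(f_L)^{-1})=H_L^{-1}$. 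Your route is leaner because the generation lemma absorbs the bookkeeping for general off-diagonal entourages, while the paper's route yields the explicit block equality $\E_{YX}(\F)=\E_{XY}(\G)^{-1}$ (marginally more information) without re-invoking that lemma; you also make explicit, via Corollary \ref{cor:E(F) is coarse iff F is coarse} and Definition \ref{introdefn:ce}, the final step that the common structure is a coarse equivalence, which the paper leaves implicit. Both arguments are complete and rest on the same ingredients.
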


\begin{proof}
Firstly, we show that $\F^*\E_Y = \E_X$. Since $\F$ is a coarse family, we have $\E_X \subseteq \F^*\E_Y$. Conversely given $E \in \F^*\E_Y$, there exists $L \in \L_X$ such that $E \subseteq L \times L$ and $(f_L \times f_L)(E) \in \E_Y$. Take $L':=f_L(L)$, which belongs to $\L_Y$ by Lemma \ref{lem:pullback}. Then $\G$ being coarse implies:
\[
F:=\{(g_{L'}\circ f_L(x), g_{L'}\circ f_L(y)) ~|~ (x,y) \in E\} = (g_{L'} \times g_{L'}) \circ (f_L \times f_L) (E) \in \E_X.
\]
Hence
\[
E \subseteq \{(g_{L'} \circ f_L(x), x) ~|~ x\in L\}^{-1} \circ F \circ \{(g_{L'} \circ f_L(y), y) ~|~ y\in L\} \in \E_X,
\]
which concludes that $\F^*\E_Y = \E_X$. Similarly, we have $\G^* \E_X = \E_Y$.

To see $\E(\F) = \E(\G)$, it suffices to show that $\E_{YX}(\F) = \E_{XY}(\G)^{-1}$. For $E \in \E_{YX}(\F)$, Lemma \ref{lem:elem ob for pullback}(1) shows that there exist $B \in L_Y$ and $L \in \L_X$ such that $E \subseteq L \times B$ and $(\Id_Y \times f_L)(E) \in \E_Y$. Taking $L':= B \cup f_L(L) \in \L_Y$, we have
\[
F':=\{(g_{L'}(y), g_{L'}f_L(x)) ~|~ (y,x) \in E\} = (g_{L'} \times g_{L'})\circ (\Id_Y \times f_L)(E)\in \E_X.
\]
Note that
\[
(g_B \times \Id_X)(E) \subseteq F_1 \circ F' \circ F_2,
\]
where $F_1:=\{(g_B(y), g_{L'}(y)) ~|~ y\in B\}$ and $F_2:=\{(g_{L'}f_L(x), x) ~|~ x\in L\}$. Since $\G$ is admissible, then $F_1 \in \E_X$. Also (\ref{EQ:closeness 2}) implies that $\{(g_{f(L)}f_L(x), x) ~|~ x\in L\} \in \E_X$. By Lemma \ref{lem:elem ob for pullback}, we obtain $F_2 \in \E_X$. Hence $(g_B \times \Id_X)(E) \in \E_X$, which means that $E \in \E_{XY}(\G)^{-1}$. Similarly, we have $\E_{XY}(\G) \subseteq \E_{YX}(\F)^{-1}$, which concludes the proof.
\end{proof}

Motivated by Corollary \ref{cor:coarse equiv general case 1} and Corollary \ref{cor:coarse equiv general case 2}, we introduce the following:

\begin{defn}\label{defn:coarse equiv using families}
Let $(X, \E_X), (Y,\E_Y)$ be coarse spaces, and $\L_X, \L_Y$ be the associated admissible families. A coarse family $\F=\{f_L: L \to Y ~|~ L \in \L_X\}$ is called a \emph{coarse equivalence} if there exists a coarse family $\G=\{g_{L'}: L' \to X ~|~ L' \in \L_Y\}$ (which is called a \emph{coarse inverse} to $\F$) satisfying (\ref{EQ:closeness 1}) and (\ref{EQ:closeness 2}).
\end{defn}

Consequently, Corollary \ref{cor:coarse equiv general case 1} and Corollary \ref{cor:coarse equiv general case 2} can be rewritten in the following form, which concludes Proposition \ref{introprop:char for ce}.

\begin{cor}\label{cor:coarse equiv general case}
Let $(X, \E_X)$ and $(Y, \E_Y)$ be coarse spaces. Then there exists a coarse equivalence $\E$ between $X$ and $Y$ in the sense of Definition \ref{introdefn:ce} \emph{if and only if} there exists a coarse equivalence $\F$ in the sense of Definition \ref{defn:coarse equiv using families}.
\end{cor}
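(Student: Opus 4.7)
The plan is to observe that this corollary is essentially a direct repackaging of Corollary \ref{cor:coarse equiv general case 1} and Corollary \ref{cor:coarse equiv general case 2}, so the proof will be very short and amount to verifying that the conclusions of those corollaries match, on the nose, the two directions of the stated equivalence.

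For the forward implication, I would start from a coarse equivalence $\E$ on $X\sqcup Y$ in the sense of Definition \ref{introdefn:ce}. By definition $\E$ is a coarse correspondence both from $X$ to $Y$ and from $Y$ to $X$. Applying Proposition \ref{thm:coarse corr general case} in each direction (this is what Corollary \ref{cor:coarse equiv general case 1} packages for us), we obtain admissible coarse families $\F=\{f_L:L\to Y\mid L\in\L_X\}$ and $\G=\{g_{L'}:L'\to X\mid L'\in\L_Y\}$ with $\E(\F)=\E=\E(\G)$, together with the closeness conditions \eqref{EQ:closeness 1} and \eqref{EQ:closeness 2}. This is exactly the data required for $\F$ to be a coarse equivalence in the sense of Definition \ref{defn:coarse equiv using families}, with $\G$ as a coarse inverse.

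For the backward implication, I would take $\F$ a coarse equivalence in the sense of Definition \ref{defn:coarse equiv using families} with coarse inverse $\G$. Then $\F$ and $\G$ are coarse families satisfying \eqref{EQ:closeness 1} and \eqref{EQ:closeness 2} by assumption, so Corollary \ref{cor:coarse equiv general case 2} applies and yields that $\E(\F)=\E(\G)$ is a coarse equivalence between $X$ and $Y$ in the sense of Definition \ref{introdefn:ce}.

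Since both directions are just invocations of the already proven corollaries, there is no real obstacle here; the only thing to double-check is that the admissible collection indexing $\F$ (respectively $\G$) is exactly $\L_X$ (respectively $\L_Y$), which is ensured by Corollary \ref{cor:coarse equiv general case 1} and built into Definition \ref{defn:coarse equiv using families}. Hence the proof can be written in one or two sentences, and the statement genuinely deserves the label \emph{Corollary}.
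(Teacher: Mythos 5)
Your proposal is correct and matches the paper exactly: the paper derives this corollary directly by rewriting Corollary \ref{cor:coarse equiv general case 1} (for the forward direction) and Corollary \ref{cor:coarse equiv general case 2} (for the converse), which is precisely your two-step invocation. Nothing further is needed.
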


Finally, we discuss the condition of bounded geometry, which will be used in the next section.

\begin{lem}\label{lem:bdd geom}
Let $(X, \E_X)$ and $(Y,\E_Y)$ be coarse spaces of bounded geometry, and $\E$ be a coarse equivalence between them. Then $\E$ also has bounded geometry as a coarse structure on $X \sqcup Y$.
\end{lem}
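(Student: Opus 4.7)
The plan is to reduce bounded geometry of $\E$ on $X \sqcup Y$ to the bounded geometry of $\E_X$ and $\E_Y$, together with a uniform finite-to-one control on the maps supplied by Corollary \ref{cor:coarse equiv general case 1}. First I would fix coarse families $\F = \{f_L : L \to Y ~|~ L \in \L_X\}$ and $\G = \{g_{L'} : L' \to X ~|~ L' \in \L_Y\}$ satisfying (\ref{EQ:closeness 1}) and (\ref{EQ:closeness 2}) with $\E = \E(\F)$ and $\F^*\E_Y = \E_X$. Given an arbitrary $E \in \E$, Definition \ref{defn:coarse corr ass to family} lets me decompose it as $E = E_X \cup E_Y \cup E_{XY} \cup E_{YX}$ with $E_X \in \F^*\E_Y = \E_X$, $E_Y \in \E_Y$, and $F := E_{YX} \cup E_{XY}^{-1} \in \E_{YX}(\F)$, so that some $L \in \L_X$ witnesses $F \subseteq Y \times L$ and $\tilde F := (\Id_Y \times f_L)(F) \in \E_Y$.

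To bound $n(E)$, every $E$-neighbourhood of a point in $X \sqcup Y$ splits into contributions from these four pieces, so it suffices to bound each uniformly. The pieces $E_X$ and $E_Y$ are immediately controlled by the bounded geometry of $(X, \E_X)$ and $(Y, \E_Y)$. For the cross pieces, observe that whenever $(y,x) \in F$ one has $f_L(x) \in \Nd_{\tilde F}(\{y\})$; this already yields $|\Nd_{F^{-1}}(\{x\})| \leq n(\tilde F)$ for $x \in L$, and reduces the bound on $|\Nd_F(\{y\})|$ to a uniform bound on the cardinality of the fibres of $f_L$.

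The crux is therefore a uniform fibre bound for $f_L$, and this is where I would invoke condition (\ref{EQ:closeness 1}): the set $E' := \{(g_{f_L(L)} \circ f_L(x), x) ~|~ x \in L\}$ belongs to $\E_X$, and for any $y' \in f_L(L)$ every preimage $x \in f_L^{-1}(y')$ satisfies $(g_{f_L(L)}(y'), x) \in E'$. Hence $f_L^{-1}(y')$ is contained in a suitable $E'$-neighbourhood of the single point $g_{f_L(L)}(y')$, whose cardinality is at most $n(E') < \infty$. Combining this with the previous step gives $|\Nd_F(\{y\})| \leq n(E') \cdot n(\tilde F)$, and assembling the four pieces yields $n(E) < \infty$. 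The main obstacle is precisely this uniform finite-to-one property of $f_L$: it is the point where one genuinely uses that $\E$ is a two-sided coarse equivalence — so that the coarse inverse $\G$, and in particular the map $g_{f_L(L)}$, is available — rather than merely a one-sided coarse correspondence.
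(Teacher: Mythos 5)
Your argument is correct, up to harmless slips in the direction convention for $\Nd$ (e.g.\ $(y,x)\in F$ gives $y\in\Nd_{\tilde F}(\{f_L(x)\})$ rather than $f_L(x)\in\Nd_{\tilde F}(\{y\})$, and the fibre $\{y~|~(y,x)\in F\}$ is $\Nd_F(\{x\})\setminus\{x\}$ rather than an $F^{-1}$-neighbourhood); since $n(\cdot)$ takes both an entourage and its inverse into account, these do not affect any of the bounds. However, your route is genuinely different from, and heavier than, the paper's. The paper never invokes the family description $\E=\E(\F)$ from Corollary \ref{cor:coarse equiv general case 1}: after the same reduction to a cross entourage $F\in\E|_{X\times Y}$, it simply observes that $F^{-1}\circ F=\bigcup_{x\in X}(F_x\times F_x)$ and $F\circ F^{-1}=\bigcup_{y\in Y}(F^y\times F^y)$, where $F_x$ and $F^y$ are the two kinds of fibres of $F$, whence $\sup_x|F_x|\le n(F^{-1}\circ F)$ and $\sup_y|F^y|\le n(F\circ F^{-1})$; these compositions lie in $\E|_{Y\times Y}=\E_Y$ and $\E|_{X\times X}=\E_X$, which have bounded geometry. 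In other words, the uniform finite-to-one control on $f_L$ that you extract from the coarse inverse family and condition (\ref{EQ:closeness 1}) is exactly what the single composition $F\circ F^{-1}$ delivers for free, with no need for Proposition \ref{thm:coarse corr general case} or Corollary \ref{cor:coarse equiv general case 1}; the paper's proof uses only the coarse-structure axioms together with the fact that a two-sided coarse equivalence restricts to $\E_X$ on $X\times X$ and to $\E_Y$ on $Y\times Y$. Note that both proofs use two-sidedness at the same conceptual point: for a one-sided correspondence from $X$ to $Y$ one only knows $\E|_{X\times X}\supseteq\E_X$, so neither $F\circ F^{-1}$ nor your fibre bound for $f_L$ would be controlled. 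Your approach is valid and makes the role of the inverse family explicit, but it buys nothing extra here at the cost of relying on the structural results of Section \ref{sec:coarse corr}.
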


\begin{proof}
For $F \subseteq X \times Y$, $x\in X$ and $y\in Y$, denote
\[
F_x:=\{y\in Y ~|~ (x,y) \in F\} \quad \text{and} \quad F^y:=\{x\in X ~|~ (x,y) \in F \}.
\]
It suffices to show that for $F \in \E|_{X \times Y}$, the number $\sup_{x\in X} |F_x|$ and $\sup_{y\in Y} |F^y|$ are finite.
Note that
\[
F^{-1} \circ F = \bigcup_{x\in X} (F_x \times F_x)\quad \text{and} \quad F \circ F^{-1} = \bigcup_{y\in Y} (F^y \times F^y).
\]
Hence we have
\[
\sup_{x\in X} |F_x| \leq n(F^{-1} \circ F) \quad \text{and} \quad \sup_{y\in Y} |F^y| \leq n(F \circ F^{-1}).
\]
Note that $F^{-1} \circ F \in \E_X$ and $F \circ F^{-1} \in \E_Y$. Hence we conclude the proof since $\E_X$ and $\E_Y$ have bounded geometry.
\end{proof}

\section{Coarse equivalences induce Morita equivalences}\label{sec:morita for geometric}

In this section, we recall the known result that a coarse equivalence between coarse spaces induces a Morita equivalence between the associated uniform Roe algebras. As a special case, we obtain the proof for Theorem \ref{introthm} ``(2) $\Rightarrow$ (3)''. Here we provide a detailed proof since similar idea will be used later to treat ghostly ideals (see Section \ref{sec:ghostly ideals}).

Firstly, let us recall the following notion of Morita equivalence for $C^*$-algebras due to Rieffel (\cite{Rie82}, see also \cite[Definition 2.5.2]{MR3618901}).

\begin{defn}\label{defn:Morita equiv}
Let $A$ and $B$ be $C^*$-algebras. An \emph{$A$-$B$ imprimitivity bimodule} $\M$ is a Banach space that carries the structure of both a right Hilbert $B$-module with $B$-inner product $\langle \cdot, \cdot \rangle_B$ and a left Hilbert $A$-module with $A$-inner product $_A\langle \cdot, \cdot \rangle$ satisfying the following:
\begin{enumerate}
 \item Both inner products on $\M$ are \emph{full}, \emph{i.e.}, $_A{\langle \M, \M \rangle}= A$ and $\langle \M, \M \rangle_B = B$;
 \item $_A{\langle \xi, \eta \rangle} \cdot \zeta = \xi \cdot \langle \eta, \zeta \rangle_B$ for all $\xi,\eta,\zeta \in \M$;
 \item The actions of $A$ and $B$ on $\M$ commutes.
\end{enumerate}
Say that $A$ and $B$ are \emph{Morita equivalent} if such an $A$-$B$ imprimitivity bimodule $\M$ exists.
\end{defn}

Also recall from Section \ref{sec:intro} that two $C^*$-algebras $A$ and $B$ are \emph{stably isomorphic} if $A \otimes \K(\H)$ is $\ast$-isomorphic to $B \otimes \K(\H)$, where $\H$ is a separable infinite dimensional Hilbert space. It was proved in \cite{BGR77} (see also \cite[Chapter 7]{Lan95}) that if $A$ and $B$ are $\sigma$-unital (\emph{i.e.}, they admit countable approximate units), then $A$ and $B$ are Morita equivalent \emph{if and only if} they are stably isomorphic.

Let us restate Theorem \ref{introthm} ``(2) $\Rightarrow$ (3)'' in a more general form as follows:

\begin{prop}\label{prop:ce induces me}
Let $(X, \E_X)$ and $(Y, \E_Y)$ be coarse spaces of bounded geometry which are coarsely equivalent. Then the uniform Roe algebras $C^*_u(X, \E_X)$ and $C^*_u(Y, \E_Y)$ are Morita equivalent. Additionally if $\E_X$ and $\E_Y$ are unital, then $C^*_u(X, \E_X)$ and $C^*_u(Y, \E_Y)$ are stably isomorphic.
\end{prop}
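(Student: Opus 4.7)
My plan is to realise the imprimitivity bimodule geometrically from the coarse equivalence on the disjoint union $X\sqcup Y$. Since $(X,\E_X)$ and $(Y,\E_Y)$ are coarsely equivalent, Corollary~\ref{cor:coarse equiv general case 1} provides coarse families $\F=\{f_L:L\to Y~|~L\in\L_X\}$ and $\G=\{g_{L'}:L'\to X~|~L'\in\L_Y\}$ together with a coarse structure $\E=\E(\F)=\E(\G)$ on $X\sqcup Y$ satisfying $\E|_{X\times X}=\E_X$ and $\E|_{Y\times Y}=\E_Y$. Lemma~\ref{lem:bdd geom} guarantees that $\E$ has bounded geometry, so $B:=C^*_u(X\sqcup Y,\E)\subseteq \B(\ell^2(X)\oplus\ell^2(Y))$ is a well-defined $C^*$-algebra.

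Inside $B$, I would let $A_X$, $A_Y$ and $\M$ denote the norm-closures of those elements of $\CC_u[X\sqcup Y,\E]$ whose supports lie in $X\times X$, $Y\times Y$ and $X\times Y$, respectively. The identifications of entourages yield $\ast$-isomorphisms $A_X\cong C^*_u(X,\E_X)$ and $A_Y\cong C^*_u(Y,\E_Y)$, while operator multiplication equips $\M$ with commuting left $A_X$- and right $A_Y$-actions together with inner products $_{A_X}\langle\xi,\eta\rangle:=\xi\eta^*$ and $\langle\xi,\eta\rangle_{A_Y}:=\xi^*\eta$. Conditions (2) and (3) of Definition~\ref{defn:Morita equiv} reduce to associativity of composition, so the substantive task is fullness.

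The main step will be to show $\overline{\mathrm{span}}\{\xi^*\eta:\xi,\eta\in\M\}=A_Y$ (the left inner product is symmetric via $\F$). For each $L'\in\L_Y$, Lemma~\ref{lem:E(F) is generated by graphs} places $\Gr(g_{L'})\subseteq X\times Y$ inside $\E$, and $\Gr(g_{L'})^{-1}\circ\Gr(g_{L'})=\{(y,y')\in L'\times L':g_{L'}(y)=g_{L'}(y')\}$ also lies in $\E$. Bounded geometry of $\E$ therefore bounds every fibre of $g_{L'}$ by some $N=N(L')$, and a rainbow colouring partitions $L'=L'_1\sqcup\cdots\sqcup L'_N$ so that $g_{L'}$ is injective on each $L'_k$. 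The partial isometry $W_k:=\sum_{y\in L'_k}e_{g_{L'}(y),y}$ has support $\Gr(g_{L'}|_{L'_k})\in\E$, hence $W_k\in\M$, and injectivity gives $W_k^*W_k=\chi_{L'_k}$. Summing, $\chi_{L'}=\sum_{k=1}^{N}\langle W_k,W_k\rangle_{A_Y}$. Since $\M\cdot A_Y\subseteq\M$, for any $a\in A_Y$ one then has $\chi_{L'}a=\sum_{k=1}^{N}\langle W_k,W_ka\rangle_{A_Y}$, and Lemma~\ref{lem:app unit} yields $a=\lim_{L'}\chi_{L'}a\in\overline{\mathrm{span}}\langle\M,\M\rangle_{A_Y}$, completing fullness.

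The anticipated obstacle is precisely the rainbow colouring: without bounded geometry of $\E$ on $X\sqcup Y$ the fibres of $g_{L'}$ need not be uniformly finite, and the naive partial isometry $W_{L'}:=\sum_{y\in L'}e_{g_{L'}(y),y}$ fails to satisfy $W_{L'}^*W_{L'}=\chi_{L'}$ unless $g_{L'}$ is injective; Lemma~\ref{lem:bdd geom} is exactly what overcomes this. For the final clause, if $\E_X$ and $\E_Y$ are both unital then $\Delta_{X\sqcup Y}\in\E$, so $\chi_X,\chi_Y\in B$ and $A_X=\chi_XB\chi_X$, $A_Y=\chi_YB\chi_Y$ become unital full corners; being $\sigma$-unital, their Morita equivalence upgrades to stable isomorphism by the Brown--Green--Rieffel theorem \cite{BGR77}.
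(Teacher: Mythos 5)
Your proposal is correct and takes essentially the same approach as the paper: the same bimodule of operators in $\CC_u[X\sqcup Y,\E]$ supported in $X\times Y$, the same inner products $\xi\eta^*$ and $\xi^*\eta$, fullness obtained by decomposing graphs of the maps coming from the coarse equivalence into injective pieces to produce partial isometries, and Brown--Green--Rieffel for the unital ($\sigma$-unital) upgrade. The only cosmetic differences are that you source the maps from the coarse families of Corollary \ref{cor:coarse equiv general case 1} and finish with the approximate unit $\{\chi_{L'}\}_{L'\in\L_Y}$ (Lemma \ref{lem:app unit} is literally stated for metric-space ideals, but its one-line argument covers general bounded geometry coarse spaces), whereas the paper extracts a map via Lemma \ref{lem:elementary facts for coarse corr} adapted to $\supp(a)$ for each $a$ in the dense subalgebra.
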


This can be proved essentially by combining \cite[Corollary 3.6]{STY02} and \cite[Theorem 2.8]{MRW87} (where second countability is assumed) using the notion of Morita equivalence for groupoids.  Also note that the metric space case was proved directly in \cite[Theorem 4]{BNW07}. For the reader's convenience, here we provide a direct proof for the general case.

\begin{proof}[Proof of Proposition \ref{prop:ce induces me}]
Let $\E$ be a coarse equivalence between $(X, \E_X)$ and $(Y, \E_Y)$, which is a coarse structure on $X \sqcup Y$. For simplicity, denote $A:=C^*_u(X, \E_X)$ and $B:=C^*_u(Y, \E_Y)$. Define $\mathcal{M}$ to be the norm closure of
$$\{T \in \CC_u[X \sqcup Y, \E] ~|~ \supp(T) \subseteq X \times Y\}$$
in $\B(\ell^2(X \sqcup Y))$, which is a left $A$-module as well as a right $B$-module under composition of operators. Also define the inner products
\[
_A \langle \cdot, \cdot \rangle : \M \times \M \longrightarrow A \quad \text{by} \quad _A \langle T, S \rangle:= TS^* \quad  \text{for} \quad T,S \in \M,
\]
and
\[
 \langle \cdot, \cdot \rangle_B : \M \times \M \longrightarrow B \quad \text{by} \quad \langle T, S \rangle_B:= T^*S \quad  \text{for} \quad T,S \in \M.
\]
It is straightforward to check that $\M$ is a right Hilbert $B$-module as well as a left Hilbert $A$-module with commuting actions of $A$ and $B$ on $\M$. Moreover, we have
\[
_A{\langle T, S \rangle} \cdot R = (TS^*)R = T(S^*R) =  T \cdot \langle S, R \rangle_B
\]
for all $T,S,R \in \M$.

It remains to show that both inner products are full. Given
$a \in \CC_u[X, \E_X]$, denote $E:=\supp(a)$. It follows from Lemma
\ref{lem:elementary facts for coarse corr} that there exists
$F \in \E|_{X \times Y}$ such that $\Delta_{\s(E)}\subseteq F \circ F^{-1}$.
Hence $F^{-1}$ contains a graph of a map $\phi: \s(E) \to Y$. Thanks to Lemma
\ref{lem:bdd geom}, we can apply \cite[Lemma 4.10]{Roe03} to write $\Gr(\phi)$ as a finite union of $\Gr(\phi_{i})$ for $i=1,2,\cdots, n$ where each $\phi_{i}:D_{i}\rightarrow Y$ is an injective map. Moreover, we can further assume
that $\{D_{i}\}_{i=1}^n$ are mutually disjoint.

For each $i=1,2,\cdots, n$, we set $T_{i} = \chi_{\Gr(\phi_{i})}$, which is regarded as a bounded
operator from $\ell^2(X)$ to $\ell^2(Y)$. Note that $\supp(T_{i}) \subseteq F^{-1}$, and hence $T^*_{i} \in
\M$. Moreover, for each $i=1,2,\cdots, n$, we have
$_A{\langle aT^*_{i}, T^*_{i} \rangle} = aT^*_{i}T_{i} = a \chi_{\Delta_{D_{i}}}$.
Hence
\[
{\sum_{i=1}^n}~  {_A{\langle aT^*_{i}, T^*_{i} \rangle}} =\sum_{i} a
\chi_{\Delta_{D_{i}}} = a \chi_{\Delta_{\s(E)}} =a,
\]
which concludes that the inner $A$-product is full. Similarly, we obtain that the inner $B$-product is also full. Hence $\M$ is an $A$-$B$ imprimitivity bimodule, which concludes that $A$ and $B$ are Morita equivalent.
\end{proof}

\section{Rigidity for geometric ideals}\label{sec:rigidity for geometric ideals}

In this section and the next, we focus on the rigidity problem for geometric ideals and prove Theorem \ref{introthm} ``(1) $\Rightarrow$ (2)'', which is the main task of this paper. To make the proof more transparent, we prove a weak version in this section. The proof is relatively easier, while contains almost all the necessary ingredients to treat the general case.


Recall that for an ideal $I$ in the uniform Roe algebra of a discrete metric space $(X,d)$ of bounded geometry, the associated coarse structure is $\I(I):=\{\supp_\varepsilon(T) ~|~ T \in I, \varepsilon>0\}$ from Proposition \ref{prop:ideals in coarse strcutrue}. We aim to prove the following:

\begin{thm}\label{thm:rigidity for geom ideals}
Let $(X,d_X), (Y,d_Y)$ be discrete metric spaces of bounded geometry, and $I_X, I_Y$ be geometric ideals in the uniform Roe algebras $C^*_u(X)$ and $C^*_u(Y)$, respectively. Assume that $I_X$ and $I_Y$ are isomorphic. Then the coarse spaces $(X,\I(I_X))$ and $(Y, \I(I_Y))$ are coarsely equivalent.
\end{thm}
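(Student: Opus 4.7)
The plan is to leverage the approximate unit structure from Lemma \ref{lem:app unit} to reduce Theorem \ref{thm:rigidity for geom ideals} to the unital rigidity theorem of \cite{BBFKVW22}, and then assemble the resulting pointwise maps into a coarse family via Corollary \ref{cor:coarse equiv general case}. Let $\phi: I_X \to I_Y$ denote the given $\ast$-isomorphism. For each $L \in \L(\I(I_X))$, Lemma \ref{lem:app unit} gives $\chi_L \in I_X$, and a direct check shows $\chi_L I_X \chi_L = \chi_L C^*_u(X) \chi_L = C^*_u(L)$, the unital uniform Roe algebra of $L$ equipped with the restricted metric from $d_X$, which still has bounded geometry. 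Setting $p_L := \phi(\chi_L)$, the map $\phi$ restricts to a $\ast$-isomorphism $\phi_L: C^*_u(L) \to p_L I_Y p_L$.

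The heart of the proof is to replace each abstract projection $p_L$ by a characteristic function projection. Since $\{\chi_{L'} : L' \in \L(\I(I_Y))\}$ is an approximate unit for $I_Y$, for any sufficiently small $\varepsilon>0$ we can find $L'' \in \L(\I(I_Y))$ with $\|\chi_{L''} p_L \chi_{L''} - p_L\| < \varepsilon$. Applying continuous functional calculus to $\chi_{L''} p_L \chi_{L''} \in C^*_u(L'')$ yields a projection $q_L \leq \chi_{L''}$ with $\|q_L - p_L\| < 2\varepsilon$, whence $p_L$ and $q_L$ are Murray--von Neumann equivalent in $I_Y$. Inside the unital algebra $C^*_u(L'')$, standard techniques for projections in uniform Roe algebras of bounded geometry (cf.\ \cite{BBFKVW22}) show that $q_L$ is Murray--von Neumann equivalent to $\chi_{L'}$ for some subset $L' \subseteq L''$. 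Composing these equivalences produces a partial isometry $v_L \in I_Y$ with $v_L^\ast v_L = \chi_{L'}$ and $v_L v_L^\ast = p_L$, and hence $\psi_L := \Ad(v_L^\ast) \circ \phi_L$ is a $\ast$-isomorphism between the \emph{unital} uniform Roe algebras $C^*_u(L)$ and $C^*_u(L')$.

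Applying the unital rigidity theorem \cite[Theorem 1.4]{BBFKVW22} to $\psi_L$ produces a coarse equivalence $f_L: L \to L'$, unique up to closeness. Concretely, $f_L$ is obtained by sending $x \in L$ to a point in $L'$ where the unit vector in $\ell^2(L')$ associated to the rank-one projection $\psi_L(e_{xx}) = v_L^\ast \phi(e_{xx}) v_L$ has modulus bounded below; this description is natural enough that, for $L_1 \subseteq L_2$, the maps $f_{L_1}$ and $f_{L_2}|_{L_1}$ are close, so after passing to coherent representatives we obtain a coarse family $\F = \{f_L : L \in \L(\I(I_X))\}$ in the sense of Definition \ref{defn:coarse family}. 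Running the symmetric argument for $\phi^{-1}$ yields a coarse family $\G = \{g_{L'} : L' \in \L(\I(I_Y))\}$, and the identities $\phi^{-1} \circ \phi = \Id$ and $\phi \circ \phi^{-1} = \Id$ translate, via the canonical (up to closeness) nature of the unital rigidity map, into the compatibility conditions (\ref{EQ:closeness 1}) and (\ref{EQ:closeness 2}). Corollary \ref{cor:coarse equiv general case} then gives the desired coarse equivalence between $(X, \I(I_X))$ and $(Y, \I(I_Y))$.

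The main obstacle is the spatial replacement in the second paragraph: producing $v_L$ exhibiting $p_L$ as Murray--von Neumann equivalent to a characteristic function projection $\chi_{L'} \in I_Y$. This is the essential place where one ``reads off'' the coarse-geometric image of $L$ from the non-unital ideal $I_Y$, and must be handled delicately since $p_L$ is a priori not supported on any specific subset of $Y$. A secondary difficulty is the coherence of the family $\{f_L\}$ under enlargement of $L$, which relies on the explicit form of the unital rigidity map via rank-one projections together with the fact that the point $f_L(x)$ is essentially determined by $\phi(e_{xx}) \in I_Y$ independently of the ambient $L$.
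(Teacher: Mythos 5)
The crux of your reduction is the claim that, inside the unital uniform Roe algebra $C^*_u(L'')$, the projection $q_L$ (equivalently $p_L=\phi(\chi_L)$) is Murray--von Neumann equivalent to a diagonal projection $\chi_{L'}$, which you invoke as a ``standard technique''. This is a genuine gap: the statement is false for general projections in uniform Roe algebras of bounded geometry spaces. Take $X=\bigsqcup_n X_n$ a coarse disjoint union of expander graphs and let $p\in C^*_u(X)$ be the block-diagonal projection whose $n$-th block is the rank-one averaging projection on $\ell^2(X_n)$ (it lies in $C^*_u(X)$ by the spectral gap). If some partial isometry $v\in C^*_u(X)$ satisfied $vv^*=p$ and $v^*v=\chi_A$, then $v=pv$ would be a ghost operator (ghosts form an ideal and $p$ is a ghost), hence $\chi_A=v^*v$ would be a ghost, forcing $A$ to be finite; but $p$ has infinite rank, a contradiction. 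So no such straightening exists in general, and your argument uses no special feature of $p_L$ beyond its being a projection in $I_Y$; establishing such a feature for $\phi(\chi_L)$ is essentially the same difficulty as the rigidity statement itself. The paper avoids this entirely: it spatially implements $\phi$ by a unitary $U$ (Lemma \ref{lem:sp impl}), proves the non-unital rigidity estimate $\inf_{x\in L}\sup_{y\in Y}|\langle U\delta_x,\delta_y\rangle|>0$ for each $L\in\L(\I(I_X))$ by combining the approximate unit $\{\chi_{L'}\}$ with Lemma \ref{lem:equi-app} and the concentration lemma of \cite{BBFKVW22} applied to the projections $\phi^{-1}(e_{yy})$ (Proposition \ref{prop:isom implies rigid isom}), and then converts this into a coarse family directly via Lemma \ref{lem:ctrl sets} and Corollary \ref{cor:coarse equiv general case} (Proposition \ref{prop:rigid isom}), never passing through the unital rigidity theorem on corners.

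A secondary gap, even granting the straightening: the admissibility and coherence of $\F=\{f_L\}$ is not automatic. The maps $f_{L_1}$ and $f_{L_2}$ arise from applying the unital theorem to \emph{different} isomorphisms $\psi_{L_1},\psi_{L_2}$, whose targets are related by conjugation by a priori unrelated partial isometries $v_{L_1},v_{L_2}$; the ``uniqueness up to closeness'' in \cite{BBFKVW22} compares two coarse equivalences induced by one fixed isomorphism between fixed unital algebras, and does not by itself control the sets $\{(f_{L_1}(x),f_{L_2}(x))~|~x\in L_1\cap L_2\}$ as entourages of $\I(I_Y)$, nor the composites needed for (\ref{EQ:closeness 1}) and (\ref{EQ:closeness 2}). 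Controlling exactly such sets is what Lemma \ref{lem:ctrl sets} does in the paper (via the equi-approximability and cofinality argument), and an argument of that kind would still be needed in your approach.
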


The proof of Theorem \ref{thm:rigidity for geom ideals} is divided into two parts. We follow the outline of \cite{BBFKVW22}, while requiring more techniques to overcome the issue of lacking units.

Throughout the rest of this section, let $(X,d_X), (Y,d_Y)$ be discrete metric spaces of bounded geometry, and $I_X, I_Y$ be geometric ideals in the uniform Roe algebras $C^*_u(X)$ and $C^*_u(Y)$, respectively. Denote the associated ideal in $\E_{d_X}$ by $\I_X:=\I(I_X)$, and the associated ideal in $(X,d_X)$ by $\L_X:=\L(\I_X)$ (from Proposition \ref{prop:ideals in coarse strcutrue} and \ref{prop:ideals in space}). Similarly, denote $\I_Y:=\I(I_Y)$ and $\L_Y:=\L(\I_Y)$. Also let $\Phi\colon I_X \to I_Y$ be an isomorphism.

\subsection{Rigid isomorphisms for geometric ideals}
\label{ssec:rigid-isom-geom}

Firstly, recall that the isomorphism $\Phi\colon I_X \to I_Y$ can always be spatially implemented. The proof is similar to that for \cite[Lemma 3.1]{SW13} based on the representation theory for compact operators together with the fact that finite subsets are entourages, and hence we omit the details.

\begin{lem}\label{lem:sp impl}
There exists a unitary $U: \ell^2(X) \to \ell^2(Y)$ such that $\Phi(a) = U a U^*$ for any $a\in I_X$.
\end{lem}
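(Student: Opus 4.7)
My plan is to reduce to the classical fact that $\ast$-isomorphisms between $C^*$-algebras of compact operators are spatially implemented. The first structural observation to make is that finite subsets of $X\times X$ automatically lie in $\I_X$: since $\I_X$ is itself a coarse structure (Definition~\ref{defn:ideals in coarse structure}), clause~(2) of Definition~\ref{defn:coarse structure} forces every finite subset of $X\times X$ to be an entourage of $\I_X$. Consequently every rank-one operator $e_{xy}$ belongs to $\CC_u[X,\I_X]\subseteq I_X$, and taking norm closures gives $\K(\ell^2(X))\subseteq I_X$; the analogous inclusion holds on the $Y$-side.

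Next I would argue that $\Phi$ must map $\K(\ell^2(X))$ onto $\K(\ell^2(Y))$ by providing an intrinsic, $\ast$-isomorphism-invariant description of the compact ideal inside $I_X$. Namely, a nonzero projection $p\in I_X$ is \emph{minimal} (in the sense that $pI_Xp=\CC p$) if and only if it is a rank-one projection on $\ell^2(X)$: one direction is clear since every rank-one projection lies in $I_X$ and already satisfies $p\B(\ell^2(X))p=\CC p$; conversely, if $p\in I_X$ is minimal then $p\K(\ell^2(X))p=\K(p\ell^2(X))$ must be contained in $\CC p$, which forces $\dim(p\ell^2(X))=1$. Hence the closed two-sided ideal in $I_X$ generated by its minimal projections is exactly $\K(\ell^2(X))$, and this description is preserved by $\Phi$, giving $\Phi(\K(\ell^2(X)))=\K(\ell^2(Y))$. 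The classical spatial-implementation theorem for isomorphisms between algebras of compact operators then yields a unitary $U\colon \ell^2(X)\to \ell^2(Y)$ with $\Phi(k)=UkU^*$ for every $k\in \K(\ell^2(X))$ (in particular this forces $|X|=|Y|$).

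To upgrade the spatial identity from $\K(\ell^2(X))$ to all of $I_X$, I would invoke the essentiality of compacts. For any $a\in I_X$ and $k\in \K(\ell^2(X))$, the product $ak$ is itself compact, so
\[
\Phi(a)\cdot UkU^* = \Phi(a)\Phi(k) = \Phi(ak) = U(ak)U^* = UaU^* \cdot UkU^*.
\]
Letting $k$ run through an approximate unit of $\K(\ell^2(X))$ (for instance the finite-rank projections $\sum_{x\in F}e_{xx}$ indexed by finite $F\subseteq X$), the factors $UkU^*$ converge strongly to $\Id_{\ell^2(Y)}$, so $\Phi(a)=UaU^*$ as required. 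I expect the only real subtlety to be the very first step emphasised by the hint: one needs the coarse-structure axioms to detect that all finite subsets of $X\times X$ sit inside $\I_X$, which is what allows the full compact algebra to live inside the non-unital ideal $I_X$ and makes the classical compact-operator machinery applicable, just as in the proof of \cite[Lemma~3.1]{SW13}.
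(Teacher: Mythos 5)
Your proposal is correct and follows essentially the same route the paper intends: the paper omits the proof precisely because it is the argument of \cite[Lemma 3.1]{SW13}, resting on the observation that finite subsets are entourages of $\I_X$ (so $\K(\ell^2(X))\subseteq I_X$), the representation theory of compact operators to implement $\Phi$ on the compacts, and essentiality of the compacts to extend the spatial identity to all of $I_X$. Your intermediate characterisation of $\K(\ell^2(X))$ via minimal projections is a standard and valid way to see that $\Phi$ preserves the compact ideal, so no gap remains.
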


%
%

In \cite{BF21}, the authors introduced a notion of rigid isomorphism which plays a key role to attack the rigidity problem. The idea can be traced back to the work \cite{SW13}. We introduce the following to deal with the non-unital case.

\begin{defn}\label{defn:rigid isom}
With the same notation as above, the isomorphism $\Phi\colon I_X\rightarrow I_Y$ is called a \textit{rigid isomorphism} if $\Phi$ can be spatially implemented by a unitary $U: \ell^2(X) \to \ell^2(Y)$ satisfying:
  \[
    \inf_{x\in L}\sup_{y\in Y}|\langle
    U\delta_{x},\delta_{y}\rangle|>0\quad\text{for all } \quad L\in\L_{X},
   \]
and
   \[
    \inf_{y\in L'}\sup_{x\in X}|\langle
    U^{*}\delta_{y},\delta_{x}\rangle|>0\quad\text{for all } \quad L'\in\L_{Y}.
  \]
In this case, we say that $I_X$ and $I_Y$ are \emph{rigidly isomorphic}.
\end{defn}

The first step to attack the rigidity problem is the following (comparing \cite[Theorem 4.12]{BF21}):

\begin{prop}\label{prop:rigid isom}
With the same notation as above, assume that $I_X$ and $I_Y$ is rigidly isomorphic. Then the coarse spaces $(X,\I_X)$ and $(Y, \I_Y)$ are coarsely equivalent.
\end{prop}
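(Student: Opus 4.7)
The plan is to construct coarse families $\F = \{f_L : L \to Y ~|~ L \in \L_X\}$ and $\G = \{g_{L'} : L' \to X ~|~ L' \in \L_Y\}$ in the sense of Definition \ref{defn:coarse family} that are mutually coarse inverses in the sense of Definition \ref{defn:coarse equiv using families}, and then invoke Corollary \ref{cor:coarse equiv general case 2} to extract a coarse equivalence between $(X,\I_X)$ and $(Y,\I_Y)$.

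For each $L \in \L_X$, the rigidity hypothesis supplies $\alpha_L := \inf_{x \in L}\sup_{y \in Y} |\langle U\delta_x, \delta_y \rangle| > 0$. I define $f_L : L \to Y$ by choosing, for each $x \in L$, a point $f_L(x) \in Y$ with $|\langle U\delta_x, \delta_{f_L(x)}\rangle| \geq \alpha_L/2$; the existence of such a point uses only that $U\delta_x$ is a unit vector. The family $\G$ is defined analogously via $U^*$.

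The central technical step is the coarseness of $\F$. Given $E \in \I_X$, I set $L := \r(E) \cup \s(E) \in \L_X$, so that $E \subseteq L \times L$. Since $\chi_E \in \CC_u[X, \I_X] \subseteq I_X$, the conjugate $U\chi_E U^*$ lies in $I_Y$, and Proposition \ref{prop:ideals in coarse strcutrue} then yields $\supp_\varepsilon(U\chi_E U^*) \in \I_Y$ for every $\varepsilon > 0$. I plan to show that for a suitable $\varepsilon > 0$ depending only on $\alpha_L$ and on the bounded-geometry constant $n(E)$, the pair $(f_L(x), f_L(y))$ belongs to $\supp_\varepsilon(U\chi_E U^*)$ for every $(x,y) \in E$. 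The key estimate is that the diagonal contribution to the matrix entry of $U\chi_E U^*$ at $(f_L(x), f_L(y))$ is at least $\alpha_L^2/4$, while the off-diagonal contributions (which are only a bounded-geometry number of terms involving products $\langle U\delta_{x'}, \delta_{f_L(x)}\rangle\overline{\langle U\delta_{y'}, \delta_{f_L(y)}\rangle}$) are controlled by bounded geometry together with the orthonormality of $\{U\delta_x\}_{x \in X}$.

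Admissibility of $\F$, namely $\{(f_{L_1}(x), f_{L_2}(x)) : x \in L_1 \cap L_2\} \in \I_Y$, will follow from a parallel rank-one argument applied to $U e_{xx} U^*$, since both $f_{L_1}(x)$ and $f_{L_2}(x)$ record large values of $|\langle U\delta_x, \delta_\cdot \rangle|$. The closeness condition (\ref{EQ:closeness 1}) is obtained the same way: $g_{f_L(L)}(f_L(x))$ is a point maximising $|\langle U^*\delta_{f_L(x)}, \delta_\cdot\rangle|$, and since $|\langle U^*\delta_{f_L(x)}, \delta_x\rangle| \geq \alpha_L/2$, the unitarity of $U$ forces $g_{f_L(L)}(f_L(x))$ to lie in a bounded-geometry entourage of $x$, which sits in $\I_X$. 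Symmetry handles (\ref{EQ:closeness 2}), after which Corollary \ref{cor:coarse equiv general case 2} closes the argument.

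The main obstacle I anticipate is the coarseness step. Unlike the unital setting of \cite{BF21}, the rigidity constant $\alpha_L$ varies with $L$ and there is no single global constant governing propagation. The way around this is precisely to treat each $L \in \L_X$ separately, using the approximate unit $\{\chi_L\}_{L \in \L_X}$ of Lemma \ref{lem:app unit} to localise the rigidity analysis inside the corner $\chi_L C^*_u(X)\chi_L$ of $I_X$, thereby reducing each local estimate to a unital argument of \v Spakula--Willett type.
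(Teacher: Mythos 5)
Your overall skeleton matches the paper's: build $\F=\{f_L\}$ and $\G=\{g_{L'}\}$ from near-maximisers of the matrix coefficients of $U$ and $U^*$, verify coarseness, admissibility and the closeness conditions (\ref{EQ:closeness 1})--(\ref{EQ:closeness 2}), and invoke Corollary \ref{cor:coarse equiv general case 2}. But the central technical step, as you set it up, does not work. For coarseness you claim that $(f_L(x),f_L(y))\in\supp_\varepsilon(U\chi_E U^*)$ because the ``diagonal'' term $\langle U\delta_x,\delta_{f_L(x)}\rangle\overline{\langle U\delta_y,\delta_{f_L(y)}\rangle}$ has modulus $\geq \alpha_L^2/4$ while the remaining terms are small. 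The remaining contribution is $\sum_{(x',y')\in E\setminus\{(x,y)\}}\langle U\delta_{x'},\delta_{f_L(x)}\rangle\overline{\langle U\delta_{y'},\delta_{f_L(y)}\rangle}$, and neither orthonormality of $\{U\delta_{x'}\}$ nor bounded geometry makes it small: orthonormality only gives $\ell^2$-control of the coefficient sequences (individual off-diagonal factors can be of modulus close to $1$ for other points of $E$), bounded geometry bounds the number of $y'$ per $x'$ but not the total number of terms, and nothing prevents cancellation against the diagonal term. So there is no $\varepsilon=\varepsilon(\alpha_L,n(E))$ for which the claimed containment holds, and this is exactly why the literature (and this paper) does not argue entrywise on $U\chi_E U^*$.

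There is a second, independent gap in your admissibility and closeness steps: the rank-one computation with $Ue_{xx}U^*$ correctly shows that each single pair $(f_{L_1}(x),f_{L_2}(x))$ lies in \emph{some} entourage of $\I_Y$, but that is vacuous (finite sets are entourages), and coarse structures are closed only under \emph{finite} unions, so you cannot conclude that the whole infinite set $\{(f_{L_1}(x),f_{L_2}(x))~|~x\in L_1\cap L_2\}$, or $\{(g_{f_L(L)}\circ f_L(x),x)~|~x\in L\}$, lies in a single entourage; likewise ``unitarity forces $g_{f_L(L)}(f_L(x))$ to lie in a bounded-geometry entourage of $x$'' has no pointwise justification. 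What is actually needed is the uniform statement proved in the paper as Lemma \ref{lem:ctrl sets}: for every $E\in\I_X$ and $\delta>0$ the \emph{entire} set $F(E,\delta)$ of pairs witnessing $\delta$-large coefficients over $E$ belongs to $\I_Y$. Its proof is by contradiction and uses the combinatorial selection lemma of \cite{BF21}, SOT-convergent sums of rank-one operators, the equi-approximability Lemma \ref{lem:equi-app}, and the approximate unit of Lemma \ref{lem:app unit} to localise to some $\chi_{L'}$ -- the ``Špakula--Willett type'' reduction you allude to in your last paragraph, but which your proposal never actually carries out. Once $F(E,\delta)\in\I_Y$ is available, coarseness, admissibility and closeness all follow by the containments $(f_L\times f_L)(E)\subseteq F(E,\delta_L)$ etc., which is the route the paper takes.
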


To prove Proposition \ref{prop:rigid isom}, we need some preparations.

\begin{defn}[{\cite[Definition 4.3]{BF21}}]\label{defn:appr}
Let $(X,d)$ be a discrete metric of bounded geometry, $\varepsilon>0$ and $r \geq 0$. An operator $a\in \B(\ell^2(X))$ is called \emph{$\varepsilon$-$r$-approximable} if there exists $b\in \B(\ell^2(X))$ with propagation at most $r$ such that $\|a-b\| \leq \varepsilon$.
\end{defn}

The following key ``equi-approximability lemma'' was originally obtained in \cite[Section 4]{BF21} (see also \cite[Lemma 1.9]{BBFKVW22}), which will be used frequently in the sequel.

\begin{lem}\label{lem:equi-app}
Let $(X,d)$ be a discrete metric of bounded geometry, and $\{a_n\}_n$ be a sequence of operators such that $\SOT$-$\sum_{n\in M} a_n$ converges to an element in $C^*_u(X)$ for all $M \subseteq \NN$. Then for all $\varepsilon>0$, there exists $r>0$ such that $\SOT$-$\sum_{n\in M} a_n$ is $\varepsilon$-$r$-approximable for all $M \subseteq \NN$.
\end{lem}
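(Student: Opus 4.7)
The plan is to argue by contradiction via a gliding hump construction. Suppose the conclusion fails for some $\varepsilon > 0$; then for each positive integer $k$ there exists $M_k \subseteq \NN$ such that $T_{M_k} := \SOT$-$\sum_{n \in M_k} a_n$ is not $\varepsilon$-$k$-approximable. As a preparatory step, I would first verify the uniform bound $K := \sup_{M \subseteq \NN} \|T_M\| < \infty$: this follows from Banach--Steinhaus applied to the vector-valued set functions $M \mapsto T_M \xi$ for $\xi \in \ell^2(X)$, since the SOT-convergence hypothesis forces these to be unconditionally convergent and hence to have uniformly bounded partial sums.

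The core of the proof is to manufacture a single $M \subseteq \NN$ for which $T_M$ has matrix coefficients bounded below by a fixed $\delta_0 > 0$ at pairs $(x,y)$ with $d_X(x,y)$ arbitrarily large, contradicting $T_M \in C^*_u(X)$. I would build $M$ inductively. At step $k$, having selected $N_1 < \cdots < N_k$, pairs $(x_1,y_1), \ldots, (x_{k-1},y_{k-1})$, and finite sets $M'_1, \ldots, M'_{k-1}$ with $M'_j \subseteq [N_j, N_{j+1})$, I pick a radius $r_k$ much larger than all distances already in play, use non-$\varepsilon$-$r_k$-approximability of $T_{M_{r_k}}$ to extract a pair $(x_k, y_k)$ with $d_X(x_k, y_k) > r_k$ and $|T_{M_{r_k}}(x_k, y_k)| > \delta_0$ for a uniform $\delta_0 > 0$ depending only on $\varepsilon$ and the bounded geometry of $X$, and then use $T_{M_{r_k}} \in C^*_u(X)$ together with the strong convergence to zero of the tails $\SOT$-$\sum_{n \geq N,\, n \in M_{r_k}} a_n$ as $N \to \infty$ to truncate $M_{r_k}$ to a finite set $M'_k \subseteq M_{r_k} \cap [N_k, N_{k+1})$ while preserving the witness up to $\delta_0/2$.

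Setting $M := \bigsqcup_k M'_k$, the hypothesis forces $T_M \in C^*_u(X)$, so $T_M$ is norm-approximable by finite propagation operators. At each inductive step, however, I can arrange via bounded geometry that $(x_k, y_k)$ lies far from every previously chosen $(x_j, y_j)$: each scale admits only finitely many bad pairs, so pushing $r_k$ large enough avoids all existing obstructions. Combined with the uniform bound $K$, this forces the cross-contributions $(\SOT$-$\sum_{n \in M'_j} a_n)(x_k, y_k)$ for $j \neq k$ to sum to at most $\delta_0/4$, so that $|T_M(x_k, y_k)| > \delta_0/4$ for every $k$ with $d_X(x_k, y_k) \to \infty$, contradicting approximability of $T_M$ by finite propagation operators.

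The main obstacle I expect is the extraction step: non-$\varepsilon$-$r$-approximability a priori only produces a gap in operator norm between $T$ and its $r$-truncation, not a single large matrix entry at distance greater than $r$. Upgrading this operator-norm gap to a quantitative large matrix coefficient requires a bounded-geometry Schur-type argument, localising $T - \tilde T^{(r)}$ against finite-rank witnesses and using the $n(E_s)$-bounds from Definition \ref{defn:bdd geom for coarse structure} to pin down a single large entry. Propagating the resulting constants through the induction so that a single $\delta_0 > 0$ works at every scale $r_k$ is where the careful bookkeeping lives.
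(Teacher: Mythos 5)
Your preparatory step (the uniform bound $\sup_M\|T_M\|<\infty$ via unconditional convergence and Banach--Steinhaus) is fine, but the extraction step at the heart of your gliding hump is false, and it is the whole engine of your argument. You want: if $T$ is not $\varepsilon$-$r$-approximable, then there is a \emph{single} pair $(x,y)$ with $d(x,y)>r$ and $|T(x,y)|\ge\delta_0$, where $\delta_0>0$ depends only on $\varepsilon$ and the bounded geometry constants of $X$. Take $X=\ZZ$, finite sets $A,B\subseteq\ZZ$ with $|A|=|B|=N$ and $d(A,B)>r$, and let $T$ be supported on $A\times B$ with block the unitary Fourier matrix $N^{-1/2}(e^{2\pi i jk/N})_{j,k}$. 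Then $\|T\|=1$, and for any $b$ with $\ppg(b)\le r$ one has $\chi_A b\chi_B=0$, hence $\|T-b\|\ge\|\chi_A(T-b)\chi_B\|=\|T\|=1$; so $T$ is not $\varepsilon$-$r$-approximable for any $\varepsilon<1$, yet every matrix entry has modulus $N^{-1/2}$, arbitrarily small while the geometry of $\ZZ$ is fixed. Note that $T$ even lies in $\CC_u[\ZZ]$, so the fact that your operators $T_{M_{r_k}}$ belong to $C^*_u(X)$ does not rescue the step. The same obstruction blocks the weaker extraction of unit vectors with $r$-separated supports pairing non-trivially with $T$: the distance from $T$ to propagation-$\le r$ operators is not controlled by the norms of far corners $\chi_A T\chi_B$ --- that is precisely the quasi-locality versus approximability problem, which is only known to behave well under extra hypotheses such as Property A, so no ``bounded-geometry Schur-type argument'' can supply your uniform $\delta_0$. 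Without it there is nothing for the hump to glide on, and the rest of the bookkeeping cannot start.

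For context, the paper does not prove this lemma; it cites \cite{BF21} and \cite{BBFKVW22}, and the argument there avoids far-away witnesses entirely, running instead through compactness of $2^{\NN}$. Your preliminary observation is exactly the part that survives: for each $\xi$ all subseries of $\sum_n a_n\xi$ converge, hence the tails are uniformly small over all $M$, which gives $\sup_M\|T_M\|<\infty$ and continuity of $M\mapsto T_M$ (in SOT) on the Cantor space $2^{\NN}$. Fixing $\varepsilon>0$, the sets $\mathcal{M}_r:=\{M\subseteq\NN : T_M \text{ is } \varepsilon\text{-}r\text{-approximable}\}$ are closed (a WOT-cluster point of the bounded net of propagation-$\le r$ approximants again has propagation $\le r$, and the norm is WOT lower semicontinuous), and they cover $2^{\NN}$ because every $T_M$ lies in $C^*_u(X)$. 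By Baire some $\mathcal{M}_{r_0}$ contains a cylinder $\{M : M\cap[1,N]=F\}$; writing $T_M=T_{(M\setminus[1,N])\cup F}-T_F+T_{M\cap[1,N]}$ and handling the finitely many operators $T_G$ with $G\subseteq[1,N]$ at a common scale yields a single $r_1$ at which every $T_M$ is $3\varepsilon$-$r_1$-approximable, which proves the lemma. Some argument of this uniformisation type is needed; the inductive selection of large matrix entries cannot be repaired.
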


We also need the following lemma, which is analogous to \cite[Lemma 4.11]{BF21}. As explained in Section \ref{sec:intro}, it is unclear whether the coarse structures $\I_X$ and $\I_Y$ are small in the sense of \cite[Definition 4.2]{BF21}. This obstructs us from using the results in \cite[Section 4]{BF21} directly, and hence more techniques are required.

\begin{lem}\label{lem:ctrl sets}
  With the notation as above, for any $E\in \I_{X}$ and $\delta>0$, the
  following set
  \[
    F(E,\delta):=\left\{(y_{1},y_{2})\in Y\times Y\;\middle\vert\;
    \begin{array}{l}
      \text{ there exists }
      (x_{1},x_{2})\in E\text{ such that}\\
      |\langle
      U\delta_{x_{1}},\delta_{y_{1}} \rangle|\geq\delta\text{ and }
       |\langle U\delta_{x_{2}},\delta_{y_{2}} \rangle|\geq\delta
    \end{array}
\right\}
\]
belongs to $\I_{Y}$.
\end{lem}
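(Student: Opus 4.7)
The plan is to show $F(E,\delta) \in \I_Y$ by exhibiting an $r > 0$ and a set $Y_0 \in \L_Y$ such that $F(E,\delta) \subseteq E_r \cap (Y_0 \times Y_0)$; since $\I_Y = \I(\L_Y)$ by Proposition \ref{prop:ideals in space}, this combination of a \emph{propagation bound} and a \emph{localization inside a square} suffices. By bounded geometry and \cite[Lemma 4.10]{Roe03}, $E$ is a finite disjoint union of partial matchings, and since $F(\cdot,\delta)$ distributes over unions while $\I_Y$ is closed under finite unions, one may assume $E = \Gr(\phi)$ for a partial bijection $\phi\colon D \to X$.

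For the propagation bound, put $a_x := e_{\phi(x),x}$ for $x \in D$. Each $a_x$ lies in $I_X$, and for any $M \subseteq D$ the partial isometry $T_M := \sum_{x\in M}a_x$ still lies in $I_X$ with $\|T_M\|\le 1$, so $\Phi(T_M) \in I_Y \subseteq C^*_u(Y)$. I claim there is a single $r > 0$ such that every $\Phi(a_x)$ is $(\delta^2/2)$-$r$-approximable: if not, pick $x_n \in D$ with $\Phi(a_{x_n})$ not $(\delta^2/2)$-$r_n$-approximable for some $r_n \to \infty$; Lemma \ref{lem:equi-app} applied to $\{\Phi(a_{x_n})\}_n$ (with $\varepsilon = \delta^2/2$) then yields a uniform $r^*$ such that each $\Phi(a_{x_n})$ is $(\delta^2/2)$-$r^*$-approximable (take $M = \{n\}$), contradicting the choice of $x_n$ once $r_n > r^*$. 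Now for $(y_1,y_2) \in F(E,\delta)$ with witness $(\phi(x),x)$, the entry $\Phi(a_x)(y_1,y_2) = U(y_1,\phi(x))\overline{U(y_2,x)}$ has modulus at least $\delta^2$; any $r$-propagation operator within $\delta^2/2$ of $\Phi(a_x)$ thus has a nonzero $(y_1,y_2)$-entry, giving $d_Y(y_1,y_2) \le r$. Hence $F(E,\delta) \subseteq E_r$.

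For the localization, set $L := \r(E) \cup \s(E) \in \L_X$, so that $E \subseteq L \times L$ and, by Lemma \ref{lem:app unit}, $\chi_L \in I_X$; then $\Phi(\chi_L) = U\chi_L U^* \in I_Y$. For any $y \in Y$ admitting $x_0 \in L$ with $|U(y,x_0)| \ge \delta$, the diagonal entry $\Phi(\chi_L)(y,y) = \sum_{x \in L}|U(y,x)|^2$ is at least $\delta^2$. Setting $Y_0 := \{y \in Y : \Phi(\chi_L)(y,y) \ge \delta^2\}$, the diagonal $\{(y,y) : y\in Y_0\}$ lies in $\supp_{\delta^2}(\Phi(\chi_L)) \in \I_Y$, so $Y_0 = \r(\{(y,y) : y\in Y_0\}) \in \L_Y$; and the definition of $F(E,\delta)$ together with $E \subseteq L \times L$ forces $F(E,\delta) \subseteq Y_0 \times Y_0$. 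Combining with the propagation bound gives $F(E,\delta) \subseteq E_r \cap (Y_0 \times Y_0) \in \I_Y$, concluding the proof. The main obstacle is the equi-approximability step, which in the non-unital setting is precisely what forces the separate localization via the concrete approximate unit $\chi_L$ of Lemma \ref{lem:app unit}, since one cannot rely on an ambient unit of $C^*_u(X)$ to produce the required bounded set $Y_0 \in \L_Y$ for free.
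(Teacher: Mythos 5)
Your argument is correct, but it takes a genuinely different route from the paper's. The paper proves the lemma by contradiction: assuming $F(E,\delta)\notin\I_Y$, it picks witnesses $(x_1^F,x_2^F,y_1^F,y_2^F)$ indexed by all entourages $F\in\I_Y$, invokes the combinatorial cofinality lemma \cite[Lemma 4.10]{BF21} to arrange distinctness, and then combines Lemma \ref{lem:equi-app} with a set $L'\in\L_Y$ obtained by approximating $\Phi(\chi_L)$ via the approximate unit of Lemma \ref{lem:app unit}, reaching the numerical contradiction $\delta^2\le\cdot<\delta^2$ at one well-chosen entourage $F_1$. You instead prove a direct containment: after splitting $E$ into finitely many graphs of injective partial maps via \cite[Lemma 4.10]{Roe03} (legitimate, since $F(\cdot,\delta)$ commutes with unions and $\I_Y$ is closed under finite unions and subsets), equi-approximability yields the propagation bound $F(E,\delta)\subseteq E_r$, and the $\delta^2$-support of $\Phi(\chi_L)\in I_Y$ --- rather than an approximating $\chi_{L'}$ --- hands you $Y_0\in\L_Y$ with $F(E,\delta)\subseteq Y_0\times Y_0$ via Proposition \ref{prop:ideals in coarse strcutrue}; Proposition \ref{prop:ideals in space} then finishes. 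This avoids \cite[Lemma 4.10]{BF21} entirely and separates the two ingredients (propagation control and localization) more transparently. One small point you should make explicit: in the contradiction argument for the uniform $r$, the $x_n$ must be chosen pairwise distinct, so that $\SOT$-$\sum_{n\in M}a_{x_n}$ converges to an element of $I_X$ for every $M\subseteq\NN$ and hence its image under the spatially implemented $\Phi$ lies in $C^*_u(Y)$, as Lemma \ref{lem:equi-app} requires. This is automatic: each fixed $\Phi(a_x)\in C^*_u(Y)$ is $(\delta^2/2)$-$r$-approximable for all sufficiently large $r$, so a fixed $x$ can only witness failure for finitely many of the $r_n\to\infty$; alternatively, since $X$ has bounded geometry it is countable, so you may simply enumerate $D$ and apply Lemma \ref{lem:equi-app} once, with no contradiction needed.
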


\begin{proof}
Suppose otherwise. Then there exist $E\in\I_{X}$ and $\delta>0$ such that $F(E,\delta)\notin \I_{Y}$. Hence for any $F\in\I_{Y}$, we have $F(E,\delta) \neq F$. Note that subsets of any entourage are entourages (see Definition \ref{defn:coarse structure}(3)), so we have $F(E,\delta) \setminus F \neq \emptyset$. Therefore, we can find $(x_{1}^{F},x_{2}^{F})\in E$ and $(y_{1}^{F},y_{2}^{F})\in Y\times Y$ such that $|\langle U\delta_{x_{1}^{F}},\delta_{y_{1}^{F}} \rangle|\geq\delta$ and $|\langle U\delta_{x_{2}^{F}},\delta_{y_{2}^{F}} \rangle|\geq\delta$ while $(y_{1}^{F},y_{2}^{F})\notin F$. Ordering $\I_Y$ by inclusion, it follows from \cite[Lemma 4.10]{BF21} that there exist cofinal
$I\subset \I_{Y}$, $J\subset\I_{Y}$ and a map $\varphi: I\rightarrow J$ such that
\begin{enumerate}
\item\label{item:BF411.1} $x_{1}^{F}\neq x_{1}^{F'}$ and $x_{2}^{F}\neq x_{2}^{F'}$ for
  all $F\neq F'\in J$;
\item\label{item:BF411.2} $x_{1}^{F} = x_{1}^{\varphi(F)}$ and $x_{2}^{F} = x_{2}^{\varphi(F)}$ for
  all $F\in I$.
\end{enumerate}
Since elements in $\{x_{1}^{F}\}_{F\in J}\subseteq X$ are distinct, then $J$ is countable. For $A\subseteq J$, condition (\ref{item:BF411.1}) and $\I_X$ having bounded geometry imply that $\SOT$-$\sum_{F\in A}e_{x_{1}^{F}x_{2}^{F}}\in \mathbb{C}_{u}[X,\I_{X}]$ with support in $E$. Hence $\SOT$-$\sum_{F\in A}\Phi(e_{x_{1}^{F}x_{2}^{F}}) = \SOT$-$\Phi(\sum_{F\in A}e_{x_{1}^{F}x_{2}^{F}})$ belongs to $C^*_u(Y)$ since $\Phi$ is $\SOT$-continuous. Applying Lemma \ref{lem:equi-app}, for $\epsilon = \delta^{2}/3$ there exists $r>0$ such that $\SOT$-$\sum_{F\in A}\Phi(e_{x_{1}^{F}x_{2}^{F}})$ is $\epsilon$-$r$-approximable for all $A \subseteq J$. In particular, we have
\begin{equation}\label{EQ:est 1}
\|\chi_{A}\Phi(e_{x_{1}^{F}x_{2}^{F}})\chi_{B}\|<\epsilon
\end{equation}
for all $F \in J$ and $A,B \subseteq Y$ with $d_Y(A,B) \geq r$.

By Proposition \ref{prop:ideals in space}, we can assume that $E\subseteq L\times L$ for some $L\in \L_{X}$. Since
$\{\chi_{L'}\}_{{L'\in\L_{Y}}}$ is an approximate unit for $I_Y$ due to Lemma \ref{lem:app unit}, there exist $L'\in\L_{Y}$ such that
\[
\|\Phi(\chi_{L})\chi_{L'}-\Phi(\chi_{L})\|<\epsilon \quad \text{and} \quad \|\chi_{L'}\Phi(\chi_{L})-\Phi(\chi_{L})\|<\epsilon.
\]
Thus, we have
\begin{align*}
&\|\chi_{L'}\Phi(e_{x_{1}^{F}x_{2}^{F}})\chi_{L'}  - \Phi(e_{x_{1}^{F}x_{2}^{F}})\| \leq \|\chi_{L'}\Phi(e_{x_{1}^{F}x_{2}^{F}})\chi_{L'} - \Phi(e_{x_{1}^{F}x_{2}^{F}})\chi_{L'}\| + \|\Phi(e_{x_{1}^{F}x_{2}^{F}})\chi_{L'} - \Phi(e_{x_{1}^{F}x_{2}^{F}})\| \\
& \leq \|\chi_{L'}\Phi(\chi_{L})\Phi(e_{x_{1}^{F}x_{2}^{F}}) - \Phi(\chi_{L})\Phi(e_{x_{1}^{F}x_{2}^{F}})\| + \|\Phi(e_{x_{1}^{F}x_{2}^{F}})\Phi(\chi_{L}) \chi_{L'} - \Phi(e_{x_{1}^{F}x_{2}^{F}})\Phi(\chi_{L})\| \\
& \leq 2\varepsilon.
\end{align*}

Setting $F_{1} :=\{(y_1,y_2) \in L'\times L' ~|~ d_Y(y_1, y_2) <r \}$, it is clear that $F_{1}\in\I_{Y}$. For all $A,B\subseteq Y$ which are $F_{1}$-separated, then $d_Y(L'\cap A, L'\cap B) \geq r$. Hence
\[
\|\chi_{A}\Phi(e_{x_{1}^{F}x_{2}^{F}})\chi_{B}\| \leq \|\chi_{A}\chi_{L'}\Phi(e_{x_{1}^{F}x_{2}^{F}})\chi_{L'}\chi_{B}\| + 2\varepsilon = \|\chi_{A\cap L'}\Phi(e_{x_{1}^{F}x_{2}^{F}})\chi_{L'\cap B}\| + 2\varepsilon  < 3\varepsilon = \delta^2,
\]
where the penultimate inequality comes from (\ref{EQ:est 1}). Since $I$ is cofinal in $\I_{Y}$, we can assume that $F_{1}\in I$.

Then we have
\[
 \left\|\chi_{\left\{y_{1}^{F_{1}}\right\}}\Phi(e_{x_{1}^{F_{1}}x_{2}^{F_{1}}})\chi_{\left\{y_{2}^{F_{1}}\right\}}\right\|
   = \left\|\chi_{\left\{y_{1}^{F_{1}}\right\}}\Phi(e_{x_{1}^{\varphi(F_{1})}x_{2}^{\varphi(F_{1})}})\chi_{\left\{y_{2}^{F_{1}}\right\}}\right\|<\delta^{2}.
\]
On the other hand, a direct calculation shows that
\[
\left\|\chi_{\left\{y_{1}^{F_{1}}\right\}}\Phi(e_{x_{1}^{F_{1}}x_{2}^{F_{1}}})\chi_{\left\{y_{2}^{F_{1}}\right\}}\right\| = \left|\left\langle \delta_{y_{1}^{F_{1}}},U\left(\delta_{x_{1}^{F_{1}}}\right) \right\rangle \right|\cdot \left|\left\langle
               U\left(\delta_{x_{2}^{F_{1}}}\right),\delta_{y_{2}^{F_{1}}} \right\rangle\right| \geq \delta^2,
\]
which leads to a contradiction.
\end{proof}

Now we are in the position to prove Proposition \ref{prop:rigid isom}.

\begin{proof}[Proof of Proposition \ref{prop:rigid isom}]
Let $U\colon \ell^{2}(X)\rightarrow\ell^{2}(Y)$ be a unitary operator which spatially implements the rigid isomorphism $\Phi: I_X \to I_Y$. Hence:
\begin{itemize}
\item for any $L\in\L_{X}$, there exist $\delta_{L}>0$ and $f_{L}:L\rightarrow
  Y$ such that
  \[
|\langle U\delta_{x},\delta_{f_{L}(x)} \rangle|\geq\delta_{L} \quad \text{for any} \quad x\in L;
  \]
\item for any $L'\in\L_{Y}$, there exist $\delta_{L'}>0$ and $g_{L'}:L'\rightarrow
  X$ such that
  \[
|\langle U^{*}\delta_{y},\delta_{g_{L'}(y)} \rangle|\geq\delta_{L'} \quad \text{for any} \quad y\in L'.
  \]
\end{itemize}
We aim to show that the families $\F:=\{f_L ~|~ L \in \L_X\}$ and $\G:=\{g_{L'} ~|~ L' \in \L_Y\}$ provide a coarse equivalence between $(X, \I_X)$ and $(Y, \I_Y)$ in the sense of Definition \ref{defn:coarse equiv using families}, and hence conclude the proof thanks to Corollary \ref{cor:coarse equiv general case}.

Firstly, we show that the families $\F$ and $\G$ are coarse. For $L_{1},L_{2}\in \L_{X}$, consider the set
$\{(f_{L_{1}}(x),f_{L_{2}}(x))\;|\;x\in L_{1}\cap L_{2}\}$, which is contained in $F(\Delta_{L_{1}\cap L_{2}},\delta_{L_{1}}\wedge \delta_{L_{2}})$. By Lemma \ref{lem:ctrl sets}, the set $F(\Delta_{L_{1}\cap L_{2}},\delta_{L_{1}}\wedge \delta_{L_{2}})$ belongs to $\I_{Y}$. Hence $\F$ is admissible.

Given $E\in\I_{X}$, we can assume that $E \subseteq L \times L$ for some $L \in \L_X$ thanks to Proposition \ref{prop:ideals in space}. Then for any $(x_1,x_2) \in E$, it follows from the definition that $(f_L(x_1), f_L(x_2)) \in F(E,\delta_L)$. In other words, we obtain $(f_{L}\times f_{L})(E)\subseteq F(E,\delta_{L})$, where the latter belongs to $\I_Y$ due to Lemma \ref{lem:ctrl sets}. Hence we have $\I_{X}\subseteq\F^{*}(\I_{Y})$, which concludes that the family $\F$ is coarse. Similarly, we obtain that the family $\G$ is also coarse. Moreover by Lemma \ref{lem:pullback}, we have $f_L(L) \in \L_Y$ for any $L \in \L_X$, and $g_{L'}(L') \in \L_Y$ for any $L' \in \L_Y$.

It remains to show that (\ref{EQ:closeness 1}) and (\ref{EQ:closeness 2}) holds for $\F$ and $\G$. Fix $L'\in\L_{Y}$ and denote $L'':=g_{L'}(L')$. By the choice of $g_{L'}$ and $f_{L''}$, we have
\[
  |\langle U \delta_{g_{L'}(y)},\delta_{y} \rangle| =  |\langle \delta_{g_{L'}(y)}, U^*\delta_{y} \rangle| \geq \delta_{L'} \text{ for any } y\in L'
\]
and
\[
|\langle U \delta_{g_{L'}(y)}, \delta_{f_{L''}(g_{L'}(y))} \rangle|\geq\delta_{L''} \text{ for any } y\in L'.
\]
This implies that
\[
\{(y,f_{L''}(g_{L'}(y)))\;|\; y\in L'\}\subseteq F(\Delta_{L''},\delta_{L'}\wedge\delta_{L''}),
\]
which concludes (\ref{EQ:closeness 2}). Similarly, we obtain (\ref{EQ:closeness 1}) and finish the proof.
\end{proof}

\subsection{From isomorphisms to rigid isomorphisms}\label{ssec:rigid-geom-ideals}

Now we prove that any isomorphism between geometric ideals is always a rigid isomorphism, and hence conclude the proof for Theorem \ref{thm:rigidity for geom ideals}. We will follow the outline of the proof for \cite[Theorem 1.2]{BBFKVW22} with the following extra piece:

\begin{lem}\label{lem:app unit for alg}
   With that notation as above, for any $L\in\L_{X}$ and $\epsilon>0$, there
   exist $L'\in\L_{Y}$ such that $\|\chi_{L}-\Phi^{-1}(\chi_{L'})\chi_{L}\|<\epsilon$.
\end{lem}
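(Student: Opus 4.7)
The plan is to transport the approximate unit for $I_Y$ described in Lemma \ref{lem:app unit} back to $I_X$ via the $\ast$-isomorphism $\Phi^{-1}$, evaluated on the particular element $\Phi(\chi_L) \in I_Y$. Before doing so, I would first verify that $\chi_L$ genuinely belongs to $I_X$. Since $L \in \L_X = \L(\I_X)$, Proposition \ref{prop:ideals in space} gives $\I(\L_X) = \I_X$, and in particular $\Delta_L \subseteq L \times L \in \I_X$. Thus $\chi_L$ has support in $\I_X$, so $\chi_L \in \CC_u[X, \I_X] \subseteq I_X$, and therefore $\Phi(\chi_L)$ is a well-defined element of $I_Y$ to which the approximate unit can be applied.

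Next I would apply Lemma \ref{lem:app unit} to the image $\Phi(\chi_L) \in I_Y$: since $\{\chi_{L'}\}_{L' \in \L_Y}$ is an approximate unit for $I_Y$, there exists $L' \in \L_Y$ with
\[
\|\Phi(\chi_L) - \chi_{L'}\Phi(\chi_L)\| < \epsilon.
\]
Using that $\Phi^{-1}$ is an isometric $\ast$-homomorphism, multiplicativity gives $\Phi^{-1}(\chi_{L'}\Phi(\chi_L)) = \Phi^{-1}(\chi_{L'})\,\chi_L$, and hence
\[
\|\chi_L - \Phi^{-1}(\chi_{L'})\chi_L\| = \|\Phi(\chi_L) - \chi_{L'}\Phi(\chi_L)\| < \epsilon,
\]
which is the desired bound. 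There is no real obstacle here; the content of the lemma is simply that one can localise to a ``window'' $L' \in \L_Y$ adapted to $L \in \L_X$, and this follows directly from the concrete approximate units supplied by Lemma \ref{lem:app unit} together with the fact that $\Phi$ is a $\ast$-isomorphism between $I_X$ and $I_Y$.
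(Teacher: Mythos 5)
Your argument is correct and is essentially the paper's proof: the paper simply notes that $\{\Phi^{-1}(\chi_{L'})\}_{L'\in\L_Y}$ is an approximate unit for $I_X$ (being the image under the $\ast$-isomorphism $\Phi^{-1}$ of the approximate unit from Lemma \ref{lem:app unit}) and applies it to $\chi_L\in I_X$, which is exactly your computation pushed through $\Phi$. Your extra verification that $\chi_L\in I_X$ is a harmless (and correct) elaboration, not a different route.
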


\begin{proof}
  By Lemma \ref{lem:app unit}, $\{\chi_{L'} ~|~L'\in \L_{Y}\}$ is an approximate unit for $I_Y$.
  Hence, $\{\Phi^{-1}(\chi_{L'}) ~|~ L'\in\L_{Y}\}$ is an approximate unit for $I_X$, which concludes the proof.
\end{proof}

\begin{prop}\label{prop:isom implies rigid isom}
With the same notation as above, assume that $I_X$ and $I_Y$ are isomorphic. Then they are rigidly isomorphic.
\end{prop}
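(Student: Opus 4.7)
The plan is to adapt the strategy of \cite[Theorem 1.2]{BBFKVW22} for the unital case to the present non-unital setting, replacing the roles of the units of $C^*_u(X)$ and $C^*_u(Y)$ by the concrete approximate units provided by Lemma~\ref{lem:app unit for alg}. By Lemma~\ref{lem:sp impl}, we may spatially implement $\Phi$ by a unitary $U:\ell^2(X)\to\ell^2(Y)$; set $v_x:=U\delta_x$. To verify Definition~\ref{defn:rigid isom}, it suffices to establish the first rigidity condition, namely that for every $L\in\L_X$,
\[
\inf_{x\in L}\sup_{y\in Y}|\langle v_x,\delta_y\rangle|>0,
\]
since the symmetric condition for $U^*$ then follows by applying the same argument to $\Phi^{-1}$.

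Arguing by contradiction, suppose that for some $L\in\L_X$ there exist distinct $x_n\in L$ with $\|v_{x_n}\|_\infty\to 0$; set $v_n:=v_{x_n}$. For every $M\subseteq\NN$, the operator $\SOT\text{-}\sum_{n\in M}e_{x_n x_n}$ has support in $\Delta_L\in\I_X$, so it lies in $I_X$, and applying $\Phi$ yields $\SOT\text{-}\sum_{n\in M}v_n\otimes v_n^*\in I_Y\subseteq C^*_u(Y)$. Lemma~\ref{lem:equi-app} then produces a single $r_0>0$ such that each such sum is $\varepsilon$-$r_0$-approximable for a fixed small $\varepsilon$ (say $\varepsilon=1/5$); specializing to $M=\{n\}$ and using $\|\chi_A(v_n\otimes v_n^*)\chi_B\|=\|\chi_Av_n\|\cdot\|\chi_Bv_n\|$ gives the key estimate
\[
\|\chi_Av_n\|\cdot\|\chi_Bv_n\|\le\varepsilon\quad\text{for every $r_0$-separated pair $A,B\subseteq Y$ and every $n$.}
\]
In parallel, Lemma~\ref{lem:app unit for alg} applied to $L$ produces $L'\in\L_Y$ with $\|\chi_L-\Phi^{-1}(\chi_{L'})\chi_L\|$ arbitrarily small; combining this with its adjoint and using $e_{x_n x_n}=\chi_L e_{x_n x_n}\chi_L$, a short calculation shows $\|\chi_{Y\setminus L'}v_n\|$ is uniformly small in $n$, so each $v_n$ is essentially supported on $L'$.

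The main obstacle is to derive the contradiction by producing, for some sufficiently large $n$, two $r_0$-separated subsets $A_n,B_n\subseteq Y$ with $\|\chi_{A_n}v_n\|\cdot\|\chi_{B_n}v_n\|>\varepsilon$, violating the displayed estimate. This is where bounded geometry of $Y$ is decisive: the uniform bound $|B(y,r_0)|\le N_{r_0}$ implies that the mass of $v_n$ in any radius-$r_0$ ball is at most $N_{r_0}\|v_n\|_\infty^2$, which tends to zero as $n\to\infty$. Since $v_n$ is a unit vector essentially concentrated on $L'$, its mass must spread across many well-separated pieces, and the desired $A_n,B_n$ should arise from a coarse partition of $L'$ built around a maximal $3r_0$-separated net, combined with a pigeon-hole or ``growing-ball'' argument locating an annular region of negligible mass separating two subsets of balanced mass. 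Making this partitioning step precise in the coarse geometry of $L'$ is the technical heart of the proof; once $U$ is handled, the statement for $U^*$ is obtained by the verbatim argument applied to $\Phi^{-1}$.
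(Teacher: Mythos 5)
Your reductions up to the ``main obstacle'' are fine (spatial implementation, passing sums $\SOT$-$\sum_{n\in M}e_{x_nx_n}$ through $\Phi$, Lemma~\ref{lem:equi-app} giving a uniform $r_0$ with $\|\chi_A v_n\|\cdot\|\chi_B v_n\|\le\varepsilon$ for $r_0$-separated $A,B$, and Lemma~\ref{lem:app unit for alg} forcing $v_n$ to be essentially supported on some $L'$). But the step you defer as ``the technical heart'' is not merely technical: as stated it is false, so no contradiction can be extracted from the ingredients you have isolated. Take $Y$ to contain a coarse union of bounded-degree expanders $G_n$ with $|G_n|\to\infty$ and uniform spectral gap, and let $v_n$ be the unit vector uniformly spread over $G_n$. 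Then $\|v_n\|_\infty\to 0$, the rank-one projections $v_nv_n^*$ (and all their subset sums) are uniformly $\varepsilon$-$r$-approximable for every $\varepsilon$ -- approximate the averaging projection by a polynomial in the Laplacian, whose propagation is its degree -- and the $v_n$ are orthonormal and supported in a fixed $L'$. Yet for $r_0$ large relative to the spectral gap, \emph{every} pair of $r_0$-separated subsets $A,B\subseteq G_n$ satisfies $\|\chi_A v_n\|\cdot\|\chi_B v_n\|\to 0$: by expansion, the $r_0/2$-neighbourhoods of two sets each carrying a definite fraction of the mass would both exceed half of $G_n$ and hence intersect. So ``small $\ell^\infty$-norm plus equi-approximability plus bounded geometry'' does not imply splittability, and your heuristic (mass in each $r_0$-ball is at most $N_{r_0}\|v_n\|_\infty^2$, hence the mass can be regrouped into two separated balanced halves, via a $3r_0$-net and an annulus of negligible mass) breaks precisely here: on an expander every width-$r_0$ annulus around a large ball carries a definite fraction of the mass, and separated unions of net-cells cannot both be large. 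These $v_n$ are exactly the classical ghost projections, which is why they satisfy all your derived estimates; ruling them out as images $U\delta_{x_n}$ requires using more of the structure of $\Phi$ than your outline retains.

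This is also where your route genuinely diverges from the paper's proof, which avoids any splitting on the $Y$-side. There one fixes $x\in L$, applies $\Phi^{-1}$ to the family $\{e_{yy}\}_{y\in L'}$ (equi-approximable by Lemma~\ref{lem:equi-app}), projects onto the \emph{finite-dimensional} space $\ell^2(B_X(x,r))$, and uses the vector-balancing lemma \cite[Lemma 2.1]{BBFKVW22} together with \cite[Lemma 3.1]{BBFKVW22} to show that the contribution of the indices $y$ with $\|\Phi^{-1}(e_{yy})\delta_x\|<\delta$ is small; combined with Lemma~\ref{lem:app unit for alg} this yields the quantitative bound $\inf_{x\in L}\sup_{y\in L'}|\langle U\delta_x,\delta_y\rangle|\ge 1/(20N_r)$. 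The finite dimensionality of $\ell^2(B_X(x,r))$ (bounded geometry of $X$) is what powers that balancing argument, and your proposal has no substitute for it. To repair your approach you would need either to import such a balancing/concentration step or to use the isomorphism $\Phi^{-1}$ on the $Y$-side rank-ones as the paper does; as written, the argument has a genuine gap.
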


\begin{proof}
Fixing $L\in\L_{X}$ and $\epsilon>0$, take $L'\in\L_{Y}$ as in Lemma~\ref{lem:app unit for alg}. Consider $\{\Phi^{-1}(e_{yy})\}_{y\in L'}\subseteq I_X$, which is a sequence of orthogonal projections in $C^{*}_{u}(X)$ and satisfies the condition in Lemma~\ref{lem:equi-app}. Hence, there is $r>0$ such that
\[
\Phi^{-1}(\chi_{A}) = \Phi^{-1}\big(\SOT\text{-}\sum_{y\in A} e_{yy}\big) = \SOT\text{-}\sum_{y\in A} \Phi^{-1}(e_{yy})
\]
is $\epsilon$-$r$-approximable for any $A \subseteq L'$.

  Set $N_{r}:= \sup_{x\in X}|B_{X}(x,r)|$. It follows from Lemma \ref{lem:app unit for alg} that for any $x\in L$, we have
  \begin{equation}
    \label{eq:yellowstr}
    \|\chi_{\{x\}}-\Phi^{-1}(\chi_{L'})\chi_{\{x\}}\| = \|\chi_{\{x\}}\chi_{L}-\Phi^{-1}(\chi_{L'})\chi_{\{x\}}\chi_{L}\|\leq\epsilon.
  \end{equation}
  Taking $\delta\leq\frac{\epsilon}{2N_{r}}$, we denote
  \[
    M(x,\delta) :=\{y\in L'\;|\;
    \|\Phi^{- 1}(e_{yy})\delta_{x}\|\geq\delta\}\quad\text{and}\quad
    M'(x,\delta):= L'\backslash M(x,\delta).
\]
Let $\pi:\ell^{2}(X)\rightarrow\ell^{2}(B_{X}(x,r))$ be the canonical orthogonal projection. Define
$\mu:\P(M'(x,\delta))\rightarrow\ell^{2}(B_{X}(x,r))$ by
\[
\mu(A) = \pi(\Phi^{-1}(\chi_{A})\delta_{x}),\text{ for all } A\subseteq M'(x,\delta).
\]
Then we have $\|\mu(\{y\}) \|= \|\pi(\Phi^{-1}(e_{yy})\delta_{x})\|\leq\|\Phi^{-1}(e_{yy})\delta_{x}\|<\delta$ for all $y\in M'(x,\delta)$.

Since $\frac{\mu(M'(x,\delta))}{2}$ belongs to the convex hull of the range of $\mu$ and
$\ell^{2}(B_{X}(x,r))$ has real dimension at most $2N_{r}$, \cite[Lemma 2.1]{BBFKVW22} implies that there exists $A\subseteq M'(x,\delta)$ such that
\begin{equation}\label{EQ:est2}
\left\|\mu(A)- \frac{\mu(M'(x,\delta))}{2}\right\|<2N_{r}\cdot \delta \leq\epsilon.
\end{equation}
Moreover,
\begin{align}\label{EQ:est3}
  \left\|\mu(A)- \frac{\mu(M'(x,\delta))}{2}\right\|& = \left\|\pi  \left(\left(\Phi^{-
1}(\chi_{A})- \frac{1}{2}\Phi^{- 1}(\chi_{M'(x,\delta)})\right)\delta_{x}\right)\right\|\\ \nonumber & =
\left\|\pi  \left(\left(\frac{1}{2}\Phi^{-1}(\chi_{A})- \frac{1}{2}\Phi^{-
1}(\chi_{M'(x,\delta)\backslash A})\right)\delta_{x}\right)\right\|.
\end{align}
Since $\Phi^{-1}(\chi_{A})$ and $\Phi^{-1}(\chi_{M'(x,\delta)\backslash A})$ are
$\epsilon$-$r$-approximable, the convex combination $\frac{1}{2}\Phi^{-1}(\chi_{A})-
\frac{1}{2}\Phi^{- 1}(\chi_{M'(x,\delta)\backslash A})$ is also $\epsilon$-$r$-approximable.
Hence we have
\begin{equation}\label{EQ:est4}
\left\|(1-\pi)\left(\left(\frac{1}{2}\Phi^{-1}(\chi_{A})- \frac{1}{2}\Phi^{-
  1}(\chi_{M'(x,\delta)\backslash A})\right)\delta_{x}\right)\right\|<\epsilon.
\end{equation}
Using the fact that $\Phi^{-1}(\chi_{A})\Phi^{-1}(\chi_{M'(x,\delta)}) = \Phi^{-1}(\chi_{A})$ together with (\ref{EQ:est2})-(\ref{EQ:est4}), we obtain
\begin{align*}
  & \left\|\Phi^{-1}(\chi_{A})\Phi^{- 1}(\chi_{M'(x,\delta)})\delta_{x} -
    \frac{1}{2}\Phi^{- 1}(\chi_{M'(x,\delta)})\delta_{x} \right\|\\
  &\leq\left\|(1-\pi)\left(\left(\frac{1}{2}\Phi^{-1}(\chi_{A})- \frac{1}{2}\Phi^{-
  1}(\chi_{M'(x,\delta)\backslash A})\right)\delta_{x}\right)\right\|+ \left\|\mu(A)- \frac{\mu(M'(x,\delta))}{2}\right\|<2\epsilon.
\end{align*}
Hence \cite[Lemma 3.1]{BBFKVW22} implies that
$\|\Phi^{- 1}(\chi_{M'(x,\delta)})\delta_{x}\|<4\epsilon$. Moreover, note that
\[
\Phi^{-1}(\chi_{M'(x,\delta)})\delta_{x}+\Phi^{-1}(\chi_{M(x,\delta)})\delta_{x} = \Phi^{-1}(\chi_{L'})\delta_{x}.
\]
Hence combining \eqref{eq:yellowstr}, we obtain $\|\Phi^{-1}(\chi_{M(x,\delta)})\delta_{x}\|\geq1-5\epsilon$.

Taking $\epsilon=1/10$, we have
$\|\Phi^{- 1}(\chi_{M(x,\delta)})\delta_{x}\|\geq\frac{1}{2}$. Set
$\delta = \frac{1}{20N_{r}}$, then $M(x,\delta)$ is non-empty for any $x\in L$.
Hence we obtain
\[
\inf_{x\in L}\sup_{y\in L'} \|\Phi^{-1}(e_{yy})\delta_{x}\|\geq\frac{1}{20N_{r}}.
\]
Finally, note that
\[
\|\Phi^{-1}(e_{yy})\delta_{x}\|^{2} = |\langle
U^{*}\delta_{y},\delta_{x}\rangle|^{2} = |\langle U\delta_{x},\delta_{y} \rangle|^{2}.
\]
Hence $\Phi$ is a rigid isomorphism.
\end{proof}

\begin{proof}[Proof of Theorem \ref{thm:rigidity for geom ideals}]
By Proposition \ref{prop:isom implies rigid isom}, we know that $\Phi$ is a rigid isomorphism. Hence applying Proposition \ref{prop:rigid isom}, we conclude the proof.
\end{proof}

\section{Rigidity for stable geometric ideals}\label{sec:stable}

Now we move to the case of stable isomorphism and finish the proof of Theorem \ref{introthm} ``(1) $\Rightarrow$ (2)''. Recall that given a coarse space $(X,\E)$ of bounded geometry, the \emph{stable uniform Roe algebra} of $(X,\E)$ is defined to be
\[
C^*_s(X,\E):=C^*_u(X,\E) \otimes \K(\H),
\]
where $\H$ is a separable infinite dimensional Hilbert space. Note that $C^*_s(X,\E)$ can be regarded as a $C^*$-subalgebra of $\B(\ell^2(X;\H))$. Similar to the case of the uniform Roe algebra, there is a dense $\ast$-subalgebra $\CC_s[X,\E]$ in $C^*_s(X,\E)$ under this viewpoint. More precisely, an operator $T\in \B(\ell^2(X;\H))$ belongs to $\CC_s[X,\E]$ \emph{if and only if} its support (similar to the definition in Section \ref{ssec:uniform Roe alg}) belongs to $\E$ and there exists a finite-dimensional subspace $\H' \subseteq \H$ such that each matrix entry $T(x,y)$ belongs to $\B(\H')$.


Hence Theorem \ref{introthm} ``(1) $\Rightarrow$ (2)'' can be rewritten as follows:

\begin{thm}\label{thm:rigidity for stable geom ideals}
Let $(X,d_X), (Y,d_Y)$ be discrete metric spaces of bounded geometry, and $I_X, I_Y$ be geometric ideals in the uniform Roe algebras $C^*_u(X)$ and $C^*_u(Y)$, respectively. Assume that the stable geometric ideals $C^*_s(X,\I(I_X))$ and $C^*_s(Y,\I(I_Y))$ are isomorphic. Then the coarse spaces $(X,\I(I_X))$ and $(Y, \I(I_Y))$ are coarsely equivalent.
\end{thm}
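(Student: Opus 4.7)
The plan is to reduce the stable case to a coarse-space generalisation of Theorem \ref{thm:rigidity for geom ideals}, by recognising each stable uniform Roe algebra as a genuine uniform Roe algebra over a (non-metric) coarse space.

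First, I would introduce $\widetilde{X}:=X\times\NN$ endowed with the coarse structure $\widetilde{\E_X}$ generated by entourages of the form $E\times F$ with $E\in\E_{d_X}$ and $F\subseteq\NN\times\NN$ finite. A direct computation shows $\widetilde{\E_X}$ consists exactly of subsets of such products $E\times F$, so $(\widetilde{X},\widetilde{\E_X})$ has bounded geometry with $n(E\times F)\leq n(E)\cdot|F|$ on generators. Fixing an orthonormal basis of $\H$, the unitary $\ell^2(X;\H)\cong\ell^2(\widetilde{X})$ sends an element of $\CC_s[X,\E_{d_X}]$ (support in $\E_{d_X}$, matrix entries in a common finite-dimensional $\B(\H')$) to an operator on $\ell^2(\widetilde{X})$ with support in some $E\times F$ as above, giving an isomorphism $C^*_s(X,\E_{d_X})\cong C^*_u(\widetilde{X},\widetilde{\E_X})$. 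For any ideal $\I\subseteq\E_{d_X}$, the coarse structure $\widetilde{\I}$ on $\widetilde{X}$ generated by $\{E\times F ~|~ E\in\I,\,F\subseteq\NN\times\NN\text{ finite}\}$ is itself an ideal in $\widetilde{\E_X}$, and the same identification matches the geometric ideal $C^*_s(X,\I)$ with $C^*_u(\widetilde{X},\widetilde{\I})$. Hence the hypothesised $\Phi$ becomes an isomorphism between geometric ideals in the uniform Roe algebras of $\widetilde{X}$ and $\widetilde{Y}$.

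Next, I would verify that Theorem \ref{thm:rigidity for geom ideals} extends to coarse spaces of bounded geometry. The proof of Section \ref{sec:rigidity for geometric ideals} carries over once the metric notions are rephrased: ``$\varepsilon$-$r$-approximable'' becomes ``$\varepsilon$-approximable by an operator whose support lies in a fixed entourage $E$'' (so Lemma \ref{lem:equi-app} produces such an $E$ uniformly in $M\subseteq\NN$); the set $F_1$ constructed in the proof of Lemma \ref{lem:ctrl sets} is simply the entourage delivered by this equi-approximability, and ``$F_1$-separated'' is the coarse notion already defined in Section \ref{ssec:notions from coarse geometry}; the closed ball $B_X(x,r)$ in the proof of Proposition \ref{prop:isom implies rigid isom} is replaced by $\Nd_E(\{x\})$ of cardinality at most $n(E)$, so that the convex-geometric argument of \cite{BBFKVW22} still applies on a finite-dimensional subspace. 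This coarse-space version, applied to $\Phi$, yields a coarse equivalence between $(\widetilde{X},\widetilde{\I}(I_X))$ and $(\widetilde{Y},\widetilde{\I}(I_Y))$ in the sense of Definition \ref{introdefn:ce}.

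Finally, I would descend to the original coarse spaces using Corollary \ref{cor:coarse equiv general case}. Define $\F=\{f_L:L\to\widetilde{X} ~|~ L\in\L_X\}$ by $f_L(x)=(x,0)$ and $\G=\{g_{L'}:L'\to X ~|~ L'\in\L(\widetilde{\I}(I_X))\}$ by $g_{L'}(x,n)=x$. Since any $L'\in\L(\widetilde{\I}(I_X))$ satisfies $L'\subseteq L\times G$ for some $L\in\L_X$ and finite $G\subseteq\NN$, both families are admissible and coarse. The first closeness condition reduces to $\Delta_L\in\I(I_X)$, which holds by Lemma \ref{lem:Delta_L belongs to E}; the second amounts to $\{((x,0),(x,n)) ~|~ (x,n)\in L'\}\subseteq\Delta_L\times(\{0\}\times G)$ lying in $\widetilde{\I}(I_X)$, which follows from the same lemma. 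Therefore $(X,\I(I_X))$ is coarsely equivalent to $(\widetilde{X},\widetilde{\I}(I_X))$, and similarly for $Y$; composing with the coarse equivalence from the previous step proves the theorem. The primary obstacle is the second step: no conceptually new idea is required, but the metric-dependent parts of Section \ref{sec:rigidity for geometric ideals}—principally in the proofs of Lemma \ref{lem:ctrl sets} and Proposition \ref{prop:isom implies rigid isom}, together with the approximate-unit argument of Lemma \ref{lem:app unit for alg}—must be rewritten consistently in the coarse-structural language, with $\L(\widetilde{\I})$ in place of the metric-indexed family.
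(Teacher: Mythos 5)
Your proposal is correct in substance, but it takes a genuinely different route from the paper. The paper proves Theorem \ref{thm:rigidity for stable geom ideals} directly on $\ell^2(X;\H)$, adapting Section \ref{sec:rigidity for geometric ideals} to the stabilised ideals in the spirit of \cite[Theorem 4.1]{BBFKVW22}: the rigidity condition acquires finite-rank projections $p_L$ (Proposition \ref{prop:stable isom implies rigid isom}), and Lemma \ref{lem:stable ctrl sets} and Proposition \ref{prop:stable rigid isom} replace Lemma \ref{lem:ctrl sets} and Proposition \ref{prop:rigid isom}. You instead absorb the stabilisation into the coarse structure: you identify $C^*_s(X,\I)$ with $C^*_u(X\times\NN,\widetilde{\I})$, invoke a coarse-space version of Theorem \ref{thm:rigidity for geom ideals}, and descend through the explicit coarse equivalence between $(X,\I)$ and $(X\times\NN,\widetilde{\I})$; your verification of the latter via Corollary \ref{cor:coarse equiv general case} is correct, and the remaining composition of two coarse equivalences in the sense of Definition \ref{introdefn:ce} is a routine check with the same corollary. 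What you gain is one statement subsuming both Theorem \ref{thm:rigidity for geom ideals} and Theorem \ref{thm:rigidity for stable geom ideals}; the price is exactly the step you flag, and it is comparable in length to the paper's Section \ref{sec:stable} --- indeed the projections $p_L$ there perform the same bookkeeping as your finite sets $F\subseteq\NN$. Two points deserve care. First, the equi-approximability Lemma \ref{lem:equi-app} must hold over the ambient coarse space: its proof uses a countable cofinal family of entourages, which your $\widetilde{\E}_X$ does possess (the sets $E_r\times F$ with $r\in\NN$ and $F\subseteq\NN\times\NN$ finite), so the metric argument carries over verbatim; for arbitrary bounded-geometry coarse spaces one should instead quote the coarse-space version from \cite{BF21}. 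Second, your claim that the unitary $\ell^2(X;\H)\cong\ell^2(\widetilde{X})$ sends $\CC_s[X,\E_{d_X}]$ into operators supported in some $E\times F$ is not literally true when the finite-dimensional subspace $\H'$ is not spanned by finitely many of the chosen basis vectors; what you actually need (and what is true) is that both $C^*$-closures coincide with the closed span of $\{a\otimes e_{mn} \mid a\in\CC_u[X,\E_{d_X}]\}$, so the identification of the $C^*$-algebras, and likewise of $C^*_s(X,\I)$ with $C^*_u(X\times\NN,\widetilde{\I})$, is unaffected.
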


The proof is similar to that for Theorem \ref{thm:rigidity for geom ideals} but more technical, which follows the outline of that for \cite[Theorem 4.1]{BBFKVW22}. Again throughout the rest of this section, let $(X,d_X), (Y,d_Y)$ be discrete metric spaces of bounded geometry, and $I_X, I_Y$ be geometric ideals in the uniform Roe algebras $C^*_u(X)$ and $C^*_u(Y)$, respectively. Denote $\I_X:=\I(I_X), \I_Y:=\I(I_Y)$ and $\L_X:=\L(\I_X), \L_Y:=\L(\I_Y)$.
Also let $\Phi\colon C^*_s(X,\I(I_X)) \to C^*_s(Y,\I(I_Y))$ be an isomorphism with the inverse $\Psi:=\Phi^{-1}$.

Firstly, we have the following analogue of Proposition \ref{prop:isom implies rigid isom}.

\begin{prop}\label{prop:stable isom implies rigid isom}
With the same notation as above, for any unit vector $\xi \in \H$ the following holds:
\begin{enumerate}
 \item for any $L \in \L_X$, there exist $f_L: L \to Y$ and a finite-rank projection $p_L$ on $\H$ such that
 \[
 \inf_{x\in L} \| \Phi(\chi_{\{x\}} \otimes p_\xi)(\chi_{\{f_{L}(x)\}} \otimes p_L) \| > 0;
 \]
 \item for any $L' \in \L_Y$, there exist $g_{L'}: L' \to X$ and a finite-rank projection $p_{L'}$ on $\H$ such that
 \[
 \inf_{y\in L'} \| \Phi(\chi_{\{y\}} \otimes p_\xi)(\chi_{\{g_{L'}(y)\}} \otimes p_{L'}) \| > 0.
 \]
\end{enumerate}
Here $p_\xi$ is the orthogonal projection from $\H$ onto $\CC\xi$.
\end{prop}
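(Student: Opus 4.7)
The strategy is to adapt the proof of Proposition \ref{prop:isom implies rigid isom} to the stable setting, replacing $\delta_x$ by $\delta_x \otimes \xi \in \ell^2(X;\H)$ and the rank-one projections $e_{yy}$ by rank-one projections of the form $e_{yy} \otimes p_\eta \in \B(\ell^2(Y;\H))$. By symmetry (applying the same argument to $\Psi$ with the roles of $X$ and $Y$ interchanged), it suffices to prove (1); statement (2) then follows from the symmetric claim, which we read with $\Psi$ in place of $\Phi$ (presuming a typo in the statement).

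Fix $L \in \L_X$ and $\epsilon>0$. By Lemma \ref{lem:app unit} combined with the standard approximate unit for $\K(\H)$, the family $\{\chi_{L'} \otimes q ~|~ L' \in \L_Y,\, q \text{ a finite-rank projection on } \H\}$ is an approximate unit for $C^*_s(Y, \I_Y)$, so its pullback by $\Psi$ is an approximate unit for $C^*_s(X, \I_X)$. Hence one can pick $L' \in \L_Y$ and a finite-rank projection $q$ on $\H$ such that
\[
\|(\chi_L \otimes p_\xi) - \Psi(\chi_{L'} \otimes q)(\chi_L \otimes p_\xi)\| < \epsilon.
\]
Fix an orthonormal basis $\{\eta_1, \ldots, \eta_N\}$ of $q\H$; then $\{e_{yy} \otimes p_{\eta_j}\}_{(y,j) \in L' \times \{1,\ldots,N\}}$ is a family of pairwise orthogonal rank-one projections whose SOT-sum equals $\chi_{L'} \otimes q$. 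For each $A \subseteq L' \times \{1,\ldots,N\}$, set $P_A := \SOT\text{-}\sum_{(y,j)\in A}\Psi(e_{yy} \otimes p_{\eta_j}) \in C^*_s(X, \I_X)$, and apply a stable-algebra analogue of Lemma \ref{lem:equi-app} (proved by the same argument as in \cite[Section 4]{BBFKVW22}) to obtain $r>0$ and a finite-rank projection $p_0$ on $\H$ such that each $P_A$ is within $\epsilon$ in operator norm of an operator with propagation at most $r$ whose matrix entries lie in $p_0 \B(\H) p_0$.

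Following the pattern of Proposition \ref{prop:isom implies rigid isom}, for each $x \in L$ and $\delta>0$ set
\[
M(x, \delta) := \{(y,j) \in L' \times \{1,\ldots,N\} ~|~ \|\Psi(e_{yy} \otimes p_{\eta_j})(\delta_x \otimes \xi)\| \geq \delta\},
\]
with complement $M'(x,\delta) := (L' \times \{1,\ldots,N\}) \setminus M(x,\delta)$. Let $\pi$ denote the orthogonal projection of $\ell^2(X;\H)$ onto the finite-dimensional subspace $\ell^2(B_X(x,r); p_0\H)$, of real dimension at most $D := 2 N_r \cdot \mathrm{rank}(p_0)$, and define $\mu : \P(M'(x,\delta)) \to \ell^2(B_X(x,r); p_0\H)$ by $\mu(A) := \pi(P_A (\delta_x \otimes \xi))$. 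Choosing $\delta \leq \epsilon/D$ guarantees $\|\mu(\{(y,j)\})\| < \delta$ for each $(y,j) \in M'(x,\delta)$, and \cite[Lemma 2.1]{BBFKVW22} then provides $A \subseteq M'(x,\delta)$ with $\|\mu(A) - \mu(M'(x,\delta))/2\| < \epsilon$. Combining this with the equi-approximability estimate on the complement of $\pi$ and \cite[Lemma 3.1]{BBFKVW22} yields $\|P_{M'(x,\delta)}(\delta_x \otimes \xi)\| < 4\epsilon$, so the first-step approximation forces $\|P_{M(x,\delta)}(\delta_x \otimes \xi)\| \geq 1 - 5\epsilon$. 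Fixing $\epsilon = 1/10$ ensures $M(x,\delta) \neq \emptyset$ with $\delta$ bounded below uniformly in $x \in L$.

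For each $x \in L$, pick any $(y,j) \in M(x,\delta)$ and set $f_L(x) := y$ and $p_L := q$; crucially, $p_L$ is finite-rank and \emph{independent of $x$}. The identity
\[
\|\Phi(\chi_{\{x\}} \otimes p_\xi)(e_{yy} \otimes p_{\eta_j})\| = \|(\chi_{\{x\}} \otimes p_\xi)\Psi(e_{yy} \otimes p_{\eta_j})\| = \|\Psi(e_{yy} \otimes p_{\eta_j})(\delta_x \otimes \xi)\|
\]
(the first equality from $\Phi$ being isometric, the second from $\chi_{\{x\}} \otimes p_\xi$ being the rank-one projection onto $\delta_x \otimes \xi$), combined with $e_{yy} \otimes p_{\eta_j} \leq \chi_{\{y\}} \otimes p_L$, gives
\[
\|\Phi(\chi_{\{x\}} \otimes p_\xi)(\chi_{\{f_L(x)\}} \otimes p_L)\| \geq \delta
\]
uniformly in $x \in L$. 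The main obstacle is ensuring that the finite-rank projection $p_L$ can be chosen uniformly in $x$; this is achieved precisely by extracting the single projection $q$ at the very start from the approximate-unit approximation of $\chi_L \otimes p_\xi$, rather than selecting a per-point projection later in the argument.
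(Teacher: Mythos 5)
Your proposal is correct and follows essentially the same route as the paper: first use the approximate unit $\{\chi_{L'}\otimes q\}$ of $C^*_s(Y,\I(I_Y))$, pulled back through $\Psi$, to fix $L'$ and a single finite-rank projection uniformly over $x\in L$, and then run the equi-approximability/concentration argument of \cite{BBFKVW22}, which the paper simply outsources to \cite[Lemma 4.3]{BBFKVW22} and which you spell out following the pattern of Proposition \ref{prop:isom implies rigid isom}. Your reading of item (2) with $\Psi=\Phi^{-1}$ in place of $\Phi$ also matches the paper's intent.
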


The proof is similar to that for \cite[Theorem 4.1]{BBFKVW22} (with the same idea presented in the proof of Proposition \ref{prop:isom implies rigid isom}), and hence we only provide a sketch here.
\begin{proof}[Sketch of proof for Proposition \ref{prop:stable isom implies rigid isom}]
Note that the set
\[
\{\chi_{L'} \otimes q ~|~ L' \in \L_Y, q \text{ is a finite dimensional projection on } \H\}
\]
is an approximate unit for $I_Y \otimes \K(\H)$. Hence given $\varepsilon>0$ and $L \in \L_X$, there exists $L' \in \L_Y$ and a finite dimensional projection $p_L$ on $\H$ such that
\[
\|\Psi(\chi_{L'} \otimes p_L)(\chi_L \otimes p_\xi) - \chi_L \otimes p_\xi\| < \varepsilon.
\]
Therefore, we have
\[
\|(\Id_{\ell^2(L';\H)} - \chi_{L'}\otimes p_L) \Phi(\chi_{\{x\}} \otimes \xi)\| < \varepsilon, \quad \forall x\in L.
\]
This provides a similar condition as in the hypothesis of \cite[Lemma 4.3]{BBFKVW22}, which allows us to apply the same argument therein to obtain a constant $\delta=\delta(\varepsilon, L)>0$ and a map $f_L: L \to L'$ satisfying
\[
\|\Phi(\chi_{\{x\}} \otimes p_\xi) (\chi_{\{f_{L}(x)\}} \otimes p_L)\| = \|\Psi(\chi_{\{f_{L}(x)\}} \otimes p_L)(\delta_x \otimes \xi)\| > \delta, \quad \forall x\in L.
\]
Hence we obtain (1), and (2) can be treated similarly.
\end{proof}

Now we show that the conditions in Proposition \ref{prop:stable isom implies rigid isom} imply the required coarse equivalence, analogous to Proposition \ref{prop:rigid isom}.

\begin{prop}\label{prop:stable rigid isom}
With the same notation as above, assume that for any unit vector $\xi \in \H$, condition (1) and (2) in Proposition \ref{prop:stable isom implies rigid isom} hold with functions $f_L$ and $g_{L'}$ for $L \in \L_X$ and $L' \in \L_Y$. Then the family $\F:=\{f_L~|~ L \in \L_X\}$ is a coarse equivalence (in the sense of Definition \ref{defn:coarse equiv using families}) from $(X,\I_X)$ to $(Y,\I_Y)$ with a coarse inverse $\G:=\{g_{L'} ~|~ L' \in \L_Y\}$.
\end{prop}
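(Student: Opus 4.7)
The plan is to mirror the argument of Proposition~\ref{prop:rigid isom}, replacing the scalar matrix coefficients $|\langle U\delta_{x},\delta_{y}\rangle|$ by the operator-valued analogues $\|\Phi(\chi_{\{x\}}\otimes p)(\chi_{\{y\}}\otimes q)\|$ for finite-rank projections $p,q$ on $\H$. The central ingredient will be the following stable version of Lemma~\ref{lem:ctrl sets}: for any $E\in\I_{X}$, $\delta>0$ and finite-rank projections $p,q$ on $\H$, the set
\[
F_{\Phi}(E,\delta,p,q):=\left\{(y_{1},y_{2})\in Y\times Y\;\middle\vert\;
\begin{array}{l}
\exists\,(x_{1},x_{2})\in E\text{ such that}\\
\|\Phi(\chi_{\{x_{i}\}}\otimes p)(\chi_{\{y_{i}\}}\otimes q)\|\geq\delta,\ i=1,2
\end{array}\right\}
\]
belongs to $\I_{Y}$.

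I would prove this stable controlled-set lemma by contradiction, in direct analogy with Lemma~\ref{lem:ctrl sets}. Choose witnesses $(x_{1}^{F},x_{2}^{F})\in E$ and $(y_{1}^{F},y_{2}^{F})\notin F$ for each $F\in\I_{Y}$, invoke \cite[Lemma~4.10]{BF21} to extract cofinal subsets on which the $x_{i}^{F}$ are pairwise distinct, form the $\SOT$-convergent sum $T_{A}:=\SOT\text{-}\sum_{F\in A}(e_{x_{1}^{F}x_{2}^{F}}\otimes p)\in C^{*}_{s}(X,\I_{X})$, apply the stable extension of Lemma~\ref{lem:equi-app} to $\Phi(T_{A})$ to obtain a uniform propagation estimate, combine with the approximate unit $\{\chi_{L'}\otimes q_{0}\}$ for $C^{*}_{s}(Y,\I_{Y})$, and contradict the pointwise matrix-coefficient bound at $(y_{1}^{F_{1}},y_{2}^{F_{1}})$. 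For the scalar estimate at the final step one uses that $\Phi$ restricts to an isomorphism of the compact ideals $\K(\ell^{2}(X;\H))\to\K(\ell^{2}(Y;\H))$ (both $C^{*}_{s}$-algebras contain all compacts) and is therefore spatially implemented by a unitary $U$ on compacts; consequently $\Phi(e_{x_{1}x_{2}}\otimes p_{\xi})$ is the rank-one operator $|U(\delta_{x_{1}}\otimes\xi)\rangle\langle U(\delta_{x_{2}}\otimes\xi)|$, and
\[
\|(\chi_{\{y_{1}\}}\otimes q)\Phi(e_{x_{1}x_{2}}\otimes p_{\xi})(\chi_{\{y_{2}\}}\otimes q)\|=\prod_{i=1}^{2}\|\Phi(\chi_{\{x_{i}\}}\otimes p_{\xi})(\chi_{\{y_{i}\}}\otimes q)\|\geq\delta^{2}
\]
yields the required contradiction.

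Granted the lemma, admissibility and coarseness of $\F$ follow exactly as in the proof of Proposition~\ref{prop:rigid isom}. Writing $\delta_{L}>0$ for the infimum in Proposition~\ref{prop:stable isom implies rigid isom}(1) associated to $L\in\L_{X}$, and setting $q:=p_{L_{1}}\vee p_{L_{2}}$ for $L_{1},L_{2}\in\L_{X}$, one has $\{(f_{L_{1}}(x),f_{L_{2}}(x))\mid x\in L_{1}\cap L_{2}\}\subseteq F_{\Phi}(\Delta_{L_{1}\cap L_{2}},\min\{\delta_{L_{1}},\delta_{L_{2}}\},p_{\xi},q)\in\I_{Y}$, giving admissibility; and for $E\in\I_{X}$ with $E\subseteq L\times L$ one has $(f_{L}\times f_{L})(E)\subseteq F_{\Phi}(E,\delta_{L},p_{\xi},p_{L})\in\I_{Y}$, giving $\I_{X}\subseteq\F^{*}\I_{Y}$. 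The coarseness and admissibility of $\G$ are symmetric, via the analogous set $F_{\Psi}$ built from $\Psi=\Phi^{-1}$. For closeness~(\ref{EQ:closeness 2}), fix $L'\in\L_{Y}$ and put $L'':=g_{L'}(L')\in\L_{X}$ (Lemma~\ref{lem:pullback}). Taking adjoints in the defining inequality for $g_{L'}$ at $y\in L'$ and applying $\Phi$ yields $\|\Phi(\chi_{\{g_{L'}(y)\}}\otimes p_{L'})(\chi_{\{y\}}\otimes p_{\xi})\|\geq\delta_{L'}$, while the defining inequality for $f_{L''}$ at $g_{L'}(y)\in L''$ gives $\|\Phi(\chi_{\{g_{L'}(y)\}}\otimes p_{\xi})(\chi_{\{f_{L''}(g_{L'}(y))\}}\otimes p_{L''})\|\geq\delta_{L''}$. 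Enlarging projections to $p:=p_{\xi}\vee p_{L'}$ and $q:=p_{\xi}\vee p_{L''}$ (which only increases the operator norms, since $\|\Phi(A)B\|\leq\|\Phi(A')B'\|$ whenever $A\leq A'$ and $B\leq B'$ are projections) places $\{(y,f_{L''}\circ g_{L'}(y))\mid y\in L'\}$ inside $F_{\Phi}(\Delta_{L''},\min\{\delta_{L'},\delta_{L''}\},p,q)\in\I_{Y}$; (\ref{EQ:closeness 1}) follows by the symmetric argument.

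The hardest part will be the stable controlled-set lemma itself. Two novelties over Lemma~\ref{lem:ctrl sets} require care: verifying that $T_{A}=\SOT\text{-}\sum_{F\in A}(e_{x_{1}^{F}x_{2}^{F}}\otimes p)$ genuinely lies in $C^{*}_{s}(X,\I_{X})$ with $\Phi(T_{A})$ enjoying a uniform $\varepsilon$-$r$-approximability estimate (which follows by block-diagonal reduction to Lemma~\ref{lem:equi-app}, since $p$ is finite-rank and the rank-one factors have orthogonal supports), and establishing the operator-valued factorisation of $\|(\chi_{\{y_1\}}\otimes q)\Phi(\cdot)(\chi_{\{y_2\}}\otimes q)\|$ at the final contradiction step, for which the spatial implementation of $\Phi$ on the compact ideals is the decisive tool. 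Once these two ingredients are in place, the argument transcribes Section~\ref{sec:rigidity for geometric ideals} with the tensor factor $p$ (or $q$) carried along at each step.
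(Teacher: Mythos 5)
Your overall strategy is the paper's: establish a stable controlled-set lemma and then transcribe the proof of Proposition \ref{prop:rigid isom}. The application half of your argument is sound and parallels the paper's proof of Proposition \ref{prop:stable rigid isom} (which phrases the same conditions as matrix coefficients of the implementing unitary $U$ against vectors in a finite-dimensional subspace, via Lemma \ref{lem:stable ctrl sets}): admissibility and coarseness of $\F$, $\G$ only use $p=p_\xi$, and your closeness step via enlarged projections together with the monotonicity $\|\Phi(A)B\|\leq\|\Phi(A')B'\|$ for projections $A\leq A'$, $B\leq B'$ is correct.

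The genuine gap is in your proof of the controlled-set lemma when $p$ has rank at least two --- exactly the case your closeness step requires, since there you must take $p=p_\xi\vee p_{L'}$. Your contradiction argument sums the operators $e_{x_1^Fx_2^F}\otimes p$ and then needs the corner bound $\|(\chi_{\{y_1\}}\otimes q)\Phi(e_{x_1x_2}\otimes p)(\chi_{\{y_2\}}\otimes q)\|\geq\delta^2$ to follow from the two hypotheses $\|\Phi(\chi_{\{x_i\}}\otimes p)(\chi_{\{y_i\}}\otimes q)\|\geq\delta$. The factorisation you invoke is valid only for rank-one $p$ (you indeed write it for $p_\xi$): for an orthonormal basis $\{\zeta_j\}$ of $p\H$ the corner is $\sum_j$ of the rank-one operators sending $(\chi_{\{y_2\}}\otimes q)U(\delta_{x_2}\otimes\zeta_j)$ to $(\chi_{\{y_1\}}\otimes q)U(\delta_{x_1}\otimes\zeta_j)$, and the hypotheses only control the $\ell^2$-norms of the two vector families; the corner can be $0$ even when both hypotheses hold (e.g.\ rank$(p)=2$ with $U(\delta_{x_1}\otimes\zeta_1)$ landing in $\delta_{y_1}\otimes q\H$ but $U(\delta_{x_2}\otimes\zeta_1)$ orthogonal to $\delta_{y_2}\otimes q\H$, and the roles of $\zeta_1,\zeta_2$ exchanged at $x_2$). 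So no contradiction is reached in the higher-rank case, and the lemma is not established in the generality your application uses (its statement is nevertheless true: your set is contained in the paper's $F(E,\delta,p\H)$ of Lemma \ref{lem:stable ctrl sets}, and ideals are closed under subsets). The repair is precisely the paper's extra step: extract witness unit vectors $v_i^F\in p\H$ (and $w_i^F\in q\H$) from the hypotheses, use finite-dimensionality of $p\H$ to perturb the $v_i^F$ into a finite set, extend the \cite[Lemma 4.10]{BF21} extraction so that also $v_i^F=v_i^{\varphi(F)}$, and run the $\SOT$-sum with the rank-one operators $e_{x_1^Fx_2^F}\otimes\big(\zeta\mapsto\langle\zeta,v_2^F\rangle v_1^F\big)$, which still lie in $\CC_s[X,\I_X]$; then the final corner is a genuine rank-one matrix coefficient and the factorisation, hence the contradiction, goes through.
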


To prove Proposition \ref{prop:stable rigid isom}, first note that there exists a unitary $U: \ell^2(X) \otimes \H \to \ell^2(Y) \otimes \H$ such that $\Phi(a) = U a U^*$ for any $a\in I_X \otimes \K(\H)$, which is similar to Lemma \ref{lem:sp impl}. Analogous to Lemma \ref{lem:ctrl sets}, we also need the following lemma:

\begin{lem}\label{lem:stable ctrl sets}
  With the notation as above, for any $E\in \I_{X}, \delta>0$ and a finite dimensional subspace $V \subseteq \H$, the
  following set
  \[
    F(E,\delta,V):=\left\{(y_{1},y_{2})\in Y\times Y\;\middle\vert\;
    \begin{array}{l}
      \text{ there exist }
      (x_{1},x_{2})\in E, \text{unit vectors } v_1, v_2 \in V \\ \text{ and unit vectors }w_1, w_2 \in \H   \text{ such that} \\
      |\langle
      U(\delta_{x_{1}}\otimes v_1),\delta_{y_{1}} \otimes w_1 \rangle|\geq\delta\text{ and }
       |\langle U(\delta_{x_{2}}\otimes v_2),\delta_{y_{2}} \otimes w_2 \rangle|\geq\delta
    \end{array}
\right\}
\]
belongs to $\I_{Y}$.
\end{lem}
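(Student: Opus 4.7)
The plan is to adapt the proof of Lemma \ref{lem:ctrl sets} to the stable setting by a contradiction argument, handling the finite-dimensional subspace $V$ via a pigeonhole step and replacing Lemma \ref{lem:equi-app} by its stable analogue, which controls finite-rank approximation on the fiber $\H$ alongside finite propagation.

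To begin, spatially implement $\Phi$ by a unitary $U: \ell^2(X;\H) \to \ell^2(Y;\H)$, and suppose toward contradiction that $F(E, \delta, V) \notin \I_Y$. Fix an orthonormal basis $\xi_1, \ldots, \xi_n$ of $V$. For each $F \in \I_Y$, pick witnesses $(x_1^F, x_2^F) \in E$, unit vectors $v_i^F \in V$, $w_i^F \in \H$ and $(y_1^F, y_2^F) \in (Y \times Y) \setminus F$ satisfying the defining inequality with constant $\delta$. Expanding $v_i^F = \sum_j c_{ij}^F \xi_j$ with $|c_{ij}^F| \leq 1$, the triangle inequality yields indices $j_i^F \in \{1, \ldots, n\}$ such that $|\langle U(\delta_{x_i^F} \otimes \xi_{j_i^F}),\, \delta_{y_i^F} \otimes w_i^F\rangle| \geq \delta/n$. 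Since $\{1, \ldots, n\}^2$ is finite, one may pass to a cofinal subset of $\I_Y$ on which $(j_1^F, j_2^F) = (j_1, j_2)$ is constant.

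Let $\theta$ denote the rank-one operator $w \mapsto \langle w, \xi_{j_2}\rangle \xi_{j_1}$ on $\H$. Since $\Phi(e_{x_1^F x_2^F} \otimes \theta) = U(e_{x_1^F x_2^F} \otimes \theta) U^*$ is rank-one with nontrivial action $U(\delta_{x_2^F} \otimes \xi_{j_2}) \mapsto U(\delta_{x_1^F} \otimes \xi_{j_1})$, the rank-one norm identity gives
\[
\|(\chi_{\{y_1^F\}} \otimes \Id_\H)\,\Phi(e_{x_1^F x_2^F} \otimes \theta)\,(\chi_{\{y_2^F\}} \otimes \Id_\H)\| \;=\; \prod_{i=1}^{2} \|(\chi_{\{y_i^F\}} \otimes \Id_\H)\, U(\delta_{x_i^F} \otimes \xi_{j_i})\| \;\geq\; (\delta/n)^2.
\]
Applying \cite[Lemma 4.10]{BF21} extracts cofinal $I, J \subseteq \I_Y$ and a map $\varphi: I \to J$ so that $\{x_1^F\}_{F \in J}$ and $\{x_2^F\}_{F \in J}$ are each distinct and $x_i^F = x_i^{\varphi(F)}$ for $F \in I$; then for any $A \subseteq J$, bounded geometry of $\I_X$ ensures $\SOT\text{-}\sum_{F \in A} e_{x_1^F x_2^F} \otimes \theta \in \CC_s[X, \I_X]$, so $\Phi$ sends it into $C^*_s(Y, \I_Y)$.

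A stable version of Lemma \ref{lem:equi-app} — the same $\SOT$/approximation argument while also tracking a uniform finite-rank projection on $\H$ — then produces, for $\varepsilon := \delta^2/(3n^2)$, an $r > 0$ and a finite-rank projection $q$ on $\H$ (which we enlarge to contain $p_{\xi_{j_1}}$ and $p_{\xi_{j_2}}$) such that every $\Phi\bigl(\SOT\text{-}\sum_{F \in A} e_{x_1^F x_2^F} \otimes \theta\bigr)$ is $\varepsilon$-close to an operator of propagation $\leq r$ whose matrix entries lie in $\B(q\H)$; applying this to singletons $A = \{F\}$ gives $\|(\chi_B \otimes \Id_\H)\, \Phi(e_{x_1^F x_2^F} \otimes \theta)\, (\chi_C \otimes \Id_\H)\| < \varepsilon$ whenever $d_Y(B, C) \geq r$. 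Fixing $L \in \L_X$ with $E \subseteq L \times L$ and applying the approximate unit from Lemma \ref{lem:app unit} (tensored with $\K(\H)$) against $\Phi(\chi_L \otimes p_{\xi_{j_1}})$ and $\Phi(\chi_L \otimes p_{\xi_{j_2}})$ produces $L' \in \L_Y$ such that $\chi_{L'} \otimes q$ absorbs these two operators up to $\varepsilon$ on both sides. Setting $F_1 := \{(y_1, y_2) \in L' \times L' \mid d_Y(y_1, y_2) < r\} \in \I_Y$ (which we may assume lies in $I$ by cofinality), the same bookkeeping as in Lemma \ref{lem:ctrl sets}, adapted to the fibered setting, produces $\|(\chi_{\{y_1^{F_1}\}} \otimes \Id_\H)\, \Phi(e_{x_1^{F_1} x_2^{F_1}} \otimes \theta)\, (\chi_{\{y_2^{F_1}\}} \otimes \Id_\H)\| < 3\varepsilon = \delta^2/n^2$, contradicting the lower bound. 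The main obstacle is the stable equi-approximability with simultaneous finite-rank fiber control; once that is in hand, the finite-dimensional parameter $V$ is absorbed by pigeonhole and the rest follows the unstable template verbatim.
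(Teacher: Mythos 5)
Your proposal is correct and follows the same contradiction scheme as the paper: pick witnesses for each $F\in\I_Y$, reduce the fiber vectors $v_i^F$ to a finite amount of data, extract cofinal $I,J$ and $\varphi$ via \cite[Lemma 4.10]{BF21}, form $\SOT$-sums of matrix units with a fixed rank-one fiber, invoke equi-approximability plus the approximate unit $\{\chi_{L'}\otimes q\}$ to get a uniform off-diagonal bound, and contradict the rank-one norm identity. The one genuine difference is how $V$ is handled: you expand $v_i^F$ in an orthonormal basis of $V$ and pigeonhole on the basis index, paying a factor $1/\dim V$ in the constant (harmless, since only positivity of the lower bound matters), whereas the paper perturbs the witnesses $v_1^F,v_2^F$ to lie in a fixed finite subset of the unit sphere of $V$ (compactness) and then has \cite[Lemma 4.10]{BF21} also arrange $v_i^F=v_i^{\varphi(F)}$, so the fiber data is frozen without degrading $\delta$. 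Both reductions are legitimate; yours makes the fiber operator $\theta$ a single fixed rank-one operator, the paper's keeps the original witnesses up to an arbitrarily small loss.

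One correction on what you call the main obstacle: you do not need a stable equi-approximability lemma with ``simultaneous finite-rank fiber control.'' The separation estimate $\|(\chi_B\otimes\Id_\H)\,\Phi(e_{x_1^Fx_2^F}\otimes\theta)\,(\chi_C\otimes\Id_\H)\|<\varepsilon$ for $d_Y(B,C)\geq r$ only uses that the approximant has propagation at most $r$; whether its entries lie in $\B(q\H)$ is irrelevant there. The finite-rank projection $q$ enters solely through the approximate unit $\{\chi_{L'}\otimes q\}$ of $I_Y\otimes\K(\H)$, which approximately absorbs $\Phi(\chi_L\otimes p_{\xi_{j_i}})$, combined with the trivial inequality $\|(\chi_{A}\otimes q)\,T\,(\chi_{B}\otimes q)\|\leq\|(\chi_{A}\otimes\Id_\H)\,T\,(\chi_{B}\otimes\Id_\H)\|$. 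So the plain analogue of Lemma \ref{lem:equi-app} for operators on $\ell^2(Y;\H)$ (approximants of finite propagation, uniform $r$ over all subsums) suffices; this is exactly the form of the machinery in \cite{BBFKVW22} that the paper implicitly relies on when it says the remainder is ``similar to Lemma \ref{lem:ctrl sets}.'' With that weakening, your argument goes through as written.
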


The proof for Lemma \ref{lem:stable ctrl sets} is similar to that for Lemma \ref{lem:ctrl sets} with minor modifications, and hence we only provide a sketch to explain the difference.

\begin{proof}[Sketch of proof for Lemma \ref{lem:stable ctrl sets}]
Suppose otherwise. Then there exist $E\in\I_{X}, \delta>0$ and a finite dimensional subspace $V \subseteq \H$ such that $F(E,\delta,V) \notin \I_{Y}$. Therefore for any $F\in\I_{Y}$, we can find $(x_{1}^{F},x_{2}^{F})\in E, (y_{1}^{F},y_{2}^{F})\in Y\times Y$, unit vectors $v_1^F, v_2^F \in V$ and unit vectors $w_1^F, w_2^F \in \H$ such that
\[
\left|\left\langle U\left(\delta_{x^F_{i}}\otimes v^F_i\right),\delta_{y^F_{i}} \otimes w^F_i \right\rangle\right| \geq \delta
\]
for $i=1,2$, while $(y_{1}^{F},y_{2}^{F})\notin F$. Note that $v_1^F, v_2^F$ are unit vectors in $V$ and $V$ is finite dimensional. Hence after a small perturbation, we can assume that the set $\left\{v_1^F, v_2^F\right\}_{F \in \I_Y}$ is finite. Ordering $\I_Y$ by inclusion, it follows from \cite[Lemma 4.10]{BF21} that there exist cofinal
$I\subset \I_{Y}$, $J\subset\I_{Y}$ and a map $\varphi: I\rightarrow J$ such that
\begin{enumerate}
\item $x_{1}^{F}\neq x_{1}^{F'}$ and $x_{2}^{F}\neq x_{2}^{F'}$ for
  all $F\neq F'\in J$;
\item $x_{1}^{F} = x_{1}^{\varphi(F)}$, $x_{2}^{F} = x_{2}^{\varphi(F)}$, $v_{1}^{F} = v_{1}^{\varphi(F)}$ and $v_{2}^{F} = v_{2}^{\varphi(F)}$ for all $F\in I$.
\end{enumerate}
The rest is similar to that of Lemma \ref{lem:ctrl sets}, and hence we omit the details.
\end{proof}

Now we use Lemma \ref{lem:stable ctrl sets} to prove Proposition \ref{prop:stable rigid isom}.

\begin{proof}[Proof of Proposition \ref{prop:stable rigid isom}]
As remarked above, there exists a unitary $U: \ell^2(X) \otimes \H \to \ell^2(Y) \otimes \H$ such that $\Phi(a) = U a U^*$ for any $a\in I_X \otimes \K(\H)$. Fix a unit vector $\xi \in \H$. Direct calculations show that conditions in Proposition \ref{prop:stable isom implies rigid isom} can be translated as follows:
\begin{itemize}
 \item for any $L \in \L_X$, there exist $\delta_L>0$, $f_L: L \to Y$ and a finite-rank projection $p_L$ on $\H$ such that for any $x\in L$, there exists a unit vector $w_x \in p_L \H$ satisfying:
 \[
 |\langle U(\delta_x \otimes \xi), \delta_{f_L(x)} \otimes w_x \rangle| \geq \delta_L;
 \]
 \item for any $L' \in \L_Y$, there exist $\delta_{L'}>0$, $g_{L'}: L' \to X$ and a finite-rank projection $p_{L'}$ on $\H$ such that for any $y\in L'$, there exists a unit vector $v_y \in p_{L'} \H$ satisfying:
 \[
 |\langle U^*(\delta_y \otimes \xi), \delta_{g_{L'}(y)} \otimes v_y \rangle| \geq \delta_{L'}.
 \]
\end{itemize}
We aim to show that the families $\F:=\{f_L ~|~ L \in \L_X\}$ and $\G:=\{g_{L'} ~|~ L' \in \L_Y\}$ provide a coarse equivalence between $(X, \I_X)$ and $(Y, \I_Y)$, and hence conclude the proof thanks to Corollary \ref{cor:coarse equiv general case}.

Firstly, we show that the families $\F$ and $\G$ are coarse. For $L_{1},L_{2}\in \L_{X}$, consider the set
$\{(f_{L_{1}}(x),f_{L_{2}}(x))\;|\;x\in L_{1}\cap L_{2}\}$, which is contained in $F(\Delta_{L_{1}\cap L_{2}},\delta_{L_{1}}\wedge \delta_{L_{2}}, \CC \xi)$. By Lemma \ref{lem:stable ctrl sets}, the set $F(\Delta_{L_{1}\cap L_{2}},\delta_{L_{1}}\wedge \delta_{L_{2}}, \CC \xi)$ belongs to $\I_{Y}$. Hence $\F$ is admissible.

Given $E\in\I_{X}$, we can assume that $E \subseteq L \times L$ for some $L \in \L_X$ thanks to Proposition \ref{prop:ideals in space}. Then for any $(x_1,x_2) \in E$, it follows that $(f_L(x_1), f_L(x_2)) \in F(E,\delta_L, \CC \xi)$. Hence we have $\I_{X}\subseteq\F^{*}(\I_{Y})$, which concludes that the family $\F$ is coarse. Similarly, we obtain that the family $\G$ is also coarse.

It remains to show that (\ref{EQ:closeness 1}) and (\ref{EQ:closeness 2}) holds for $\F$ and $\G$. Fix $L'\in\L_{Y}$ and denote $L'':=g_{L'}(L')$. By the choice of $g_{L'}$ and $f_{L''}$, for any $y\in L'$ there exists $v_y \in p_{L'} \H$ such that
\[
  |\langle U (\delta_{g_{L'}(y)} \otimes v_y),\delta_{y}\otimes \xi \rangle| \geq \delta_{L'}
\]
and there exists $w_{g_{L'}(y)} \in p_{g_{L'}(y)}\H$ such that
\[
|\langle U (\delta_{g_{L'}(y)} \otimes \xi), \delta_{f_{L''}(g_{L'}(y))} \otimes w_{g_{L'}(y)} \rangle|\geq\delta_{L''}.
\]
This implies that
\[
\{(y,f_{L''}(g_{L'}(y)))\;|\; y\in L'\}\subseteq F(\Delta_{L''},\delta_{L'}\wedge\delta_{L''}, \CC \xi + p_{L'}\H) \in \I_Y,
\]
which concludes (\ref{EQ:closeness 2}). Similarly, we obtain (\ref{EQ:closeness 1}) and finish the proof.
\end{proof}

Combining Proposition \ref{prop:stable isom implies rigid isom} and Proposition \ref{prop:stable rigid isom}, we conclude the proof of Theorem \ref{thm:rigidity for stable geom ideals}.

Finally, we study the $\sigma$-unitalness of geometric ideals and prove the last sentence in Theorem \ref{introthm}. Recall from \cite[Definition 7.15]{WZ23} that an ideal $\L$ in a metric space $(X,d)$ is \emph{countably generated} if there exists a countable subset $\S \subseteq \L$ such that $\L$ is generated by $\S$.

\begin{lem}\label{lem:sigma unital}
Let $(X,d)$ be a discrete metric space of bounded geometry and $\L$ be a countably generated ideal in $(X,d)$. Then the geometric ideal $C^*_u(X,\I(\L))$ is $\sigma$-unital.
\end{lem}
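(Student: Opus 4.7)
The plan is to build a countable approximate unit by extracting a cofinal subsequence from the net $\{\chi_L : L \in \L\}$, which was shown to be an approximate unit for $C^*_u(X, \I(\L))$ in Lemma \ref{lem:app unit}. Thus the whole task reduces to exhibiting a countable cofinal family in $(\L, \subseteq)$ under the hypothesis of countable generation.

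First I would fix a countable generating set $\S = \{S_n\}_{n \in \NN}$ for $\L$. Unpacking Definition \ref{defn:ideals in space}, every $L \in \L$ is contained in some finite union of the form $\Nd_{r_1}(S_{n_1}) \cup \cdots \cup \Nd_{r_k}(S_{n_k})$ with $n_i \in \NN$ and $r_i \geq 0$. Since $\Nd_r(S) \subseteq \Nd_{r'}(S)$ whenever $r \leq r'$, I may restrict the $r_i$ to non-negative integers without loss of generality. The collection of all such unions is then countable. Enumerate them as $(T_m)_{m \in \NN}$ and set $L_m := T_1 \cup \cdots \cup T_m$, which lies in $\L$ by axiom (3) of Definition \ref{defn:ideals in space}. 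This produces an increasing sequence $(L_m)_{m \in \NN}$ in $\L$ such that every $L \in \L$ is contained in some $L_m$.

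Next I would verify that $\{\chi_{L_m}\}_{m \in \NN}$ is an approximate unit for $C^*_u(X, \I(\L))$. Given $a$ in this ideal and $\varepsilon > 0$, Lemma \ref{lem:app unit} provides $L \in \L$ with $\|a - \chi_L a\| < \varepsilon$ and $\|a - a\chi_L\| < \varepsilon$. Choose $m$ with $L \subseteq L_m$, so that $\chi_L \chi_{L_m} = \chi_L$ and hence $(1 - \chi_{L_m})(1 - \chi_L) = 1 - \chi_{L_m}$. Therefore
\[
\|a - \chi_{L_m} a\| = \|(1 - \chi_{L_m})(1 - \chi_L) a\| \leq \|a - \chi_L a\| < \varepsilon,
\]
and the symmetric estimate $\|a - a\chi_{L_m}\| < \varepsilon$ follows in the same way.

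The argument is essentially a bookkeeping exercise once the cofinal sequence is in hand; the only genuine input is translating the countable generation hypothesis into countability of the cofinal family in $(\L, \subseteq)$, and I do not anticipate a substantive obstacle beyond this.
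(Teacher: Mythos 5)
Your proof is correct and takes essentially the same approach as the paper: extract a countable, increasing, cofinal family $\{L_m\}$ in $\L$ and check that $\{\chi_{L_m}\}$ is a (sequential) approximate unit via Lemma \ref{lem:app unit}. The only difference is that the paper obtains the countable cofinal family by invoking \cite[Lemma 7.17]{WZ23}, whereas you derive it directly from the definition of the ideal generated by a countable set; both arguments are sound.
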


\begin{proof}
Since $\L$ is countably generated, it follows from \cite[Lemma 7.17]{WZ23} that there exists a countable subset $\{Y_1, Y_2, \cdots, Y_n, \cdots\}$ of $\L$ such that $\L=\{Z \subseteq X ~|~ \exists~ n\in \NN \text{ such that }Z \subseteq Y_n\}$. It is then easy to see that $\{\chi_{Y_n} ~|~ n\in \NN\}$ is an approximate unit for $C^*_u(X,\I(\L))$, \emph{i.e.}, $C^*_u(X,\I(\L))$ is $\sigma$-unital.
\end{proof}

Combining Proposition \ref{prop:ce induces me}, Theorem \ref{thm:rigidity for stable geom ideals} and Lemma \ref{lem:sigma unital}, we finally conclude the proof for Theorem \ref{introthm}.


\section{Discussions on ghostly ideals}\label{sec:ghostly ideals}

In \cite{WZ23}, Wang and the second-named author introduced a notion of ghostly ideals in uniform Roe algebras. Here we provide some discussions on the rigidity problem for ghostly ideals and pose some open questions. Recall the following:

\begin{defn}[\cite{WZ23}]\label{defn:ghostly ideals}
Let $(X,d)$ be a discrete metric space of bounded geometry, and $\L$ be an ideal in $(X,d)$. The \emph{associated ghostly ideal of $\L$}, denoted by $\tilde{I}(\L)$, is defined as follows:
\[
\tilde{I}(\L):=\{T \in C^*_u(X) ~|~ \forall~\varepsilon>0, \r(\supp_\varepsilon(T))  \in \L\}.
\]
\end{defn}

Similar to the situation for geometric ideals, we would also like to study isomorphisms between ghostly ideals. More precisely, similar to the discussions in Section \ref{sec:morita for geometric} to Section \ref{sec:stable}, we pose the following questions:

\begin{question}\label{Q}
Let $(X,d_X), (Y,d_Y)$ be discrete metric spaces of bounded geometry, and $\L_X, \L_Y$ be ideals in $(X,d_X)$ and $(Y,d_Y)$, respectively.
\begin{enumerate}
 \item If $(X, \I(\L_X))$ and $(Y, \I(\L_X))$ are coarsely equivalent (in the sense of Definition \ref{introdefn:ce}), are $\tilde{I}(\L_X)$ and $\tilde{I}(\L_Y)$ Morita equivalent?
 \item Conversely, if $\tilde{I}(\L_X)$ and $\tilde{I}(\L_Y)$ are isomorphic (or stably isomorphic), are $(X, \I(\L_X))$ and $(Y, \I(\L_X))$ coarsely equivalent?
\end{enumerate}
\end{question}

Unfortunately, currently we are unable to completely answer either of these questions. We merely manage to provide a partial answer to Question \ref{Q}(1). To state our result, we need some preparations.

Let $f:(X,d_X) \to (Y,d_Y)$ be a coarse equivalence with a coarse inverse $g: Y \to X$, and $\L$ be an ideal in $(X,d_X)$. Denote $f_\ast(\L)$ the ideal in $(Y,d_Y)$ generated by the set $\{f(L) ~|~ L \in \L\}$. Then we have:

\begin{lem}\label{lem:induced by f}
With the same notation as above, we have $g_\ast f_\ast (\L) = \L$. Hence in this case, the maps $f_\ast$ and $g_\ast$ provide a one-to-one correspondence between ideals in $(X,d_X)$ and those in $(Y,d_Y)$.
\end{lem}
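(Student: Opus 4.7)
The plan is to verify the two inclusions $g_\ast f_\ast(\L) \subseteq \L$ and $\L \subseteq g_\ast f_\ast(\L)$ by tracing generators through the composition, and then invoke symmetry between $f$ and $g$ to conclude the bijection statement. Two ingredients from coarse geometry will suffice. First, since $g: Y \to X$ is a coarse map between metric spaces, it admits a modulus: for every $r \geq 0$ there exists $s \geq 0$ such that $d_Y(y_1,y_2) \leq r$ implies $d_X(g(y_1),g(y_2)) \leq s$, so $g(\Nd_r(S)) \subseteq \Nd_s(g(S))$ for every $S \subseteq Y$. Second, closeness of $g \circ f$ to $\Id_X$ yields a constant $t \geq 0$ with $d_X(g(f(x)), x) \leq t$ for all $x \in X$, giving both $g(f(L)) \subseteq \Nd_t(L)$ and $L \subseteq \Nd_t(g(f(L)))$ for every $L \in \L$.

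For the inclusion $g_\ast f_\ast(\L) \subseteq \L$, I will first reduce a generic element of $f_\ast(\L)$ to a subset of some $\Nd_r(f(L))$ with $L \in \L$, using that $\L$ itself is closed under unions and neighborhoods (so $\bigcup_i \Nd_{r_i}(f(L_i)) \subseteq \Nd_{\max_i r_i}(f(\bigcup_i L_i))$); an analogous reduction applies inside $g_\ast f_\ast(\L)$. Then by the two facts above, a typical generator is contained in $\Nd_{s'}(g(\Nd_r(f(L)))) \subseteq \Nd_{s + s' + t}(L)$, and the axioms of Definition \ref{defn:ideals in space} place this inside $\L$.

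For the reverse inclusion, $f(L) \in f_\ast(\L)$ holds by definition, so $g(f(L)) \in g_\ast f_\ast(\L)$; the closeness bound $L \subseteq \Nd_t(g(f(L)))$ and the neighborhood plus subset axioms of Definition \ref{defn:ideals in space} then give $L \in g_\ast f_\ast(\L)$. The symmetric argument, swapping the roles of $f$ and $g$, shows $f_\ast g_\ast(\L') = \L'$ for every ideal $\L'$ in $(Y,d_Y)$, so $f_\ast$ and $g_\ast$ are mutually inverse bijections between the two lattices of ideals. No genuine obstruction arises in this proof; the only care required is clean bookkeeping of the coarseness moduli for $f, g$ and the closeness constants for $g \circ f$ and $f \circ g$, which accumulate additively but never break the argument.
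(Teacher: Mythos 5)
Your proof is correct and takes essentially the same route as the paper's: both directions are handled via the closeness constant for $g\circ f$ and the expansion modulus of the coarse map $g$, with the ideal axioms of Definition \ref{defn:ideals in space} absorbing the resulting neighbourhoods, and the bijection following by symmetry. The only difference is that where the paper cites \cite[Lemma 7.18]{WZ23} for the fact that every element of $f_\ast(\L)$ lies in some $\Nd_R(f(L))$ with $L\in\L$, you reprove this reduction inline, which is a harmless (and correct) elaboration rather than a different approach.
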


\begin{proof}
Firstly, note that $gf(L) \in \L$ for any $L\in \L$, which implies that $\L \subseteq g_\ast f_\ast (\L)$. Conversely, it suffices to show that $g(L') \in \L$ for any $L' \in f_\ast(\L)$. Given $L' \in f_\ast(\L)$, It follows from \cite[Lemma 7.18]{WZ23} that there exist $R>0$ and $L \in \L$ such that $L' \subseteq \Nd_R(f(L))$. Hence $g(L') \subseteq g(\Nd_R(f(L)))$, which belongs to $\L$ since $g$ is coarse. So we conclude the proof.
\end{proof}

As a direct corollary, we obtain the following:
\begin{cor}\label{cor:restricted}
Let $f:(X,d_X) \to (Y,d_Y)$ be a coarse equivalence and $\L$ be an ideal in $(X,d_X)$. Then the family $\F(f):=\{f|_L: L \to Y ~|~ L \in \L\}$ is a coarse equivalence (in the sense of Definition \ref{defn:coarse equiv using families}) between $(X, \I(\L))$ and $(Y, \I(f_\ast(\L)))$.
\end{cor}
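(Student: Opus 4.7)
My plan is to verify the ingredients of Definition \ref{defn:coarse equiv using families} directly, taking as the candidate coarse inverse the family $\G(g):=\{g|_{L'}:L'\to X ~|~ L'\in f_\ast(\L)\}$, where $g:(Y,d_Y)\to(X,d_X)$ is any metric coarse inverse of $f$. The crucial bookkeeping identity $g_\ast f_\ast(\L)=\L$ is furnished by Lemma \ref{lem:induced by f}, which in fact provides a bijection between ideals in $(X,d_X)$ and $(Y,d_Y)$; this is what keeps both families landing in the correct ideals. Recall also that $\L(\I(\L))=\L$ and $\L(\I(f_\ast(\L)))=f_\ast(\L)$ by Proposition \ref{prop:ideals in space}, so the indexing collections match those prescribed by Definition \ref{defn:coarse family}.

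First I would verify admissibility and coarseness of $\F(f)$. For admissibility, given $L_1,L_2\in\L$, the set $\{(f(x),f(x)) ~|~ x\in L_1\cap L_2\}$ is the diagonal $\Delta_{f(L_1\cap L_2)}$, which lies in $\I(f_\ast(\L))$ by Lemma \ref{lem:Delta_L belongs to E} because $f(L_1\cap L_2)\in f_\ast(\L)$. For the coarseness requirement $\I(\L)\subseteq\F(f)^*\I(f_\ast(\L))$, I would take $E\in\I(\L)$, find $L\in\L$ with $E\subseteq L\times L$ via Proposition \ref{prop:ideals in space}, and observe that $(f|_L\times f|_L)(E)\subseteq f(L)\times f(L)$ has bounded propagation in $d_Y$ because $f$ is a metric coarse map, so it lies in $\I(f_\ast(\L))$. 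Running the same argument with the roles of $f$ and $g$ swapped shows that $\G(g)$ is also a coarse family.

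The remaining task is to check the closeness conditions (\ref{EQ:closeness 1}) and (\ref{EQ:closeness 2}). For the first, fix $L\in\L$; the set $\{(g(f(x)),x) ~|~ x\in L\}$ has $d_X$-propagation bounded by some $R>0$ because $g\circ f$ is close to $\Id_X$ in the unital sense. I would set $L':=L\cup g(f(L))$; the key observation is that $g(f(L))\in\L$ since $f(L)\in f_\ast(\L)$ and $g_\ast f_\ast(\L)=\L$, so $L'\in\L$, the set sits in $L'\times L'$ with propagation at most $R$, and therefore lies in $\I(\L)$. The verification of (\ref{EQ:closeness 2}) is entirely symmetric, using $f\circ g$ close to $\Id_Y$ together with the dual conclusion $f(g(L'))\in f_\ast(\L)$ obtained by swapping roles.

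I do not anticipate a serious obstacle; the whole argument is a routine unpacking once Lemma \ref{lem:induced by f} is in hand. The only point worth flagging is the need to enlarge $L$ to $L\cup g(f(L))$ before checking the closeness condition, since $g(f(L))$ need not sit inside $L$ itself; this enlargement is safe precisely because the pushforward maps $f_\ast,g_\ast$ preserve the respective ideals.
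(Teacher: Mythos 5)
Your proposal is correct and follows exactly the route the paper intends: the corollary is stated as a direct consequence of Lemma \ref{lem:induced by f}, and your argument is just the routine unpacking of Definition \ref{defn:coarse equiv using families} — restricting a metric coarse inverse $g$ to the sets of $f_\ast(\L)$, using $g_\ast f_\ast(\L)=\L$ to keep all images and enlargements inside the correct ideals, and using the metric coarseness of $f$ and $g$ for bounded propagation. Nothing is missing; the enlargement $L\cup g(f(L))$ you flag is indeed the only point requiring the lemma.
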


Now we present our partial answer to Question \ref{Q}(1):

\begin{prop}\label{prop:Morita for ghost}
Let $(X,d_X), (Y,d_Y)$ be discrete metric spaces of bounded geometry, and $\L_X, \L_Y$ be ideals in $(X,d_X)$ and $(Y,d_Y)$, respectively. If $f: (X,d_X) \to (Y,d_Y)$ is a coarse equivalence with $f_\ast(\L_X) = \L_Y$, then $\tilde{I}(\L_X)$ and $\tilde{I}(\L_Y)$ are Morita equivalent.
\end{prop}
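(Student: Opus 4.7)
The strategy is to realize both ghostly ideals as full corners inside a single ghostly ideal on the disjoint union $X \sqcup Y$, and then invoke general $C^*$-algebraic Morita theory. Let $g: Y \to X$ be a coarse inverse to $f$. By Corollary \ref{cor:restricted} combined with Corollary \ref{cor:coarse equiv general case 1}, the restricted families $\{f|_L\}_{L \in \L_X}$ and $\{g|_{L'}\}_{L' \in \L_Y}$ induce a coarse equivalence $\E$ on $X \sqcup Y$ whose restrictions to $X \times X$ and $Y \times Y$ are $\I(\L_X)$ and $\I(\L_Y)$ respectively, and which has bounded geometry by Lemma \ref{lem:bdd geom}. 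Let $\tilde{\L}$ be the ideal in $X \sqcup Y$ generated by $\L_X \cup \L_Y$, and define the ghostly ideal $\tilde{I}(\tilde{\L}) \subseteq C^*_u(X \sqcup Y, \E)$ by applying Definition \ref{defn:ghostly ideals} to the coarse space $(X \sqcup Y, \E)$ and the ideal $\tilde{\L}$.

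A direct inspection of supports and $\varepsilon$-supports shows that $\chi_X$ and $\chi_Y$ are projections in the multiplier algebra of $\tilde{I}(\tilde{\L})$ whose corners satisfy
\[
\chi_X \, \tilde{I}(\tilde{\L}) \, \chi_X = \tilde{I}(\L_X) \qquad \text{and} \qquad \chi_Y \, \tilde{I}(\tilde{\L}) \, \chi_Y = \tilde{I}(\L_Y),
\]
where the hypothesis $f_\ast(\L_X) = \L_Y$ is used to identify the off-diagonal entourages in $\E$ with ``mixed'' elements of $\tilde{\L}$. Once both $\chi_X$ and $\chi_Y$ are shown to be \emph{full} projections in $\tilde{I}(\tilde{\L})$, meaning
$\overline{\tilde{I}(\tilde{\L})\, \chi_X\, \tilde{I}(\tilde{\L})} = \tilde{I}(\tilde{\L}) = \overline{\tilde{I}(\tilde{\L})\, \chi_Y\, \tilde{I}(\tilde{\L})}$, a standard result in Morita theory (see, \textit{e.g.}, \cite{MR3618901}) yields that both corners are Morita equivalent to $\tilde{I}(\tilde{\L})$, hence to one another.

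To establish fullness of $\chi_X$, I would follow the pattern of Proposition \ref{prop:ce induces me}: given an element $T \in \tilde{I}(\tilde{\L})$ supported on an off-diagonal entourage $F \in \E|_{Y \times X}$, apply Lemma \ref{lem:elementary facts for coarse corr} and \cite[Lemma 4.10]{Roe03} to decompose $F$ into finitely many graphs $\Gr(\phi_i)$ of injective partial maps $\phi_i: D_i \to Y$ with disjoint $D_i$, then express $T$ as a finite sum $\sum_i (T \chi_{\Gr(\phi_i)}^*)\, \chi_X\, \chi_{\Gr(\phi_i)}$ factoring through $\chi_X$; a symmetric argument handles $\chi_Y$. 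The main technical obstacle is verifying that the intermediate pieces $\chi_{\Gr(\phi_i)}$ and $T \chi_{\Gr(\phi_i)}^*$ actually lie in $\tilde{I}(\tilde{\L})$ rather than merely in $C^*_u(X \sqcup Y, \E)$: to approximate a general $T \in \tilde{I}(\tilde{\L})$ one must first truncate $T$ by cutting down to a genuine entourage in $\E$ while controlling the $\varepsilon$-support condition. This is precisely where $f_\ast(\L_X) = \L_Y$ becomes essential, since it guarantees that each image $\phi_i(D_i) \subseteq Y$ remains in $\L_Y$, so that the supports of all the intermediate operators project into elements of $\tilde{\L}$ and the ghost conditions propagate correctly through the decomposition.
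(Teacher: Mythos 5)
Your linking--algebra strategy has a genuine gap at its central step, the corner identification. You build the ambient algebra from the \emph{non-unital} coarse equivalence $\E$ supplied by Corollary \ref{cor:restricted}, whose diagonal restrictions are $\I(\L_X)$ and $\I(\L_Y)$. Consequently every element of $C^*_u(X\sqcup Y,\E)$, hence every element of your $\tilde{I}(\tilde{\L})$, is a norm limit of operators supported in $\E$, and cutting by $\chi_X$ lands in $C^*_u(X,\I(\L_X))$, i.e.\ in the \emph{geometric} ideal $I(\L_X)$. Moreover, for such operators the condition $\r(\supp_\varepsilon(T))\in\L_X$ is automatic (approximate $T$ within $\varepsilon/2$ by an operator supported in $\I(\L_X)$), so your corner $\chi_X\,\tilde{I}(\tilde{\L})\,\chi_X$ is the geometric ideal $C^*_u(X,\I(\L_X))$, not the ghostly ideal $\tilde{I}(\L_X)$. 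These differ in general --- e.g.\ for $\L_X$ the ideal of finite subsets of an expander-like space, $C^*_u(X,\I(\L_X))=\K(\ell^2(X))$ while $\tilde{I}(\L_X)$ contains non-compact ghosts --- so as written your argument would only recover the Morita equivalence of the geometric ideals (essentially Proposition \ref{prop:ce induces me}), not the statement about $\tilde{I}(\L_X)$ and $\tilde{I}(\L_Y)$. A second, smaller defect: the paper defines ghostly ideals only for metric spaces, so defining $\tilde{I}(\tilde{\L})$ over a non-unital coarse space and checking it is an ideal is an extra step you do not address; and in your fullness factorization $T=\sum_i (T\chi_{\Gr(\phi_i)}^*)\chi_X\chi_{\Gr(\phi_i)}$ the right-hand factors $\chi_{\Gr(\phi_i)}$ are multipliers, not elements of $\tilde{I}(\tilde{\L})$, so this exhibits $T$ in $\tilde{I}\chi_X M(\tilde{I})$ rather than in $\overline{\tilde{I}\chi_X\tilde{I}}$; one must insert an approximate unit to repair it.

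The strategy is salvageable, but only by changing the ambient structure to the \emph{unital} coarse correspondence $\E(f)$ attached to the global coarse equivalence $f:(X,d_X)\to (Y,d_Y)$, so that the diagonal corners are the full algebras $C^*_u(X)$ and $C^*_u(Y)$ and the corners of a suitably defined $\tilde{I}(\tilde{\L})$ really are $\tilde{I}(\L_X)$ and $\tilde{I}(\L_Y)$ (here $f_\ast(\L_X)=\L_Y$ is needed to see that $\tilde{\L}$ restricts correctly on each side). But then the substantive work becomes exactly what the paper's proof carries out: one must show that conjugating $\tilde{I}(\L_Y)$ by operators supported in $X\times Y$ inside $\E(f)$ lands back in $\tilde{I}(\L_X)$ --- this is the $\varepsilon$-support estimate $\M(f)\tilde{I}_Y\M(f)^*\subseteq \tilde{I}_X$, where $f_\ast(\L_X)=\L_Y$ and bounded geometry enter quantitatively --- and one must prove fullness via the decomposition of $\Gr(g)$ into graphs of injections. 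You gesture at both points (``the ghost conditions propagate correctly'') but do not carry them out, and they are the technical heart of the proposition; the paper packages them directly as an imprimitivity bimodule $\M=\tilde{I}_X\M(f)\tilde{I}_Y$ rather than as full corners of a linking ideal.
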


\begin{rem}
By Corollary \ref{cor:restricted}, the assumption in Proposition \ref{prop:Morita for ghost} implies that $(X, \I(\L_X))$ and $(Y, \I(\L_Y))$ are coarsely equivalent, which is the assumption considered in Question \ref{Q}(1). However, it is unclear to us whether the converse holds or not.
\end{rem}

\begin{proof}[Proof of Proposition \ref{prop:Morita for ghost}]
Let $f:(X,d_X) \to (Y,d_Y)$ be a coarse equivalence and $\E(f)$ the associated coarse correspondence. Similar to the proof of Proposition \ref{prop:ce induces me}, we define $\mathcal{M}(f)$ to be the norm closure of
$$\{T \in \CC_u[X \sqcup Y, \E(f)] ~|~ \supp(T) \subseteq X \times Y\}$$
in $\B(\ell^2(X \sqcup Y))$, which is a left $C^*_u(X)$-module as well as a right $C^*_u(Y)$-module under composition of operators. Writing $\tilde{I}_X:=\tilde{I}(\L_X)$ and $\tilde{I}_Y:=\tilde{I}(\L_Y)$, we set $\M:=\tilde{I}_X\M(f)\tilde{I}_Y$. Also define the inner products
\[
_{\tilde{I}_X} \langle \cdot, \cdot \rangle : \M \times \M \longrightarrow \tilde{I}_X \quad \text{by} \quad _{\tilde{I}_X} \langle T, S \rangle:= TS^* \quad  \text{for} \quad T,S \in \M,
\]
and
\[
 \langle \cdot, \cdot \rangle_{\tilde{I}_Y} : \M \times \M \longrightarrow \tilde{I}_Y \quad \text{by} \quad \langle T, S \rangle_{\tilde{I}_Y}:= T^*S \quad  \text{for} \quad T,S \in \M.
\]
It is straightforward to check that $\M$ is a right Hilbert $\tilde{I}_Y$-module as well as a left Hilbert $\tilde{I}_X$-module with commuting actions of $\tilde{I}_X$ and $\tilde{I}_Y$ on $\M$. We will show that $\M$ provides a Morita equivalence between $\tilde{I}_X$ and $\tilde{I}_Y$.

Firstly, we claim that $\M(f)\tilde{I}_Y \M(f)^* \subseteq \tilde{I}_X$ where $\M(f)^*:=\{T^* ~|~ T \in \M(f)\}$. Given $T_1, T_2 \in \CC_u[X \sqcup Y, \E(f)]$ with support in $X \times Y$ and $S \in \tilde{I}_Y$, there exists $R>0$ such that
\[
\supp(T_i) \subseteq \{(x,y) \in X \times Y ~|~ d_Y(f(x),y) \leq R\} \quad \text{for} \quad i=1,2.
\]
Hence for $x_1, x_2 \in X$, we have
\[
(T_1 S T^*_2)(x_1,x_2)= \sum_{\substack{y_1\in B_Y(f(x_1),R)\\y_2 \in B_Y(f(x_2,R))}} T_1(x_1,y_1) S(y_1,y_2) T_2^*(y_2,x_2).
\]
Denote $N_R:=\sup_{y\in Y} |B_Y(y,R)| < \infty$. Then for any $\varepsilon>0$ and $(x_1, x_2) \in \supp_\varepsilon(T_1 S T^*_2)$, there exist $y_1 \in B_Y(f(x_1),R)$ and $y_2 \in B_Y(f(x_2),R)$ such that
\[
(y_1,y_2) \in \supp_{\varepsilon'}(S) \quad \text{for}\quad \varepsilon':=\frac{\varepsilon}{\|T_1\| \cdot \|T_2\| \cdot N_R^2}.
\]
Since $S \in \tilde{I}_Y$, there exists $L \in \L_Y$ such that $\r(\supp_{\varepsilon'}(S)) \subseteq L$. Hence we obtain
\[
f\big(\r(\supp_\varepsilon(T_1 S T^*_2))\big) \subseteq \Nd_R\big(\r(\supp_{\varepsilon'}(S))\big) \subseteq \Nd_R(L) \in \L_Y.
\]
Therefore by Lemma \ref{lem:induced by f} and the assumption that $f_\ast(\L_X) = \L_Y$, we obtain that
\[
\r(\supp_\varepsilon(T_1 S T^*_2)) \in \L_X,
\]
which implies that $T_1 S T^*_2 \in \tilde{I}_X$ and concludes the claim.

Now we claim that $\tilde{I}_X\M(f) = \M(f)\tilde{I}_Y$. Taking a coarse inverse
$g: Y \to X$ to $f$, we decompose $\Gr(g)$ into a finite union of $\Gr(g_i)$ for $i=1,\cdots,n$ such that each $g_i: D_i \to X$
is an injective map with $\{D_i\}_{i=1}^n$ mutually disjoint. For each $i=1,\cdots, n$, define $T'_{i}:=\chi_{\Gr(g_{i})}$, which is regarded as a bounded operator
from $\ell^2(Y)$ to $\ell^2(X)$. Clearly, we have $T'_{i} \in \M(f)$,
$(T'_{i})^*T'_{i} = \Id_{\ell^2(D_{i})}$ and $\sum_{i=1}^n (T'_{i})^*T'_{i}= \Id_{\ell^2(Y)}$. Hence for $T \in \M(f)$ and $S \in \tilde{I}_Y$, we have
\[
TS = TS\sum_{i=1}^n (T'_{i})^*T'_{i} \in \big(\M(f)\tilde{I}_Y \M(f)^*\big) \cdot \M(f) \subseteq \tilde{I}_X \M(f),
\]
where the last containment follows from the previous paragraph. By symmetry, we conclude $\tilde{I}_X\M(f) = \M(f)\tilde{I}_Y$. Consequently, we obtain $\M = \tilde{I}_X\M(f) = \M(f)\tilde{I}_Y$.

From the proof of Proposition \ref{prop:ce induces me} together with Corollary \ref{cor:restricted}, we know that $ \langle \M(f), \M(f) \rangle_{C^*_u(Y)} = C^*_u(Y)$. Hence we have
\[
 \langle \M, \M \rangle_{\tilde{I}_Y} =  \langle \M(f)\tilde{I}_Y, \M(f)\tilde{I}_Y \rangle_{\tilde{I}_Y} = \tilde{I}_Y \cdot  \langle \M(f), \M(f) \rangle_{C^*_u(Y)} \cdot \tilde{I}_Y= \tilde{I}_Y.
\]
Hence the inner $\tilde{I}_Y$-product on $\M$ is full. Similarly, we obtain that the inner $\tilde{I}_X$-product on $\M$ is full. Therefore, $\M$ is a $\tilde{I}_X$-$\tilde{I}_Y$ imprimitivity bimodule, which concludes that $\tilde{I}_X$ and $\tilde{I}_Y$ are Morita equivalent.
\end{proof}

Finally, we remark that it seems that the proof of Proposition \ref{prop:Morita for ghost} does not work if we only know that $(X,\I(\L_X))$ and $(Y,\I(\L_Y))$ are coarsely equivalent instead of requiring a \emph{global} coarse equivalence from $(X,d_X)$ to $(Y,d_Y)$.

\bibliographystyle{plain}
\bibliography{bib_ideals}

\begin{thebibliography}{10}

\bibitem{BBFKVW22}
Florent~P. Baudier, Bruno~M. Braga, Ilijas Farah, Ana Khukhro, Alessandro
  Vignati, and Rufus Willett.
\newblock Uniform {R}oe algebras of uniformly locally finite metric spaces are
  rigid.
\newblock {\em Invent. Math.}, 230(3):1071--1100, 2022.

\bibitem{BCL20}
Bruno~M. Braga, Yeong~Chyuan Chung, and Kang Li.
\newblock Coarse {B}aum-{C}onnes conjecture and rigidity for {R}oe algebras.
\newblock {\em J. Funct. Anal.}, 279(9):108728, 21, 2020.

\bibitem{BF21}
Bruno~M. Braga and Ilijas Farah.
\newblock On the rigidity of uniform {R}oe algebras over uniformly locally
  finite coarse spaces.
\newblock {\em Trans. Amer. Math. Soc.}, 374(2):1007--1040, 2021.

\bibitem{BNW07}
Jacek Brodzki, Graham~A. Niblo, and Nick~J. Wright.
\newblock Property {A}, partial translation structures, and uniform embeddings
  in groups.
\newblock {\em J. Lond. Math. Soc. (2)}, 76(2):479--497, 2007.

\bibitem{BGR77}
Lawrence~G. Brown, Philip Green, and Marc~A. Rieffel.
\newblock Stable isomorphism and strong {M}orita equivalence of
  {$C\sp*$}-algebras.
\newblock {\em Pacific J. Math.}, 71(2):349--363, 1977.

\bibitem{CW04}
Xiaoman Chen and Qin Wang.
\newblock Ideal structure of uniform {R}oe algebras of coarse spaces.
\newblock {\em J. Funct. Anal.}, 216(1):191 -- 211, 2004.

\bibitem{CW05}
Xiaoman Chen and Qin Wang.
\newblock Ghost ideals in uniform {R}oe algebras of coarse spaces.
\newblock {\em Arch. Math. (Basel)}, 84(6):519--526, 2005.

\bibitem{MR3618901}
Joachim Cuntz, Siegfried Echterhoff, Xin Li, and Guoliang Yu.
\newblock {\em {$K$}-theory for group {$C^*$}-algebras and semigroup
  {$C^*$}-algebras}, volume~47 of {\em Oberwolfach Seminars}.
\newblock Birkh\"{a}user/Springer, Cham, 2017.

\bibitem{EM19}
Eske~Ellen Ewert and Ralf Meyer.
\newblock Coarse geometry and topological phases.
\newblock {\em Comm. Math. Phys.}, 366(3):1069--1098, 2019.

\bibitem{Lan95}
E.~Christopher Lance.
\newblock {\em Hilbert {$C^*$}-modules}, volume 210 of {\em London Mathematical
  Society Lecture Note Series}.
\newblock Cambridge University Press, Cambridge, 1995.

\bibitem{LSZ20}
Kang Li, J\'{a}n \v{S}pakula, and Jiawen Zhang.
\newblock Measured asymptotic expanders and rigidity for {R}oe algebras.
\newblock {\em Int. Math. Res. Not.}, (17):15102--15154, 2023.

\bibitem{MRW87}
Paul~S. Muhly, Jean~N. Renault, and Dana~P. Williams.
\newblock Equivalence and isomorphism for groupoid {$C^\ast$}-algebras.
\newblock {\em J. Operator Theory}, 17(1):3--22, 1987.

\bibitem{NY12}
Piotr~W. Nowak and Guoliang Yu.
\newblock {\em Large scale geometry}.
\newblock EMS Textbooks in Mathematics. European Mathematical Society (EMS),
  Z\"{u}rich, 2012.

\bibitem{Oza00}
Narutaka Ozawa.
\newblock Amenable actions and exactness for discrete groups.
\newblock {\em C. R. Acad. Sci. Paris S\'{e}r. I Math.}, 330(8):691--695, 2000.

\bibitem{Rie82}
Marc~A. Rieffel.
\newblock Morita equivalence for operator algebras.
\newblock In {\em Operator algebras and applications, {P}art 1 ({K}ingston,
  {O}nt., 1980)}, volume~38 of {\em Proc. Sympos. Pure Math.}, pages 285--298.
  Amer. Math. Soc., Providence, RI, 1982.

\bibitem{Roe88}
John Roe.
\newblock An index theorem on open manifolds. {I}, {II}.
\newblock {\em J. Differential Geom.}, 27(1):87--113, 115--136, 1988.

\bibitem{Roe93}
John Roe.
\newblock {\em Coarse cohomology and index theory on complete {R}iemannian
  manifolds}, volume 497.
\newblock Amer. Math. Soc., 1993.

\bibitem{Roe03}
John Roe.
\newblock {\em Lectures on coarse geometry}, volume~31 of {\em University
  Lecture Series}.
\newblock Amer. Math. Soc., Providence, RI, 2003.

\bibitem{STY02}
Georges Skandalis, Jean-Louis Tu, and Guoliang Yu.
\newblock The coarse {B}aum-{C}onnes conjecture and groupoids.
\newblock {\em Topology}, 41(4):807--834, 2002.

\bibitem{Spa09}
J\'{a}n \v{S}pakula.
\newblock Uniform {$K$}-homology theory.
\newblock {\em J. Funct. Anal.}, 257(1):88--121, 2009.

\bibitem{SW13}
J\'{a}n \v{S}pakula and Rufus Willett.
\newblock On rigidity of {R}oe algebras.
\newblock {\em Adv. Math.}, 249:289--310, 2013.

\bibitem{SW17}
J\'{a}n \v{S}pakula and Rufus Willett.
\newblock A metric approach to limit operators.
\newblock {\em Trans. Amer. Math. Soc.}, 369(1):263--308, 2017.

\bibitem{Wan07}
Qin Wang.
\newblock Remarks on ghost projections and ideals in the {R}oe algebras of
  expander sequences.
\newblock {\em Arch. Math. (Basel)}, 89(5):459--465, 2007.

\bibitem{WZ23}
Qin Wang and Jiawen Zhang.
\newblock Ghostly ideals in uniform {R}oe algebras.
\newblock {\em arXiv preprint arXiv:2301.04921}, 2023.

\bibitem{Yu00}
Guoliang Yu.
\newblock The coarse {B}aum-{C}onnes conjecture for spaces which admit a
  uniform embedding into {H}ilbert space.
\newblock {\em Invent. Math.}, 139(1):201--240, 2000.

\end{thebibliography}

\end{document}